\numberwithin{equation}{section}
\newtheorem{theorem}{Theorem}[section]
\newtheorem{proposition}[theorem]{Proposition}
\theoremstyle{definition}
\newtheorem{definition}[theorem]{Definition}
\newtheorem{remark}[theorem]{Remark}
\newtheorem{example}[theorem]{Example}
\newtheorem{coexample}[theorem]{Counter Example}
\newcommand{\Id}{\mathbbmss{1}}
\newcommand{\rmd}{\textnormal{d}}
\newcommand{\rmh}{\textnormal{h}}
\DeclareMathOperator{\w}{w}
\DeclareMathOperator{\Hom}{Hom}
\DeclareMathOperator{\GGL}{GGL}
\DeclareMathOperator{\FGL}{FGL}
\newcommand{\catname}[1]{\textnormal{\texttt{#1}}}
\font\black=cmbx10 \font\sblack=cmbx7 \font\ssblack=cmbx5 \font\blackital=cmmib10  \skewchar\blackital='177
\font\sblackital=cmmib7 \skewchar\sblackital='177 \font\ssblackital=cmmib5 \skewchar\ssblackital='177
\font\sanss=cmss10 \font\ssanss=cmss8 
\font\sssanss=cmss8 scaled 600 \font\blackboard=msbm10 \font\sblackboard=msbm7 \font\ssblackboard=msbm5
\font\caligr=eusm10 \font\scaligr=eusm7 \font\sscaligr=eusm5  \font\fraktur=eufm10
\font\sfraktur=eufm7 \font\ssfraktur=eufm5 
\font\bsymb=cmsy10 scaled\magstep2
\def\all#1{\setbox0=\hbox{\lower1.5pt\hbox{\bsymb
       \char"38}}\setbox1=\hbox{$_{#1}$} \box0\lower2pt\box1\;}
\def\exi#1{\setbox0=\hbox{\lower1.5pt\hbox{\bsymb \char"39}}
       \setbox1=\hbox{$_{#1}$} \box0\lower2pt\box1\;}
\def\tx#1{{\fam0\relax#1}}
\def\sss#1{{\fam\ssfam\relax#1}}
\def\hpb#1{\setbox0=\hbox{${#1}$}
    \copy0 \kern-\wd0 \kern.2pt \box0}
\def\vpb#1{\setbox0=\hbox{${#1}$}
    \copy0 \kern-\wd0 \raise.08pt \box0}
\def\pmb#1{\setbox0\hbox{${#1}$} \copy0 \kern-\wd0 \kern.2pt \box0}
\def\pmbb#1{\setbox0\hbox{${#1}$} \copy0 \kern-\wd0
      \kern.2pt \copy0 \kern-\wd0 \kern.2pt \box0}
\def\pmbbb#1{\setbox0\hbox{${#1}$} \copy0 \kern-\wd0
      \kern.2pt \copy0 \kern-\wd0 \kern.2pt
    \copy0 \kern-\wd0 \kern.2pt \box0}
\def\pmxb#1{\setbox0\hbox{${#1}$} \copy0 \kern-\wd0
      \kern.2pt \copy0 \kern-\wd0 \kern.2pt
      \copy0 \kern-\wd0 \kern.2pt \copy0 \kern-\wd0 \kern.2pt \box0}
\def\pmxbb#1{\setbox0\hbox{${#1}$} \copy0 \kern-\wd0 \kern.2pt
      \copy0 \kern-\wd0 \kern.2pt
      \copy0 \kern-\wd0 \kern.2pt \copy0 \kern-\wd0 \kern.2pt
      \copy0 \kern-\wd0 \kern.2pt \box0}
\mathchardef\za="710B  
\mathchardef\zb="710C  
\mathchardef\zg="710D  
\mathchardef\zd="710E  
\mathchardef\zve="710F 
\mathchardef\zz="7110  
\mathchardef\zh="7111  
\mathchardef\zvy="7112 
\mathchardef\zi="7113  
\mathchardef\zk="7114  
\mathchardef\zl="7115  
\mathchardef\zm="7116  
\mathchardef\zn="7117  
\mathchardef\zx="7118  
\mathchardef\zp="7119  
\mathchardef\zr="711A  
\mathchardef\zs="711B  
\mathchardef\zt="711C  
\mathchardef\zu="711D  
\mathchardef\zvf="711E 
\mathchardef\zq="711F  
\mathchardef\zc="7120  
\mathchardef\zw="7121  
\mathchardef\ze="7122  
\mathchardef\zy="7123  
\mathchardef\zf="7124  
\mathchardef\zvr="7125 
\mathchardef\zvs="7126 
\mathchardef\zf="7127  
\mathchardef\zG="7000  
\mathchardef\zD="7001  
\mathchardef\zY="7002  
\mathchardef\zL="7003  
\mathchardef\zX="7004  
\mathchardef\zP="7005  
\mathchardef\zS="7006  
\mathchardef\zU="7007  
\mathchardef\zF="7008  
\mathchardef\zW="700A  
\mathchardef\zC="7009  
\newcommand{\be}{\begin{equation}}
\newcommand{\ee}{\end{equation}}
\newcommand{\ra}{\rightarrow}
\newcommand{\bea}{\begin{eqnarray}}
\newcommand{\eea}{\end{eqnarray}}
\def\*{{\textstyle *}}
\newcommand{\R}{{\mathbb R}}
\newcommand{\s}{{\textstyle *}}
\newcommand{\ti}{\times}
\newcommand{\A}{{\cal A}}
\def\Hom{\sss{Hom}}
\def\sJ{{\sss J}}
\def\sT{{\sss T}}
\def\sV{{\sss V}}
\def\sv{{\sss v}}
\def\xi{\tx{i}}
\def\cF{\mathcal{F}}
\def\s*{{\scriptstyle *}}
\newcommand{\beas}{\begin{eqnarray*}}
\newcommand{\eeas}{\end{eqnarray*}}
\newcommand{\dd}{\mathbf{d}}
\newcommand{\Gr}{\textnormal{Gr}}
\newcommand{\mbA}{\mathbb{A}}
\def\pLinr{{\mathbf{l}}} 
\def\Linr{{\mathbf{L}}} 
\begin{document}
\bibliographystyle{plain}

\author{\\
        Andrew James Bruce$^1$\\
        Katarzyna Grabowska$^2$\\
        Janusz Grabowski$^3$\\
        \\
        $^1$ {\it Mathematics Research Unit}\\
        {\it University of Luxembourg}\\
         $^2$ {\it Faculty of Physics}\\
         {\it University of Warsaw}\\
         $^3$ {\it Institute of Mathematics}\\
                {\it Polish Academy of Sciences }
               }

\date{\today}
\title{On the Concept of a Filtered Bundle \thanks{Research of KG and JG funded by the  Polish National Science Centre grant under the contract number DEC-2012/06/A/ST1/00256. }}
\maketitle

\begin{abstract}
We present the notion of a \emph{filtered bundle} as a generalisation of a  graded bundle. In particular, we weaken the necessity of the transformation laws for local coordinates to exactly respect the weight of the coordinates by allowing  more general polynomial transformation laws. The key examples of such bundles include affine bundles and various jet bundles, both of which play fundamental r\^{o}les in geometric mechanics and classical field  theory. We also present the notion of \emph{double filtered bundles} which provide natural generalisations of double vector bundles and double affine bundles. Furthermore, we show that the \emph{linearisation}  of a filtered bundle --  which can be seen as  a partial polarisation of the admissible changes of local coordinates  --  is well defined.
\end{abstract}

\begin{small}
\noindent \textbf{MSC (2010)}: 55R10; 58A20; 16W70; 13F20.\smallskip

\noindent \textbf{Keywords}: fibre bundles; graded manifolds; jet manifolds; affine bundles; filtered rings.
\end{small}

\tableofcontents

\section{Introduction}\label{sec:Intro}
Graded bundles, as defined by Grabowski and Rotkiewicz \cite{Grabowski:2012}, offer a very concise and workable notion of a `non-negatively graded manifold'. The general theory of graded manifolds in our understanding was first formulated  by Voronov  in \cite{Voronov:2001qf}, though we   we must also mention the  works of Kontsevich \cite{Kontsevich:2003}, Mehta \cite{Mehta:2006}, Roytenberg \cite{Roytenberg:2002} and \v{S}evera \cite{Severa:2005}, where various notions of a $\mathbb{Z}$-graded supermanifold appear. They play an ever increasing r\^{o}le in pure mathematics and mathematical physics.\par
Loosely, a graded bundle is a fibre bundle for which the base coordinates are assigned a weight of zero and the fibre coordinates are assigned a weight taking non-zero integer values. The transformation laws for the local coordinates are polynomial in non-zero weight coordinates and preserve the weight.\par
There are many  and natural examples of graded bundles, for instance  vector bundles and higher tangent bundles. The latter were the motivation for developing a higher-order Lagrangian formalism for other graded bundles \cite{Bruce:2014b}. Note also that there is a  one-to-one association between vector bundles and graded bundles of degree one, i.e.,  fibre bundles for which we assign a weight of one to the fibre coordinates and a weight of zero to the base coordinates. From the definition, changes of fibre coordinates must be polynomial in the fibre coordinates and preserve the weight, thus they are linear.  However,  it is clear that this notion does not cover all the polynomial bundle structures of interest in geometric mechanics and classical field theory. In particular, jets of sections of a fibre bundle do \emph{not} always form graded bundles, but instead we have a more general filtered transformation law for the local coordinates. That is, while one can still work with coordinates that are assigned a weight, the transformation rules  do not preserve the weight `on the nose', but also include terms of lower weight.   We will in due course define carefully what we will refer to as \emph{filtered bundles}, taking jets  of sections of a fibre bundle as our cardinal example. The degree of a filtered bundle is defined as the highest weight of the homogeneous local coordinates employed on the said filtered bundle. From the polynomial bundle structure the degree is well defined globally. We can as well consider \emph{filtered manifolds} just not requiring that we deal with a fibration with typical $\R^N$-fibers.    \par
Affine bundles also provide simple examples of  filtered bundles: we can assign a weight of one to the fibre coordinates and zero to the base coordinates. However, the transformation laws for the fibre coordinates are not strictly linear  and so the weight is not exactly preserved. Thus, affine bundles are \emph{not} vector bundles, however one  always has the associated model vector bundle. \par
In a similar way, given any filtered bundle it is possible to canonically associate with it a graded bundle, and then via splitting this graded bundle we arrive at a Whitney sum of vector bundles (i.e., a split graded bundle). More than this, we have an isomorphism between the filtered bundle and the associated graded bundle, however this isomorphism is nearly never canonical. One should of course be reminded of the famous Bachelor--Gaw\c{e}dzki theorem on the splitting of (real) supermanifolds. However, the situation for filtered bundles is far less technical. In a natural sense, filtered bundles are related to graded bundles in a way similar to how filtered algebras are related to graded ones. \par
As we shall show, associated with a filtered bundle is a very particular tower of affine bundles. This is very similar to the case for graded bundles. One should of course keep in mind the tower of affine bundles associated with jet bundles.  It is clear that not all towers of affine bundles arise from the underlying structure of a filtered bundle. In this paper we uncover a simple characterisation  of towers of affine bundles that are associated with a filtered bundle. Our methodology using filtered bundles  in part simplifies the identification of jet bundles using an inductive process. In particular, we have clear criterion for identifying filtered bundles, or really that a given tower of affine bundles is what we will call \emph{filterisable}. That is, via a refinement of the bundle atlas we obtain a filtered bundle.  \par
It is quite natural to consider \emph{multiple filtered bundles} as polynomial bundles with several compatible filtered structures. A very specific example is that of filtered-linear bundles, where we have a filtered structure, of degree $k$ say, and a linear or vector bundle structure that are compatible in a sense we will make clear.   Natural examples of filtered-linear structures include tangent bundles of filtered bundles. More general multiple filtered bundles include jet bundles of filtered bundles, for example. The reader should of course keep in mind double and $k$-fold vector bundles. \par
The notion of the linearisation of a graded bundle was first explored in \cite{Bruce:2014}, and is possible due to the non-negatively graded nature of homogeneous local coordinates and their admissible transformation laws. It turns out that the \emph{linearisation of a filtered bundle} is also well defined and enjoys similar functorial properties as the linearisation of a graded bundle. In particular, we have a functor from the category of filtered bundles of degree $k$ to the category of filtered-linear bundles of degree $k-1$, we will make these notions precise in due course. Jets of sections of a fibre bundle give   natural and well studied examples of filtered bundles. However, to our knowledge, the notion of linearising a jet bundle has not appeared in previous literature.  The consequences and applications of  linearisation in the context of jet bundles applied to geometric mechanics and classical field theory are to date  completely unexplored.\par
We remark that in physics one has to confront affine structures in order to obtain frame (gauge) independent theories, see for example \cite{Grabowska:2005,Popescu:2005}. Passing from  affine-objects to vector-objects is interpreted as fixing some observer or fixing a gauge. Double vector bundles have also proven fundamental to geometric mechanics, and in a similar vain double affine bundles have been employed in this setting, see \cite{Grabowski:2010}. Moreover,  jet bundles are the natural geometric framework for discussing partial differential equations and classical field theories.   The langauge of filtered bundles may well prove useful in this context. However, we leave the applications of filtered and multi-filtered structures for potential future publications.\par
\smallskip
\noindent \textbf{Arrangement:} In Section \ref{sec:preliminaries} we remind the reader of the definions of a polynomial bundle and a graded bundle. In that section we define weighted polynomial algebras as these will be essential in defining filtered bundles. It is in Section \ref{sec:FilteredBundles} that we give the notion of filtered bundles and explore their basic structure. In Section \ref{sec:MultipleFiltered} we define and make intial study of double and multiple filtered bundles. In particular in this section we discuss the linearisation of a filtered bundle.

\section{ Preliminaries}\label{sec:preliminaries}
\subsection{Polynomial bundles}
\begin{definition}[Adapted from \cite{Bertram:2014} and \cite{Voronov:2010}]
A \emph{polynomial bundle} is a fibre bundle $\pi : B \longrightarrow M$ in the category of smooth manifolds with typical fibre $\mathbb{R}^N$ (for some $N$) such that the admissible transformation laws for the fibre coordinates are polynomial in the fibre coordinates.
\end{definition}
Examples of polynomial bundles are numerous and include, vector bundles, graded bundles, various jet bundles and more general Weil bundles (see for example \cite{Kolar:1993}).

\subsection{Graded manifolds and graded bundles}
Let us briefly recall the key components of the theory of graded bundles.   We will consider smooth manifolds explicitly, although the statements in this subsection generalise to the supercase (cf. \cite{Voronov:2001qf}).\par
An important  class of graded manifold are those that carry a non-negative grading. We will require that this grading is associated with a smooth action $\rmh:\R\ti F\to F$ of the monoid $(\R,\cdot)$ of multiplicative reals on a manifold $F$, a \emph{homogeneity structure} in the terminology of \cite{Grabowski:2012}.
This action reduced to $\R_{>0}$ is the one-parameter group of diffeomorphism integrating the \emph{weight vector field}, thus the weight vector field is in this case \emph{h-complete} \cite{Grabowski:2013} and only \emph{non-negative integer weights} are allowed. Thus the algebra $\mathcal{A}(F)\subset C^\infty(F)$ spanned by homogeneous functions is $\mathcal{A}(F) =  \bigoplus_{i \in \mathbb{N}}\mathcal{A}^{i}(F)$, where $\mathcal{A}^{i}(F)$ consists of homogeneous functions of degree $i$. \par
For $t \neq 0$ the action $\rmh_{t}$ is a diffeomorphism of $F$ and, when $t=0$, it is a smooth surjection $\tau=\rmh_0$ onto $F_{0}=M$, with the fibres being diffeomorphic to $\mathbb{R}^{N}$.  Thus, the objects obtained are particular kinds of \emph{polynomial bundles} $\tau:F\to M$, i.e. fibrations which locally look like $U\times\R^N$ and the change of coordinates (for a certain choice of an atlas) are polynomial in $\R^N$. For this reason graded manifolds with non-negative weights \emph{and} h-complete weight vector fields  $\zD$ are also known as \emph{graded bundles} \cite{Grabowski:2012}.
\begin{remark}
A proof of the equivalence of monoid actions of $(\R, \cdot)$ on supermanifolds and non-negatively graded supermanifold can be found in  \cite{Jozwikowski:2016}.
\end{remark}
 \begin{example} If the weight is constrained to be either zero or one, then the weight vector field is precisely a  vector bundle structure on $F$ and will be generally referred  to as an \emph{Euler vector field}.
\end{example}
%
\begin{example}\label{e1}
Consider a manifold $M$ and $\dd=(d_1,\dots,d_k)$, with positive integers $d_i$. The trivial fibration $\tau:M\times\R^\dd\to M$, where
$\R^\dd=\R^{d_1}\times\cdots\times\R^{d_k}$, is canonically a graded bundle with the homogeneity structure $\rmh^\dd$ given by $\rmh_t(x,y)=(x,\rmh_t^\dd(y))$, where
\begin{equation}\label{hsm}
\rmh_t^\dd(y_1,\dots,y_k)=(t\cdot y_1,\dots, t^k\cdot y_k)\,,\quad y_i\in\R^{d_i}\,.
\end{equation}
\end{example}
\begin{theorem}\textsc{(Grabowski-Rotkiewicz \cite{Grabowski:2012})}\label{theorem:1}
Any graded bundle $(F,\rmh)$ is a locally trivial fibration $\tau:F\to M$ with a typical fiber $\R^\dd$, for some $\dd=(d_1,\dots,d_k)$, and the homogeneity structure locally equivalent to the one in Example \ref{e1}, so that the transition functions are graded isomorphisms of $\R^\dd$. In particular, any graded space is diffeomorphically equivalent with $(\R^\dd,\rmh^\dd)$ for some $\dd$.  The algebras $\A(\R^\dd)$ are examples of canonical \emph{graded polynomial algebras (cf. \cite{Grabowski:2016}).}
\end{theorem}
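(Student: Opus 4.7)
The plan is to build the base $M$ and the fibration structure first, then produce graded local trivializations around each point of $M$, and finally verify that transition functions are graded isomorphisms. I set $M := \rmh_0(F)$ and $\tau := \rmh_0$. Because the monoid relation $\rmh_0 \circ \rmh_0 = \rmh_0$ makes $\rmh_0$ a smooth idempotent, its image $M$ is a closed embedded submanifold of $F$ (the fixed-point set of the action) and $\tau : F \to M$ is a smooth retraction; the constant-rank theorem applied to $\rmh_0$ handles this. The $h$-completeness of the weight vector field $\Delta$ guarantees that the orbit of $\rmh|_{\R_{>0}}$ through any $p \in F$ extends smoothly to $t = 0$ and lands in $M$, so every fiber of $\tau$ is contractible onto a distinguished origin in $M$ via $\rmh$ itself.

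The heart of the argument is the production of homogeneous local coordinates around a chosen $m_0 \in M$. Pick coordinates $(x^a)$ on $M$ near $m_0$ and pull them back by $\tau^*$ to obtain weight-zero functions on $F$. For fiber coordinates of positive weight I exploit the weight decomposition of the local function algebra: for any $f \in C^\infty(F)$ defined on a small $\rmh$-invariant neighborhood of $m_0$, the map $t \mapsto \rmh_t^* f$ is smooth on $\R$, and a Hadamard-type argument combined with the hypothesis that the weights of $\Delta$ are non-negative integers bounded above by some $k$ near $m_0$ yields a polynomial expansion
\begin{equation*}
\rmh_t^* f \;=\; \sum_{i=0}^{k} t^{i}\, f_{(i)}, \qquad \rmh_s^* f_{(i)} = s^{i} f_{(i)} .
\end{equation*}
This furnishes a local direct-sum decomposition $\A(U) = \bigoplus_{i=0}^{k} \A^{i}(U)$. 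I then select finitely many homogeneous functions $y^\alpha_i$ of weight $i$, for $i = 1, \dots, k$, whose restrictions to the fiber $\tau^{-1}(m_0)$ have linearly independent differentials at $m_0$ and match the fiber dimension in total. The equivariant implicit function theorem shows that $(x^a, y^\alpha_1, \dots, y^\alpha_k)$ is a coordinate system near $m_0$ in which $\rmh_t$ acts as $\rmh_t^\dd$ of Example \ref{e1} for $\dd = (d_1, \dots, d_k)$.

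Transition functions between two such graded charts intertwine two copies of $\rmh^\dd$, which forces each weight-$i$ fiber coordinate to depend polynomially on the fiber coordinates of weight at most $i$ in a strictly weight-preserving way; hence the change of coordinates is a graded isomorphism of $\R^\dd$. The two remaining assertions -- local triviality of $\tau$ with typical fiber $\R^\dd$, and the global equivalence $(\R^\dd, \rmh^\dd)$ of a graded space (the case when $M$ is a point) -- then follow by patching these graded trivializations, which is unobstructed because the structure cocycle takes values in graded automorphisms. The principal obstacle in the program is the polynomiality of $\rmh_t^* f$ in $t$: in the smooth category a Taylor expansion need not coincide with its function, and only the combination of $h$-completeness with the integrality and local boundedness of the weight spectrum forces the expansion above to terminate as a genuine finite sum. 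Once that polynomial decomposition is secured, the remaining steps are essentially formal consequences of how polynomial maps of $\R^\dd$ that intertwine $\rmh^\dd$ look.
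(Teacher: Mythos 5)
First, a point of comparison: the paper itself gives no proof of this theorem --- it is quoted from Grabowski--Rotkiewicz \cite{Grabowski:2012} as an imported result --- so there is no in-paper argument to measure you against. Your overall strategy (set $M=\rmh_0(F)$, use idempotency of $\rmh_0$ to see that $M$ is an embedded submanifold and $\tau=\rmh_0$ a retraction, build homogeneous fibre coordinates out of the $t$-expansion of $\rmh_t^*f$, and then note that a chart change intertwining two copies of $\rmh^\dd$ must be a graded isomorphism) does track the argument of the cited reference.

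There is, however, a genuine gap at exactly the point you single out as the ``principal obstacle''. The claimed identity $\rmh_t^*f=\sum_{i=0}^{k}t^i f_{(i)}$ for \emph{every} smooth $f$ on an invariant neighbourhood is false: on $F=\R$ with the standard weight-one action $\rmh_t(y)=ty$, the function $f(y)=e^{y}$ gives $\rmh_t^*f=e^{ty}$, which is not polynomial in $t$, even though the weight spectrum here is $\{0,1\}$, the weight vector field is h-complete, and all weights are non-negative integers. So no combination of h-completeness, integrality and boundedness of the weights ``forces the expansion to terminate''; the direct-sum decomposition $\A(U)=\bigoplus_{i}\A^{i}(U)$ holds for the polynomial subalgebra only, not for all of $C^\infty(U)$. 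What is true, and is all you actually need, is weaker: the Taylor coefficients $f_{(i)}=\tfrac{1}{i!}\,\partial_t^i\big|_{t=0}(\rmh_t^*f)$ are always homogeneous of degree $i$, and at a point $m\in M$ the maps $(d\rmh_t|_m)^*$ form a smooth representation of the monoid $(\R,\cdot)$ on the finite-dimensional space $T_m^*F$, hence are simultaneously diagonalisable with eigenvalues $t^{w}$ for non-negative integers $w$ bounded by some $k$ --- this is where the local boundedness of the degree is \emph{derived} rather than hypothesised, and your phrasing of it as a ``hypothesis'' is circular. It follows that $df|_m=\sum_{i\le k}df_{(i)}|_m$, so finitely many Taylor coefficients of ambient coordinate functions already have differentials spanning the conormal directions to $M$, and the inverse function theorem then produces the homogeneous chart. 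With that substitution, the rest of your programme (selection of the $y^\alpha_i$, identification of $\rmh_t$ with $\rmh_t^\dd$ in these coordinates, and gradedness of the transition functions) goes through.
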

The sequence $\dd=(d_1,\dots,d_k)$ we call the \emph{rank of the graded bundle $F$}.
It follows that on  a general graded bundle, one can always pick an  atlas of $F$  consisting of charts for which we  have homogeneous local coordinates $(x^{A}, y_{w}^{i})$, where $\w(x^{A}) =0$ and  $\w(y_{w}^{i}) = w$ with $1\leq w\leq k$, for some $k \in \mathbb{N}$ known as the \emph{degree}  of the graded bundle.  Note that, according to this definition, a graded bundle of degree $k$ is automatically a graded bundle of degree $l$ for $l\ge k$. However, there is always a \emph{minimal degree}. It will be convenient to group all the coordinates with non-zero weight together, as these form a basis of the function algebra of the graded bundle.  The index  $i$ should be considered as a ``generalised index" running over all the possible weights. The label $w$ in this respect largely redundant, but it will come in very useful when checking the validity of various expressions.  The local changes of coordinates  respect the weight and hence are polynomial for non-zero weight coordinates. A  little more explicitly, the changes of local coordinates are of the form
\begin{eqnarray}\label{eqn:translawsGr}
x^{A'} &=& x^{A'}(x),\\
\nonumber y^{a'}_{w} &=& y^{b}_{w} T_{b}^{\:\: a'}(x) + \sum_{\stackrel{1<n  }{w_{1} + \cdots + w_{n} = w}} \frac{1}{n!}y^{b_{1}}_{w_{1}} \cdots y^{b_{n}}_{w_{n}}T_{b_{n} \cdots b_{1}}^{\:\:\: \:\:\:\:\:a'}(x),
\end{eqnarray}
where $T_{b}^{\:\: a'}$ are invertible and the $T_{b_{n} \cdots b_{1}}^{\:\:\: \:\:\:\:\:a'}$ are symmetric in lower indices. For every $\dd$ we can consider the Lie group $\GGL(\dd)$ of graded isomorphisms of $\R^\dd$, i.e. diffeomorphisms of $\R^\dd$ of the form
\be
y^{a'}_{w} = y^{b}_{w} T_{b}^{\:\: a'} + \sum_{\stackrel{1<n  }{w_{1} + \cdots + w_{n} = w}} \frac{1}{n!}y^{b_{1}}_{w_{1}} \cdots y^{b_{n}}_{w_{n}}T_{b_{n} \cdots b_{1}}^{\:\:\: \:\:\:\:\:a'}\,.
\ee
The group $\GGL(\dd)$ acts tautologically on $\R^\dd$. The proof of the following is completely parallel to that for the case of a vector bundle.
\begin{theorem}
Any graded bundle $F$ of rank $\dd$ is a bundle associated with a principal bundle $P\to M$ of the Lie group $\GGL(\dd)$. The principal bundle $P$ is the frame bundle of $F$, i.e. the bundle of graded isomorphisms of the fibers of $F\to M$ with $\R^\dd$.
\end{theorem}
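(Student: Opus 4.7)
The plan is to mirror the classical proof that any rank-$n$ vector bundle is associated with its $\GL(n)$-frame bundle, exploiting the local triviality of graded bundles given by Theorem \ref{theorem:1}.

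First, I would fibrewise construct $P$. For each $x\in M$, set
\begin{equation*}
P_x = \{\phi : \R^\dd \longrightarrow F_x \mid \phi \text{ is a graded isomorphism}\},
\end{equation*}
where the grading on $F_x$ is the one induced by the restriction of $\rmh$. The tautological action of $\GGL(\dd)$ on $\R^\dd$ induces a right action on $P_x$ by precomposition, $\phi\cdot g = \phi\circ g$. That this action is free is immediate (if $\phi\circ g=\phi$ then $g=\mathrm{id}$). Transitivity is exactly the content of the second part of Theorem \ref{theorem:1}: any two graded trivialisations of the fibre $F_x$ differ by a graded automorphism of $\R^\dd$, i.e.\ by an element of $\GGL(\dd)$. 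Taking $P=\bigsqcup_{x\in M}P_x$ gives a set over $M$ equipped with a fibrewise free and transitive right $\GGL(\dd)$-action.

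Next I would install the smooth principal-bundle structure. By the local triviality part of Theorem \ref{theorem:1}, there is an open cover $\{U_\alpha\}$ of $M$ with graded trivialisations $\psi_\alpha : F|_{U_\alpha}\to U_\alpha\times\R^\dd$. Each $\psi_\alpha$ gives a smooth section $\sigma_\alpha : U_\alpha\to P|_{U_\alpha}$, namely $\sigma_\alpha(x)=\psi_\alpha^{-1}|_{\{x\}\times\R^\dd}$, and hence a bijection $U_\alpha\times\GGL(\dd)\to P|_{U_\alpha}$, $(x,g)\mapsto \sigma_\alpha(x)\cdot g$. The transition between two such trivialisations over $U_\alpha\cap U_\beta$ is given by left translation by the transition function $\psi_\alpha\circ\psi_\beta^{-1}$ of $F$, which takes values in $\GGL(\dd)$ and is smooth. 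Declaring these trivialisations to be diffeomorphisms endows $P$ with a canonical structure of a smooth principal $\GGL(\dd)$-bundle over $M$, independent of the choice of cover, and by construction its transition cocycle coincides with that of $F$.

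Finally, I would identify $F$ with the associated bundle. Consider the evaluation map
\begin{equation*}
\mathrm{ev}: P\times \R^\dd \longrightarrow F, \qquad (\phi, v)\longmapsto \phi(v).
\end{equation*}
It is smooth, $\GGL(\dd)$-invariant (since $\mathrm{ev}(\phi\cdot g, g^{-1}\cdot v)=\phi(v)$), fibrewise surjective, and, on a fibre over $x$, the quotient of $P_x\times\R^\dd$ by the diagonal $\GGL(\dd)$-action is in bijection with $F_x$ because the action on $P_x$ is simply transitive. Hence $\mathrm{ev}$ descends to a fibrewise bijection $P\times_{\GGL(\dd)}\R^\dd \to F$, which is smooth and a diffeomorphism in local trivialisations $\sigma_\alpha$. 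This yields the desired isomorphism of graded bundles, identifying $P$ with the frame bundle of $F$.

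The only genuinely non-routine point, and thus the main obstacle, is verifying that $\GGL(\dd)$ is a finite-dimensional Lie group and that the resulting transition cocycle is smooth. Both follow once one observes that a graded isomorphism of $\R^\dd$ is determined by finitely many coefficients $T_b^{\;a'}$ and $T_{b_n\cdots b_1}^{\;\;\;\;\;a'}$ with the $T_b^{\;a'}$ block invertible; this realises $\GGL(\dd)$ as an open subset of an affine space (hence a smooth manifold), and polynomial composition makes it into a Lie group. Once this is in place, the rest of the argument is a direct transcription of the vector bundle case.
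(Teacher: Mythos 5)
Your proof is correct and takes exactly the approach the paper intends: the paper supplies no details, saying only that the proof is ``completely parallel to that for the case of a vector bundle'', and your write-up is precisely that parallel transcription, including the one genuinely graded ingredient (realising $\GGL(\dd)$ as a Lie group, via its finitely many polynomial coefficients with invertible linear block, acting simply transitively on graded trivialisations of each fibre).
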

Importantly, a graded bundle  of degree $k$  admits a sequence of  surjections
\begin{equation}\label{eqn:fibrations}
F=F_{k} \stackrel{\tau^{k}_{k-1}}{\longrightarrow} F_{k-1} \stackrel{\tau^{k-1}_{k-2}}{\longrightarrow}   \cdots \stackrel{\tau^{3}_2}{\longrightarrow} F_{2} \stackrel{\tau^{2}_1}{\longrightarrow}F_{1} \stackrel{\tau^{1}}{\longrightarrow} F_{0} = M,
\end{equation}
\noindent where $F_l$ itself is a graded bundle over $M$ of degree $l$ obtained from the atlas of $F_k$ by removing all coordinates of degree greater than $l$ (see the next paragraph). \par
Note that  $F_{1} \rightarrow M$ is a linear fibration and the other fibrations $F_{l} \rightarrow F_{l-1}$ are affine fibrations in the sense that the changes of local coordinates for the fibres are linear plus  an additional additive terms of appropriate weight.  The model fibres here are $\mathbb{R}^{n}$.\par
A homogeneity structure is said to be \emph{regular} if
\begin{equation}\label{eqn:HomoReg}
\left.\frac{\rmd }{\rmd t}\right|_{t=0}\rmh_{t}(p) = 0   \hspace{25pt}\Rightarrow   \hspace{25pt}  p = \rmh_{0}(p),
\end{equation}
for all points $p \in F$. Moreover, if homogeneity structure is regular then the graded bundle is of degree $1$ and we have a vector bundle structure. The converse is also true and so $\tau^{1}:F_{1} \rightarrow M $ is a vector bundle. It is an amazing fact that all the structure of a vector bundle can be encoded in a regular homogeneity structure, (see \cite{Grabowski:2009}).
\begin{example} The principal canonical example of a graded bundle is the higher tangent bundle $\sT^{k}M$; i.e. the $k$-th jets (at zero) of curves $\gamma: \mathbb{R} \rightarrow M$.
\end{example}
Morphisms between graded bundles necessarily preserve the weight. In other words morphisms relate the respective homogeneity structures, or equivalently morphisms relate the respective weight vector fields. Evidently, morphisms of graded bundles can be composed as standard maps between smooth manifolds and so we obtain the category of graded bundles, which we denote as $\catname{GrB}$. The full subcategory of graded bundles of a given minimal degree $k \in \mathbb{N}$ will be denoted as $\catname{GrB}[k]$.
\subsection{Filtered polynomial algebras}
Recall that a (real) \emph{graded polynomial algebra} is understood as an algebra isomorphic to the polynomial algebra $\A(\R^\dd)$ with the $\mathbb{N}$-gradation induced from the homogeneity structure of $\R^\dd$. Such algebras can be also characterised as commutative real algebras $\A$ freely generated by a finite set of generators
(indeterminants) $\{ X^{I}_{w}\}$, where $I$ is a (multi-)index  and $w \in \mathbb{N}\setminus \{0\}$ is a label that describes the weight:
$$\w(X^{I}_{w}) = w\,.$$
We will refer to these generators (indeterminants) as \emph{homogeneous (or weighted) generators}.  The weight of a \emph{homogeneous monomial} is simply the sum of the weight of the indeterminants that make up the monomial and the concept of a homogeneous polynomial is clear. The algebra $\A$ is therefore canonically graded, $\A=\oplus_{i=0}^\infty \,\A^i$.
\begin{remark}
One could of course consider the homogeneous indeterminants to be supercommutative by assigning them some independent Grassmann parity. For simplicity and applications in mind we consider only strictly commutative indeterminants.
\end{remark}
\begin{definition}\label{def:weightedpoly}
A \emph{weighted polynomial} in the indeterminants $\{ X^{I}_{w}\}$  is a finite sum of monomials in the said weighted indeterminants. The \emph{degree}  of a weighted polynomial is the maximal weight of the monomials (with non-zero coefficients) that make up the weighted polynomial.
\end{definition}
Any polynomial $P \in \mathcal{A}$ is then of the form
$$P =  \sum_{n \in \mathbb{N}} \frac{1}{n!} X^{I_{1}}_{w_{1}}X^{I_{2}}_{w_{2}} \cdots X^{I_{n}}_{w_{n}} P_{I_{n} \cdots I_{2} I_{1}},$$
\noindent where the coefficients are taken to be symmetric, i.e.,
$$P_{I_{n} \cdots L, M, \cdots I_{2} I_{1}} =   P_{I_{n} \cdots M, L, \cdots I_{2} I_{1}}\in\R\,.$$
The degree of any weighted polynomial is given by
$$\textnormal{deg}(P) = \max(w_{1} + w_{2} + \cdots + w_{n}) \in \mathbb{N},$$
over all the non-zero monomials that make up the polynomial.

\begin{example}
Consider the weighted indeterminants $\{ X_{1}, X_{2}\}$ of degree 1 and 2, respectively. The the monomial $X_{1}X_{2}$ is of weight $3$, while the monomial $X_{1}X_1X_{2}=X_{1}^2X_{2}$ is of weight $4$, so that the polynomial $X_{1}X_{1}X_{2}+X_{1}X_{2}$ is of degree 4.\end{example}
In the obvious way the (weighted) degree of a polynomial defines an $\mathbb{N}$-filtration in the algebra of polynomials given by
$$\{0\} \subset \mathcal{A}_{0} \subset \mathcal{A}_{1} \subset \mathcal{A}_{2}\subset\cdots\subset \mathcal{A},$$
where the filtration is given by the degree of the weighted polynomials.  That is we have
$$\mathcal{A} = \bigcup_{n \in \mathbb{N}}\mathcal{A}_{n},$$
\noindent where $\mathcal{A}_{n}$ is the subspace of $\mathcal{A}$ spanned by polynomials of degree $\leq n$,
$$\mathcal{A}_n = \bigoplus_{i=0}^n\mathcal{A}^{i}\,,$$
and
$$\A_n\cdot\A_m\subset\A_{n+m}\,.$$
Any graded polynomial algebra is therefore canonically filtered. Moreover, this filtration is \emph{connected}, i.e., the part $\A_0$ is $\R$.
The pair consisting of the polynomial algebra with this filtration will be called a \emph{filtered polynomial algebra}.
Note that any filtration defines the graded algebra
$$\Gr(\A)=\bigoplus_{i=0}^\infty \A_{i+1}/\A_{i}$$
and it is clear that that the graded algebra associated with the filtration on a graded polynomial algebra $\A$ is isomorphic as a graded algebra (this isomorphism depends on the choice of homogeneous generators) with $\A$.
Actually we have the following (cf. \cite{Grabowski:2016}).
\begin{proposition}\label{prop:graded_generators} Any connected $\mathbb{N}$-filtration $\{\A_i\}_{i\in\mathbb{N}}$ in a polynomial algebra $\A=\R[z^1,\dots,z^N]$ gives a filtered algebra isomorphic to $\A(\R^\dd)$ for some $\dd=(d_1,d_2,\hdots,d_r)$.
\end{proposition}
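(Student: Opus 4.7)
The plan is to construct a system of homogeneous generators of $\A$ by induction on the weight, adapted to the filtration $\{\A_i\}$, and then to show that these generators exhibit $\A$ as a weighted polynomial algebra isomorphic to $\A(\R^\dd)$ in a way that respects the filtration. Implicit here is finite dimensionality of each $\A_n$, which follows from connectedness together with $\A$ being finitely generated and exhausted by the filtration; this guarantees the counts $d_n$ are finite.

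First I would start at weight $1$: by connectedness $\A_0=\R$, so I pick a vector space complement $W_1$ of $\A_0$ in $\A_1$ and a basis $\{X^I_1\}_{I=1}^{d_1}$ of $W_1$, declaring these to have weight $1$. Inductively, having chosen homogeneous generators of all weights $\le n-1$ and formed the subalgebra $B_{n-1}\subset\A$ they generate, I pick $W_n$ to be a vector space complement of $B_{n-1}\cap\A_n$ inside $\A_n$ and take $\{X^I_n\}_{I=1}^{d_n}$ a basis of $W_n$, of weight $n$. The important subtlety is that $W_n$ is chosen to complement $B_{n-1}\cap\A_n$, not merely $\A_{n-1}$; this prevents any new generator from coinciding with a product of earlier ones modulo lower filtration. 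The procedure terminates because each $z^j$ lies in some $\A_{n_j}$, so $B_n=\A$ once $n\ge r:=\max_j n_j$, yielding $\dd=(d_1,\ldots,d_r)$.

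Next, let $\phi\colon\A(\R^\dd)\to\A$ be the algebra homomorphism sending the formal weight-$w$ indeterminates to the chosen $X^I_w$. By construction $\phi$ is surjective and maps weighted polynomials of degree $\le n$ into $\A_n$. For injectivity I argue by contradiction: a nonzero element of $\ker\phi$ of minimal weighted degree $n$ would force a nonzero element of $W_n$ to lie in $B_{n-1}\cap\A_n$, contradicting the choice of complement; equivalently, surjectivity plus comparison of transcendence degrees forces $d_1+\cdots+d_r=N$ and hence $\phi$ is an isomorphism of algebras. To upgrade this to an isomorphism of \emph{filtered} algebras I verify $\phi(\A(\R^\dd)_n)=\A_n$: the inclusion $\subseteq$ is immediate from the construction, while $\supseteq$ follows from the inductive decomposition $\A_n=(B_{n-1}\cap\A_n)\oplus W_n$ together with the induction hypothesis that $B_{n-1}\cap\A_n$ is already the $\phi$-image of weighted polynomials of total weight $\le n$ in the previously chosen generators.

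The main obstacle throughout is securing algebraic independence of the homogeneous generators while simultaneously controlling the filtration image. The careful choice of $W_n$ against $B_{n-1}\cap\A_n$, rather than against $\A_{n-1}$, is precisely what rules out the pathology of a product of lower-weight generators ``accidentally'' dropping into a deeper part of the filtration; maintaining this inductive invariant at every step is the heart of the argument and is what makes the associated graded reconstruction on the nose faithful to the original filtered structure.
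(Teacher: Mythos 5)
Your construction is exactly the paper's: the proof given there (deferring the details to \cite{Grabowski:2016}) builds homogeneous generators by successively adjoining at the $(i+1)$-th step a basis of $\A_{i+1}/(\A_{i+1}\cap\langle\A_i\rangle)$, where $\langle\A_i\rangle$ is the subalgebra generated by $\A_i$ --- and this is precisely your complement $W_{i+1}$ of $B_i\cap\A_{i+1}$ in $\A_{i+1}$, since inductively $B_i=\langle\A_i\rangle$. Your extra verification that the resulting map is an isomorphism of filtered algebras only fills in steps the paper delegates to the cited reference, so the proposal is correct and follows essentially the same route.
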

\begin{proof}
The proof is completely parallel to that of \cite[Proposition 1]{Grabowski:2016}: we successively construct homogeneous generators adding at the $(i+1)$-th step a basis of $\A_{i+1}/(\A_{i+1}\cap\langle\A_i\rangle)$, where $\langle\A_i\rangle$ is the subalgebra generated by $\A_i$. In particular,
$$d_{i+1}=\dim\left(\A_{i+1}/(\A_{i+1}\cap\langle\A_i\rangle)\right)\,.$$
\end{proof}

\section{Filtered  Bundles}\label{sec:FilteredBundles}
\subsection{The definition of a filtered bundle}
Heuristically, we think of a filtered bundle as a graded bundle for which we have now weakened the admissible coordinate transformations to no longer preserve the weight exactly, but instead we can include polynomial terms that are of lower order in weight.
\begin{definition}
A \emph{filtered bundle} $\pi : \mathcal{F} \longrightarrow M$, is polynomial bundle for which the adapted  fibre coordinates are assigned   non-zero weights in $\mathbb{N}$, the base coordinates are assigned weight zero,  and the admissible changes of coordinates respect the degree as weighted polynomials. The  \emph{degree} $k$ of a filtered bundle is the maximum weight of the coordinates, and the \emph{rank} of a filtered bundle of degree $k$ is a sequence $\dd=(d_1,\dots,d_k)$, where $d_i$ is the number of coordinates of degree $i$.
\end{definition}
In clearer terms, this means that on any filtered bundle $\mathcal{F}$ one has adapted local coordinates
$$(\underbrace{x^{a}}_{0}, ~ \underbrace{X_{w}^{I}}_{w}),$$
\noindent $(1 < w \leq  k)$  and the admissible changes of local coordinates are \emph{graded affine}
\begin{align}\label{eqn:tranformlaws}
&x^{a'} = x^{a'}(x), && X_{w}^{I'} =  \sum_{w_{1} + \cdots + w_{n}\leq w} \frac{1}{n!} X^{J_{1}}_{w_{1}} X^{J_{2}}_{w_{2}} \cdots X^{J_{n}}_{w_{n}}T_{J_{n} \cdots J_{2} J_{1}}^{\hspace{30pt} I'}(x).&
\end{align}
Note that the transformation laws are almost identical to that of a graded bundle (\ref{eqn:translawsGr}), except that we now include terms that are \emph{not} homogeneous in weight but of lower order. As the transformation law are invertible, the  matrix in the above describing the linear term is invertible.  This allows us to define the \emph{polynomial function algebra} $\A(\cF)$ as the algebra of  smooth functions on $\cF$ that are polynomial in the coordinates $X_w^I$. It is important to note that this algebra is canonically filtered, $\A(\cF)=\cup_i\A_i(\cF)$.
 \begin{remark}
It is clear directly from the definition that filtered bundle of rank $\dd$ is a bundle associated with a principal bundle with the structure group  $\FGL(\R^\dd)$ of filtered automorphisms of $\R^\dd$, i.e. the group of diffeomorphisms of $\R^\dd$ of the form
\be\label{eqn:tranformlaws1}
X_{w}^{I'} =  \sum_{w_{1} + \cdots + w_{n}\leq w} \frac{1}{n!} X^{J_{1}}_{w_{1}} X^{J_{2}}_{w_{2}} \cdots X^{J_{n}}_{w_{n}}T_{J_{n} \cdots J_{2} J_{1}}^{\hspace{30pt} I'}.
\ee
\end{remark}
 \begin{remark}\label{r11} In other words, we have an $\mathbb{N}$-filtration in the structure sheaf corresponding to a fibration of filtered polynomial algebras (\emph{filtered polynomial algebra bundle}). This fibration is locally trivial, i.e. it is of the form $U\ti\A(\R^\dd)$ for some $\dd$, and the transition functions in our atlas are of the form (\ref{eqn:tranformlaws1}), i.e. consist of automorphisms of filtered polynomial algebras. The space of sections of this bundle gives exactly polynomial function algebra $\A(\cF)$.
\end{remark}
\begin{remark}
We use the nomenclature \emph{filtered bundle} to be consistent with the notion of a \emph{graded bundle}. In particular, we insist that (collectively) the non-zero weight coordinates can take all `values' in $\mathbb{R}^p$ and not just in some subset thereof. The underlying structure of a polynomial bundle is vital in our definition of a filtered bundle, as it is for the definition of a graded bundle.  If we do not insist that the collective fiber coordinates fill the whole of $\R^p$, then we work with a more general geometric object which we can call a \emph{filtered manifold}. Note that  Morimoto \cite{Morimoto:2000} has defined a notion of a \emph{filtered manifold} as a manifold with a particular filtration of distributions (ie., substructures of the tangent bundle), such a notion is distinct from ours. The notion presented here should not be confused with the notion of a \emph{filtered vector bundle}.
\end{remark}
\begin{example}
All graded bundles --  specifically vector bundles and higher tangent bundles -- are examples of filtered bundles.
\end{example}
\begin{example}
Affine bundles $\pi : \mathsf{A} \rightarrow M$ admit  adapted coordinates  $(x^{a}, ~ X^{\alpha})$ and the admissible changes of fibre coordinates are of the form
$$X^{\alpha'} = X^{\beta}T_{\beta}^{\:\:\: \alpha'}(x) + T^{\alpha'}(x).$$
Thus, if we assign weight zero to the base coordinates and weight one to the fibre coordinates we clearly have a filtered bundle of degree one.
\end{example}
\begin{example}\label{exp:jetbundles} The fundamental example is that of a \emph{jet manifold} of sections of a fibre bundle. Let $ \pi: E \rightarrow M$ be a fibre bundle in the category of  smooth manifolds. Then the $k$-th jet manifold of sections $\sJ^{k}E$ is naturally a filtered bundle. A little more specifically, let  $\sJ_x^k E$ denote the set of equivalence classes of sections of $E$ that at $x\in M$ are in contact to order $k$. That is, elements of this set are sections of $E$ for which all the partial derivatives up to and including order $k$ all agree at the point $x$ (see for example \cite[pages 124-125]{Kolar:1993} for details). We then define
$$\sJ^kE := \bigcup_{x \in M} \sJ^k_x E.$$
The filtered bundle structure is best described using natural local coordinates, and it is well known that the admissible changes of coordinates are of the form (\ref{eqn:tranformlaws}). For clarity, let us initially restrict attention to first jets. If we equip $E$ with adapted coordinates $(x^{a},~ y^{\alpha})$, then the first jet $\sJ^1 E$ comes with naturally inherited coordinates $(x^{a},~ y^{\alpha},~z^{\beta}_{b})$. The coordinate transformations here are
\begin{align*}
&x^{a'} = x^{a'}(x), &   y^{\alpha'} = y^{\alpha'}(x,y), &&  z^{\alpha'}_{a'} = \left( \frac{\partial x^{b}}{\partial x^{a'}} \right)\left(\frac{\partial}{\partial x^{b}} + z^{\beta}_{b} \frac{\partial}{\partial y^{\beta}} \right) y^{\alpha'}(x,y).        &
\end{align*}
Clearly these transformation laws are filtered if we make the assignment of weight
$$(\underbrace{x}_{0}, ~ \underbrace{y}_{0} , ~ \underbrace{z}_{1}).$$
The second order jet bundle $\sJ^2E$ similarly comes equipped with natural local coordinates $(x^{a},~ y^{\alpha},~z^{\beta}_{b}, ~ w^\gamma_{cd})$, where the transformation for the coordinates $w$ is
$$w^{\alpha'}_{a'b'} = \left( \frac{\partial x^{c}}{\partial x^{a'}}  \right) \left(  \frac{\partial}{\partial x^c} + z_c^\beta \frac{\partial}{\partial y^\beta}  +  w_{cd}^\beta  \frac{\partial}{\partial z_d^\beta}   \right) z_{b'}^{\alpha'}(x, y,z). $$
To make the filtered structure clear, we can \emph{symbolically} write the transformation laws for $z$ and $w$ as
\begin{align*}
& z' = z \frac{\partial y'}{\partial y} + \frac{\partial y'}{\partial x},\\
& w' = w \frac{\partial y'}{\partial y} +  z z \frac{\partial^2 y'}{\partial y \partial y} + z \frac{\partial^2 y'}{\partial x \partial y} + \frac{\partial^2 y'}{\partial x \partial x},
\end{align*}
by supressing indices and forgetting $x'$. It is then clear that we do indeed have a filtered bundle structure by assigning $w^{\gamma}_{cd}$ weight $2$. Similar considerations show that any $\sJ^kE$ comes with the natural structure of a filtered bundle.
 \end{example}
 \begin{remark}\label{re1}
One has to stress that the jet bundle $\sJ^kE$ as a filtered bundle is the bundle over $E$. In the theory of jet bundles, in the connection with iterated jets, the jet bundle $\sJ^kE$ is considered also as a bundle over $M$. Of course it leads to different understanding of iterated bundles $\sJ^k(\sJ^rE)$. In this paper we will consequently understand
$\sJ^kE$ as a bundle over $E$. Of course one take into account the additional structure of fibration $E\to M$ and consider \emph{filtered bundles over fibered manifolds} but it is outside of the scope of this paper.
\end{remark}
\begin{example}\label{exp:2filtered}
Let us consider a general filtered bundle of degree $2$, which we will denote as $\mathcal{F}_{\{2\}}$. We can employ local coordinates
$$(\underbrace{x^{a}}_{0}, ~ \underbrace{Y^{\alpha}}_{1}, ~\underbrace{Z^{i}}_{2}),$$
\noindent and the admissible changes of fibre coordinates are of the form
\begin{align*}
& Y^{\alpha'} =  Y^{\beta}T_{\beta}^{\:\:\: \alpha'}(x) + T^{\alpha'}(x), && Z^{i'} =  Z^{j}T_{j}^{\:\:\: i'}(x) + \frac{1}{2!} Y^{\alpha} Y^{\beta}T_{\beta \alpha}^{\:\:\:\:\:\: i'}(x) + Y^{\alpha}T_{\alpha}^{\:\:\: i'}(x) + T^{i'}(x). &
\end{align*}
\end{example}

Because we are dealing with `filtered' changes of coordinates rather than `graded' changes of coordinates it is clear that we cannot encode the structure of  a filtered bundle in a smooth action of the monoid of reals:  a \emph{homogeneity structure} in the language of \cite{Grabowski:2013,Grabowski:2009,Grabowski:2012}. This seems to be an obstacle to a clear global understanding of a filtered bundle. \par

Directly from the admissible changes of coordinates we see that  for any filtered bundle of degree $k$  we have, like in the case of a graded bundle, a tower of affine fibrations
\begin{equation}\label{eqn:tower}
\mathcal{F} := \mathcal{F}_{\{k \}} \stackrel{\tau^{k}_{k-1}}{\xrightarrow{\hspace*{20pt}} } \mathcal{F}_{\{k -1\}}  \stackrel{\tau^{k-1}_{k-2}}{\xrightarrow{\hspace*{20pt}} } \mathcal{F}_{\{k -2\}}   \longrightarrow  \cdots \longrightarrow \mathcal{F}_{\{1 \}}  \stackrel{\tau^{1}_{0}}{\xrightarrow{\hspace*{15pt}} } \mathcal{F}_{\{0\}} =: M.
\end{equation}
In particular, we have an affine bundle $\tau^{1}_{0}:\mathcal{F}_{\{1 \}} \rightarrow M$. \par
The weighted polynomial algebra $\mathcal{A}(\mathcal{F})$ has a  module structure over $C^{\infty}(M)$. Let us denote by $\mathcal{A}_{l}(\mathcal{F})$ the subalgebra of $\mathcal{A}(\mathcal{F})$ locally generated by weighted by monomials of weight $ \leq  l$. Then we have the filtration of algebras
$$C^{\infty}(M) = \mathcal{A}_{0}(\mathcal{F}) \subset \mathcal{A}_{1}(\mathcal{F}) \subset \cdots \subset \mathcal{A}_{k}(\mathcal{F})= \mathcal{A}(\mathcal{F})\,,$$
 dual to the tower (\ref{eqn:tower}).
\begin{example} Continuing Example \ref{exp:jetbundles}, it is well known that associated with any jet manifold $\sJ^kE$ of sections of a fibre bundle $\pi: E \rightarrow M$, there is a tower of affine fibrations given by reducing the order of contact of the sections
$$\sJ^kE \stackrel{\tau^{k}_{k-1}}{\xrightarrow{\hspace*{20pt}} } \sJ^{k -1}E  \stackrel{\tau^{k-1}_{k-2}}{\xrightarrow{\hspace*{20pt}} } \sJ^{k -2}E   \longrightarrow  \cdots \longrightarrow \sJ^1E  \stackrel{\tau^{1}_{0}}{\xrightarrow{\hspace*{15pt}} } \sJ^0E := M.$$
\end{example}
An important observation is the following.
\begin{theorem}\label{T0} The vector bundle models of the affine fibrations $\sv(\mathcal{F}_{\{i \}})\rightarrow \mathcal{F}_{\{i -1\}}$ are not arbitrary vector bundles over $\mathcal{F}_{\{i -1\}}$, but  are pull-back bundles of
certain vector bundles $\zs_i:V_i\rightarrow M$, $i=1,\dots,k$.
\be\label{pb}
\sv(\mathcal{F}_{\{i \}})=({\tau^{i}_{0}})^*(V_i)\,.
\ee
\end{theorem}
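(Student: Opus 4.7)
The plan is to read off the transition functions of $\sv(\mathcal{F}_{\{i\}}) \to \mathcal{F}_{\{i-1\}}$ directly from the filtered transformation laws (\ref{eqn:tranformlaws}) and observe that they depend only on the base coordinates on $M$; the vector bundle $V_i \to M$ and the pull-back identification will then follow.

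I would first fix an adapted atlas on $\mathcal{F}$ and isolate in (\ref{eqn:tranformlaws}) the part of $X^{I'}_i$ that is linear in the top-weight coordinates $X^J_i$. A monomial $X^{J_1}_{w_1}\cdots X^{J_n}_{w_n}$ appearing on the right-hand side satisfies $w_1 + \cdots + w_n \leq i$, with each $w_j \geq 1$, so the presence of an $X^J_i$ factor forces $n=1$ and $w_1 = i$. Hence the top-weight transformation has the form
\begin{equation*}
X^{I'}_i \;=\; X^J_i\, T^{\:\: I'}_J(x) \;+\; R^{I'}\!\bigl(x,\, X^{\bullet}_{1},\ldots,X^{\bullet}_{i-1}\bigr),
\end{equation*}
where the matrix $T^{\:\: I'}_J$ is a function of $x$ alone and $R^{I'}$ is a weighted polynomial of strictly lower weight. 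This is the key structural observation.

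Next, I would identify $\sv(\mathcal{F}_{\{i\}}) \to \mathcal{F}_{\{i-1\}}$ explicitly: as the model vector bundle of an affine bundle, its transition cocycle is precisely the linear part $T^{\:\: I'}_J$ above, now read as functions on overlaps in $\mathcal{F}_{\{i-1\}}$. Since these involve only the $M$-coordinates, they are in fact pulled back from $M$ along $\tau^{i-1}_0$ (which composed with $\mathcal{F}_{\{i\}} \to \mathcal{F}_{\{i-1\}}$ gives $\tau^{i}_0$ as in the statement). Declaring $\sigma_i \colon V_i \to M$ to be the vector bundle built from the cocycle $\{T^{\:\: I'}_J(x)\}$ on an atlas of $M$ covering the bundle atlas of $\mathcal{F}$, the identification $\sv(\mathcal{F}_{\{i\}}) \cong (\tau^{i-1}_0)^*(V_i)$ is then just a comparison of transition data.

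The main point to verify is that the $T^{\:\: I'}_J(x)$ actually form a Čech cocycle on $M$ (so that $V_i$ exists globally) and that the resulting $V_i$ is independent of the chosen adapted atlas. Both are consequences of the cocycle identity for the filtered transitions on $\mathcal{F}$: composing two filtered changes of coordinates and picking out the coefficient of $X^J_i$ reproduces exactly the matrix product $T^{\:\: I'}_J \, T^{\:\: J}_K$, so the cocycle condition on $\mathcal{F}$ restricts on triple overlaps to the usual matrix cocycle on $M$; similarly, any change of adapted atlas is a filtered isomorphism, whose weight-$i$ linear part is again a function of $x$ alone and therefore yields an isomorphism of the corresponding $V_i$'s. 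This bookkeeping is the only genuinely nontrivial step, but it is routine once the extraction of the linear part above is in place.
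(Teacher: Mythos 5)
Your proposal is correct and follows essentially the same route as the paper: the paper's proof likewise observes that the transition functions of $\sv(\mathcal{F}_{\{i\}})$ are the linear parts of \eqref{eqn:tranformlaws}, namely $X_{w}^{I'}=X_{w}^{I}T_{I}^{\ I'}(x)$ with $T$ depending on $x$ alone, and concludes that the linear fibres over a fibre of $\tau^{i}_{0}$ can be identified to give $V_i(x)$. Your extra bookkeeping (that a monomial containing a top-weight factor must be a single linear term, and that the \v{C}ech cocycle condition on $M$ follows from the filtered cocycle identity) simply makes explicit what the paper leaves implicit.
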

\begin{proof}
The transition functions in the vector bundle $\sv(\mathcal{F}_{\{i \}})$ are linear parts of the transition functions
\eqref{eqn:tranformlaws}, thus
$$X_{w}^{I'} =  X^{I}_{w}T_{I}^{\hspace{3pt} I'}(x)\,.
$$
This means that we can identify all linear fibers over a fiber of ${\tau^{i}_{0}}:\mathcal{F}_{\{i \}}\rightarrow M$
over $x\in M$ as $V_i(x)$ and view the vector bundle $\sv(\mathcal{F}_{\{i \}})$ as the pull-back bundle of $V_i$.
\end{proof}

\subsection{The category of filtered bundles}
A morphism between filtered bundles is a morphisms of polynomial bundles that respects the  weight of (generally locally non-homogeneous) polynomials. Let us employ local coordinates $(x^{a}, X^{I}_{w})$ and $(y^{\alpha}, Y_{u}^{\Sigma})$ on $\mathcal{F}_{\{k\}}$ and $\mathcal{G}_{\{l\}}$ respectively, then a morphism
$$\phi : \mathcal{F}_{\{k\}} \rightarrow \mathcal{G}_{\{l\}},$$
\noindent is locally of the form
\begin{align}\label{eqn:filtered morphisms}
&\phi^{*}y^{\alpha} = \phi^{\alpha}(x), && \phi^{*}Y^{\Sigma}_{u} =  \sum_{w_{1}+ \cdots w_{n} \leq u}\frac{1}{n!} X^{I_{1}}_{w_{1}} X^{I_{2}}_{w_{2}}\cdots X^{I_{n}}_{w_{n}}\phi^{\Sigma}_{I_{n} \cdots I_{2}I_{1}}(x).&
\end{align}
Evidently filtered bundles and their morphisms form a category, where the morphisms are composed as smooth maps between manifolds.  We will denote this category as $\catname{FilB}$. If we wish to restrict attention to filtered bundles of some given fixed  degree, say $k$, then we write $\catname{FilB}[k]$ for the obvious full subcategory.\par
The category of filtered bundles has some interesting subcategories.
\begin{enumerate}
\item Affine bundles form a full subcategory of $\catname{FilB}$ -- moreover we have an equivalence of categories between $\catname{FilB}[1]$ and the category of affine bundles $\catname{AffB}$.
\item Graded bundles form a subcategory, but not a full subcategory of $\catname{FilB}$ -- local homogeneity is exactly preserved.
\item Jet bundles form a subcategory, but not a full subcategory of $\catname{FilB}$.
\end{enumerate}
\begin{proposition}
Each projection in the tower (\ref{eqn:fibrations}) is a morphism of filtered bundles, and thus, any composition of the projections is a morphism in the category of filtered bundles.
\end{proposition}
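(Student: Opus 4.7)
The plan is to show each elementary projection $\tau^{i}_{i-1}\colon\mathcal{F}_{\{i\}}\to\mathcal{F}_{\{i-1\}}$ is a morphism of filtered bundles; composition then follows for free since morphisms in $\catname{FilB}$ compose.

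First I would spell out what $\tau^{i}_{i-1}$ is in adapted coordinates. On $\mathcal{F}_{\{i\}}$ choose coordinates $(x^{a}, X^{I_{1}}_{1}, \dots, X^{I_{i}}_{i})$. The reduced bundle $\mathcal{F}_{\{i-1\}}$ carries the coordinates $(x^{a}, X^{I_{1}}_{1}, \dots, X^{I_{i-1}}_{i-1})$, and $\tau^{i}_{i-1}$ is locally the obvious projection forgetting $X^{I_{i}}_{i}$. The first thing to check is that this is globally well defined, i.e.\ that the prescription is independent of the choice of adapted chart. This is the heart of the matter and amounts to inspecting the filtered transition law \eqref{eqn:tranformlaws}: for any $w\leq i-1$, the transformed coordinate $X^{I'}_{w}$ is expressed as
\[
X^{I'}_{w} \;=\; \sum_{w_{1}+\cdots+w_{n}\leq w}\frac{1}{n!}\, X^{J_{1}}_{w_{1}}\cdots X^{J_{n}}_{w_{n}}\, T^{\;\;I'}_{J_{n}\cdots J_{1}}(x),
\]
and every weight occurring on the right satisfies $w_{j}\leq w\leq i-1$. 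Hence the transition functions for the coordinates of weight $\leq i-1$ use only coordinates of weight $\leq i-1$, so they descend consistently to $\mathcal{F}_{\{i-1\}}$ and define a smooth bundle map, which is the content of the tower \eqref{eqn:tower} already recorded in the text.

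Second, I would verify the morphism condition \eqref{eqn:filtered morphisms}. Reading off the pullback along $\tau^{i}_{i-1}$ we get $(\tau^{i}_{i-1})^{*}x^{a}=x^{a}$ and $(\tau^{i}_{i-1})^{*}X^{I_{w}}_{w}=X^{I_{w}}_{w}$ for $0\leq w\leq i-1$. Each such pullback is a single weighted monomial of weight \emph{exactly} $w$, which trivially fits under the bound $w_{1}+\cdots+w_{n}\leq w$ required of a filtered bundle morphism. Therefore $\tau^{i}_{i-1}\in\mathrm{Mor}(\catname{FilB})$.

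Finally, composition. Since $\catname{FilB}$ has been established as a category (morphisms compose as smooth maps and the composition remains of the form \eqref{eqn:filtered morphisms}, as is immediate by substituting one filtered polynomial into another and noting that the total weight is sub-additive), any composite $\tau^{i}_{j}:=\tau^{j+1}_{j}\circ\cdots\circ\tau^{i}_{i-1}\colon \mathcal{F}_{\{i\}}\to\mathcal{F}_{\{j\}}$ is again a morphism of filtered bundles. There is no real obstacle here: the only substantive observation is the sub-diagonal structure of the transition law, which is precisely the filtered axiom, so the argument is essentially a bookkeeping verification once the local forms are written down.
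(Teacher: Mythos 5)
Your proposal is correct and takes essentially the same route as the paper, whose entire proof is the one-line remark that the claim ``follows directly from the local description of filtered bundles''; you have simply unpacked that local description (the sub-diagonal form of the transition law \eqref{eqn:tranformlaws} guaranteeing the projections are well defined and satisfy \eqref{eqn:filtered morphisms}). No gaps.
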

\begin{proof}
This follows directly from the local description of filtered bundles.
\end{proof}

 \subsection{Duality}
We have already recognised (see Remark \ref{r11}) filtered polynomial algebra bundles as associated with filtered bundles. Of course, they are infinite-dimensional objects but a corresponding differential geometry can be easily developed and we can view them as objects dual to filtered bundles, exactly like bundles of graded polynomial algebras
are regarded as objects  dual to graded bundles (see \cite{Grabowski:2016}). In other words, the object $\cF^*$  dual to a filtered bundle $\cF$ is the filtered polynomial algebra $\Hom_{f}(\cF,\mbA)$ bundle whose fibers are filtered morphisms $\cF_x\to\mbA$, where $\mbA=\A(\R)=\R[\ze]$ is the filtered algebra of polynomials on $\R$. Note that here we view $\mbA$ as a merely filtered space.\par
As the whole duality theory is completely parallel to that in the case of graded bundles (see \cite{Grabowski:2016}), we skip the details here. Let us only mention that the original filtered bundle $\cF$ can be reconstructed from the filtered polynomial algebra bundle $\mathbb{F}=\cF^*$
as the manifold $\Hom_{f-alg}(\mathbb{F},\mbA)$ of filtered algebra homomorphisms $\mathbb{F}_x\to\mbA$. The space of such homomorphisms for $\mathbb{F}=\cF^*$ just consists of evaluations at points of $\cF$ and carries canonically a structure of a filtered bundle. Here we view $\mbA$ as a filtered (polynomial) associative algebra. In other words the duality comes from morphisms into $\mbA$, but $\mbA$ is for the passage $\cF\to\cF^*$ viewed as a pure filtered space, and for the passage $\cF^*\to\cF$ as a filtered algebra. The proof of the following is completely parallel to that for \cite[Theorem 2]{Grabowski:2016}.
\begin{theorem}
The associations $\cF\to\Hom_f(\cF,\mbA)$ and $\mathbb{F}\to\Hom_{f-alg}(\mathbb{F},\mbA)$  induce associations of the corresponding morphisms an establish an equivalence between the categories of filtered bundles and the category of filtered polynomial algebra bundles.
\end{theorem}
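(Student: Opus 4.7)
The plan is to follow the strategy of \cite[Theorem 2]{Grabowski:2016} step by step, replacing ``graded'' by ``filtered'' throughout, and invoking Proposition \ref{prop:graded_generators} whenever we need to pick (non-canonical) homogeneous generators. There are three main tasks: showing the two associations are functors, exhibiting the fiberwise duality, and then assembling everything into a pair of natural isomorphisms.

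First, I would extend the two associations to morphisms by precomposition. A filtered bundle map $\phi:\cF\to\cG$ induces $\phi^{*}:\cG^{*}\to\cF^{*}$ fiberwise by $(f\mapsto f\circ\phi_x)$; smoothness and compatibility with the filtration follow directly from the local form (\ref{eqn:filtered morphisms}) of $\phi$, together with the fact that composing a filtered morphism with a filtered morphism is again filtered (the weight bound adds). Symmetrically, a morphism $\psi:\mathbb{F}\to\mathbb{G}$ of filtered polynomial algebra bundles yields $\psi_{*}:\mathbb{G}_{*}\to\mathbb{F}_{*}$ by precomposing algebra homomorphisms. Functoriality (preservation of identities and compositions) is then automatic.

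Next, I would prove the two basic fiberwise identifications on the model fiber $\R^{\dd}$:
\begin{align*}
\textrm{(a)} \quad & \Hom_{f}(\R^{\dd},\mbA)\;\cong\;\A(\R^{\dd}) \text{ as filtered polynomial algebras}, \\
\textrm{(b)} \quad & \Hom_{f\textrm{-}alg}(\A(\R^{\dd}),\mbA)\;\cong\;\R^{\dd} \text{ as filtered spaces}.
\end{align*}
For (a), a filtered morphism $\R^{\dd}\to\mbA=\R[\ze]$ sends a weight-$w$ coordinate $X^{I}_{w}$ to a polynomial in $\ze$ of filtered degree $\le w$; pointwise multiplication makes these morphisms into a filtered polynomial algebra canonically isomorphic to $\A(\R^{\dd})$. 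For (b), a filtered algebra homomorphism $\chi:\A(\R^{\dd})\to\mbA$ is determined by the images $\chi(X^{I}_{w})\in\mbA_{w}$, and reading off the coefficients in $\ze$ produces a tuple in $\R^{\dd}$; the filtration constraint gives precisely the filtered transition law (\ref{eqn:tranformlaws1}). Globalization over $M$ then amounts to checking that the transition functions on $\cF^{*}$ obtained from (a) are the duals of those on $\cF$, which is a direct computation using the formulae \eqref{eqn:tranformlaws} and \eqref{eqn:tranformlaws1}.

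Finally, I would package the equivalence via evaluation. The map
\[
\mathrm{ev}_{\cF}:\cF\longrightarrow\Hom_{f\textrm{-}alg}(\cF^{*},\mbA), \qquad p\longmapsto\bigl(f\mapsto f(p)\bigr),
\]
is well defined (evaluation at a point is an algebra map), smooth and filtration-preserving by the local expressions, and fiberwise bijective by step (b); the symmetric $\mathrm{ev}_{\mathbb{F}}:\mathbb{F}\to\Hom_{f}(\mathbb{F}_{*},\mbA)$ works by step (a). Naturality with respect to filtered bundle morphisms (respectively algebra bundle morphisms) is immediate from the precomposition definition of $\phi^{*}$ and $\psi_{*}$, which concludes the equivalence. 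The main obstacle is the bookkeeping in (b): unlike the graded situation where $\Hom$ spaces split strictly by weight, the filtered condition only gives an inequality, so one must verify that the subleading coefficients of $\chi(X^{I}_{w})$ are exactly the freedom needed to reconstruct the lower-weight coordinates of the point; this is the place where Proposition \ref{prop:graded_generators} is essential, since it lets one reduce the count to the graded case and interpret the remaining filtered freedom as the choice of a homogeneous splitting of the filtered algebra.
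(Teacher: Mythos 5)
Your proposal follows exactly the route the paper takes: the paper's entire proof consists of the remark that the argument is ``completely parallel'' to the graded duality theorem of \cite{Grabowski:2016}, with the two associations realised fibrewise via evaluation maps and the choice of homogeneous generators supplied by Proposition \ref{prop:graded_generators}. Your write-up simply makes those deferred steps explicit (functoriality by precomposition, the fibrewise identifications, and the evaluation isomorphisms), so it is correct and coincides with the paper's approach.
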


\subsection{Bachelor--Gaw\c{e}dzki like-theorems}
\begin{proposition}
There exists a canonical functor from the category of filtered bundles to the category of graded bundles which takes a filtered bundle $\cF$ of degree $k$ and produces a graded bundle $\Gr(\cF)$ of degree $k$, which isomorphic to $\cF$ as a filtered bundle.
\end{proposition}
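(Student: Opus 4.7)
The plan is to construct $\Gr(\cF)$ from the filtered polynomial algebra bundle $\A(\cF)$ of Remark \ref{r11} by passing fiberwise to the associated graded algebra bundle $\bigoplus_i \A_i(\cF)/\A_{i-1}(\cF)$. By Proposition \ref{prop:graded_generators}, each fiber is a graded polynomial algebra, so by the duality theorem one extracts a graded bundle $\Gr(\cF)$ over $M$ of the same rank as $\cF$. Equivalently, working with the adapted atlas of $\cF$: the transition functions \eqref{eqn:tranformlaws} decompose uniquely as a sum of weight-preserving top-order terms (those with $w_{1}+\cdots+w_{n}=w$) plus strictly lower-weight corrections; keeping only the weight-preserving parts yields precisely a cocycle of the form \eqref{eqn:translawsGr} defining a graded bundle of degree $k$. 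The cocycle condition survives truncation because lower-weight remainders in a composition produce only lower-weight remainders, leaving the top-weight part independent.

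Functoriality is handled analogously. A morphism $\phi:\cF\to\cG$ is locally of the form \eqref{eqn:filtered morphisms}, and retaining only its weight-preserving terms gives a well-defined graded-bundle morphism $\Gr(\phi):\Gr(\cF)\to\Gr(\cG)$. The identities $\Gr(\mathrm{id})=\mathrm{id}$ and $\Gr(\phi\circ\psi)=\Gr(\phi)\circ\Gr(\psi)$ follow by the same weight-truncation argument, producing the functor $\Gr:\catname{FilB}[k]\to\catname{GrB}[k]$.

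For the filtered isomorphism $\cF\cong\Gr(\cF)$, I would argue by induction on the degree $k$, using the tower \eqref{eqn:tower}. The case $k=1$ reduces to the known fact that an affine bundle is non-canonically isomorphic to its model vector bundle once one chooses a global section, which always exists by partition of unity on $M$. For the inductive step, assume a filtered isomorphism $\psi_{k-1}:\cF_{\{k-1\}}\to\Gr(\cF)_{\{k-1\}}$. By Theorem \ref{T0}, the vector bundle model of the affine projection $\tau^k_{k-1}:\cF\to\cF_{\{k-1\}}$ is a pullback $(\tau^{k-1}_0)^{*}V_k$ of a vector bundle $V_k\to M$, which is also the model of the top stratum of $\Gr(\cF)$. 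Choosing a global section of $\tau^k_{k-1}$ identifies $\cF$ with this pullback as a bundle over $\cF_{\{k-1\}}$; composing with the pullback of $\psi_{k-1}$ then produces the desired filtered isomorphism $\psi_k:\cF\to\Gr(\cF)$.

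The main obstacle is verifying that the inductively assembled $\psi_k$ is a genuine morphism of \emph{filtered} bundles, and not merely a fibrewise diffeomorphism that matches on the top graded stratum. This is ultimately bookkeeping: since the filtered structure is entirely captured by the tower \eqref{eqn:tower}, compatibility at each successive affine step is enough, but one must check that the splitting of $\tau^k_{k-1}$ can be arranged to respect the lower-weight data already fixed by $\psi_{k-1}$. The space of such admissible splittings is itself an affine bundle modelled on a vector bundle pulled back from $M$ (again by Theorem \ref{T0}), so partitions of unity on $M$ suffice to resolve all coherence issues and to complete the construction of $\psi_k$.
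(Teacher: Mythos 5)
Your construction of the functor $\Gr$ --- truncating the transition functions and the morphisms to their weight-preserving top-order parts and checking that the cocycle condition survives because lower-weight remainders under composition only produce lower-weight remainders --- is exactly the paper's argument. Where you genuinely diverge is in proving the non-canonical isomorphism $\cF\simeq\Gr(\cF)$. The paper works algebraically: it passes to the associated graded algebra $\Gr(\A(\cF))$ of the filtered function algebra, observes that each quotient $\A_{i+1}(\cF)/\A_{i}(\cF)$ is the section module of a finite-rank vector bundle, and chooses global homogeneous functions representing generators of these quotients to serve as new fibre coordinates. You instead work geometrically and inductively up the affine tower (\ref{eqn:tower}), splitting each affine fibration $\tau^{i}_{i-1}$ by a global section. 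This is a legitimate alternative, but the one point you must not gloss over is that an arbitrary smooth section of $\tau^{k}_{k-1}$ will not do: subtracting it yields fibre coordinates that transform linearly over $\cF_{\{k-1\}}$ but need no longer be weighted polynomials in the lower coordinates, so one obtains only a vector-bundle identification, not a filtered one. You need a section that is, in every adapted chart, a weighted polynomial of degree $\le k$ in the lower coordinates; such sections exist locally (the zero section of a chart), their differences are polynomial sections of the model bundle $(\tau^{k-1}_{0})^{*}V_{k}$ and form a $C^{\infty}(M)$-module, hence they glue by a partition of unity on $M$ --- which is what your final paragraph gestures at, although describing this space as ``an affine bundle modelled on a vector bundle pulled back from $M$'' is not quite the right formulation (it is an affine space over the $C^{\infty}(M)$-module of polynomial sections). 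With that made precise, subtracting the polynomial section kills the inhomogeneous part of the top-degree transition law and the induction closes. The two routes are essentially dual --- the paper splits the filtration of the algebra, you split the tower of the bundle --- and both rest on the same partition-of-unity freedom; yours is more elementary and stays in coordinates, while the paper's makes the relation to the associated graded algebra, and hence to the general Batchelor-type splitting mechanism, explicit.
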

\begin{proof}
We will prove the existence of such a functor by explicit construction. The idea is that from an atlas of filtered bundle of degree $k$ we an canonically associate an atlas of a graded bundle of degree $k$ by dropping the inhomogeneous pieces of the transition functions. On  $\mathcal{F}_{\{k\}}$ we employ natural local coordinates $(x^{a}, X^{I}_{w})$ together with the  fibre transformation law
$$X_{w}^{I'} = \sum_{w_{1} + \cdots + w_{n}\leq w} \frac{1}{n!} X^{J_{1}}_{w_{1}} X^{J_{2}}_{w_{2}} \cdots X^{J_{n}}_{w_{n}}T_{J_{n} \cdots J_{2} J_{1}}^{\hspace{30pt} I'}(x).$$
We then define the associated graded bundle, $F_{k}$ say,  by defining local coordinates $(x^{a}, y_{w}^{I})$ together with the fibre transformation law
\be\label{gr} y_{w}^{I'} = \sum_{w_{1} + \cdots + w_{n} = w} \frac{1}{n!}y^{J_{1}}_{w_{1}} y^{J_{2}}_{w_{2}} \cdots y^{J_{n}}_{w_{n}}T_{J_{n} \cdots J_{2} J_{1}}^{\hspace{30pt} I'}(x)\,.
\ee
The only thing that one has to verify is  that the cocycle condition remains true. However, this follows directly as the composition of graded affine transformations is again a graded affine transformation, and moreover any condition on such transformations can be examined order-by-order in weight (this is tightly related to the fact that we have a very particular tower of affine fibrations). Importantly, the lower order inhomogeneous terms in the transformation laws do not effect the cocycle condition for the homogeneous terms -- this can easily be seen as terms of a given weight cannot be sent to terms of a higher weight under graded affine transformations. This means that disregarding the inhomogeneous terms in the transformations laws does \emph{not} effect the cocycle condition for the remaining homogeneous terms. \par
In this way we build a graded bundle atlas for $F_{k}$, which is still a species of polynomial bundle. As a matter of formality, we will denote the association of a graded bundle with a filtered bundle as
$$\textnormal{Gr}(\mathcal{F}_{\{k\}}) = F_{k}.$$
Consider a morphism $\phi : \mathcal{F}_{\{k\}}  \rightarrow \mathcal{G}_{\{l\}} $  between filtered bundles (not necessarily of the same degree). We then define $\textnormal{Gr}(\phi)$ in terms of local coordinates by once again dropping the inhomogeneous pieces, see (\ref{eqn:filtered morphisms}). It is now a simple matter to see that the functorial properties are respected:
\begin{itemize}
\item Clearly $\textnormal{Gr}(\Id_{\mathcal{F}}) =  \Id_{\textnormal{Gr}(\mathcal{F})}$, and so identities are respected.
\item $\textnormal{Gr}(\psi \circ \phi) = \textnormal{Gr}(\psi) \circ \textnormal{Gr}(\phi) $, where $\psi: \mathcal{G} \rightarrow \mathcal{H}$, follows from the fact that in local coordinates morphisms are polynomials in the fibred coordinates and respect the degree.
\end{itemize}
Thus, we obtain a functor $\textnormal{Gr} : \catname{FilB} \rightarrow \catname{GrB}$, which can be restricted to the subcategories of filtered bundles and graded bundles  both of degree $k \in \mathbb{N}$.\par
 To show that $\Gr(\cF_{\{k\}})$ is isomorphic with $\cF_{\{k\}}$ as a filtered bundle, we will show that we can find local coordinates $(x^{a}, y_{w}^{I})$ on $\cF_{\{k\}}$ which transform as in (\ref{gr}).
We can do it passing to the graded algebra $\Gr(\A(\cF_{\{k\}}))$ associated canonically with the filtered algebra $\A(\cF_{\{k\}})$. Let us recall that the graded associative algebra associated with the filtration $\A=\cup_i\A_i$ is the graded space
$$\Gr(\A)=\A_0\oplus\bigoplus_i(\A_{i+1}/A_i)$$
with the obvious induced associative algebra structure. In our situation, the space $\A_{i+1}/A_i$ is the space of sections of a finite-dimensional vector bundle and the canonical map $\A\to\Gr(\A)$ is an isomorphism of filtered algebras. It is easy to see that $\Gr(\A(\cF_{\{k\}})$ is the polynomial algebra of functions on $\Gr(\cF_{\{k\}})$, so in every coordinate chart of $\cF_{\{k\}}$ we can choose homogeneous coordinates which, according to $\A\simeq\Gr(\A)$ correspond to global homogeneous functions on $\Gr(\cF_{\{k\}})$.
\end{proof}
\begin{remark}
The association $\cF_{\{k\}}\mapsto \Gr(\cF_{\{k\}})$ of a graded bundle with a filtered bundle is  unambiguous in the sense that the functor described above is canonical. What is non-canonical is the isomorphism $\cF_{\{k\}}\simeq \Gr(\cF_{\{k\}})$ which depends on the choice of `homogeneous coordinates'.
\end{remark}
It is important to note that the canonical functor $\textnormal{Gr}$  does \emph{not} give an equivalence of categories, simply  because it is not faithful. That is, as we drop parts of the initial transformation laws for the local coordinates on the filtered bundle, there is no way to recover them from the associated graded bundle. Namely, it is clear that we could have $\Gr(\phi)=\Gr(\psi)$ for morphisms $\phi,\psi:\cF_{\{k\}}^1\ra\cF_{\{k\}}^2$ which are different. It is enough that they have the same `highest order' terms.
\begin{example}
Consider an automorphism $\phi\ne Id$ of the filtered bundle $\R^{(1,1)}$ with coordinates $y,z$ of degree, respectively, 1 and 2:
$$\phi(y,z)=(y,z+y)\,.$$
It is clear that $\Gr(\phi)= Id$.
\end{example}
In fact the existence of a canonical functor provides a partial Bachelor--Gaw\c{e}dzki like-theorem. We have found coordinates on a graded bundle  associated with coordinates on a filtered bundle and a diffeomorphism between the two bundles --  the diffeomorphism in these special coordinates is simply the identity. That is, we can make the direct identification `$X =y$' and understand the filtered bundle and graded bundle to be the same as manifolds.  Thus we have the following theorem.
\begin{theorem}[Partial Bachelor--Gaw\c{e}dzki like-theorem]\label{thm:PBG}
Any filtered bundle  of degree $k$ is  isomorphic, but non-canonically, to a graded bundle of degree $k$
\end{theorem}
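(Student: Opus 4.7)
My plan is to essentially read off this theorem from the preceding proposition, since what was proven there already contains the core content: for every filtered bundle $\cF_{\{k\}}$ the associated graded bundle $\Gr(\cF_{\{k\}})$ is isomorphic to $\cF_{\{k\}}$ as a filtered bundle. The remaining task is just to package that statement as a Bachelor--Gaw\c{e}dzki-style result and to make explicit in what sense the isomorphism is non-canonical.

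First I would take as input any filtered bundle atlas on $\cF_{\{k\}}$, giving adapted coordinates $(x^a,X^I_w)$ with transition functions of the form \eqref{eqn:tranformlaws}. Applying the functor $\Gr$ from the previous proposition produces a graded bundle $\Gr(\cF_{\{k\}})$ of degree $k$ whose transition functions \eqref{gr} are obtained by deleting all inhomogeneous lower-weight pieces. By the previous proposition, via the canonical isomorphism $\A(\cF_{\{k\}})\simeq\Gr(\A(\cF_{\{k\}}))$ of filtered algebras one can select, in each filtered chart, new local coordinates $(x^a,y^I_w)$ on $\cF_{\{k\}}$ itself which transform exactly by the purely homogeneous rule \eqref{gr}. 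The identification $X^I_w\leftrightarrow y^I_w$ is then an isomorphism of filtered bundles $\cF_{\{k\}}\simeq\Gr(\cF_{\{k\}})$, since the coordinate expressions on the two sides agree; this gives the desired isomorphism.

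To justify that the isomorphism is non-canonical, I would exhibit the dependence on choices at the level of the algebraic step $\A\simeq\Gr(\A)$: picking homogeneous generators in $\A_{i+1}/\A_i$ and lifting them to $\A_{i+1}$ requires, at each degree, the choice of a splitting of the short exact sequence $0\to\A_i\to\A_{i+1}\to\A_{i+1}/\A_i\to 0$, and different splittings yield different coordinate systems $(y^I_w)$ on $\cF_{\{k\}}$, hence genuinely different isomorphisms with $\Gr(\cF_{\{k\}})$. The small example of the automorphism $\phi(y,z)=(y,z+y)$ of $\R^{(1,1)}$ already preceding the theorem makes this non-uniqueness visible.

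The only mildly delicate point is verifying that locally chosen splittings patch together to give global homogeneous coordinates on $\cF_{\{k\}}$; but this is already handled by the isomorphism $\A(\cF_{\{k\}})\simeq\Gr(\A(\cF_{\{k\}}))$ of filtered algebras of \emph{global} polynomial functions established in the previous proposition, so there is no genuine obstacle and no further cocycle computation is required. Thus the theorem follows immediately from the construction of $\Gr$.
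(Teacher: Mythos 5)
Your proposal is correct and follows essentially the same route as the paper: the theorem is read off directly from the preceding proposition, whose proof already produces, via the filtered-algebra isomorphism $\A(\cF_{\{k\}})\simeq\Gr(\A(\cF_{\{k\}}))$, local coordinates on $\cF_{\{k\}}$ transforming by the purely homogeneous rule \eqref{gr}, so that the identification $X=y$ gives the (non-canonical) isomorphism. Your added discussion of non-canonicity via the choice of splittings matches the paper's accompanying remark; the only minor quibble is that the example $\phi(y,z)=(y,z+y)$ is used in the paper to show non-faithfulness of $\Gr$ rather than non-canonicity of the isomorphism, but this does not affect the argument.
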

As any filtered bundle is isomorphic to a graded bundle and, in turn, any graded bundle is  isomorphic to a graded vector bundle (a Whitney sum of vector bundles), i.e., a \emph{split graded bundle} \cite{Bruce:2014}, we have the following theorem.
\begin{theorem}[Bachelor--Gaw\c{e}dzki like-theorem]\label{BG}
Any filtered bundle is non-canonically  isomorphic to a split graded bundle.
\end{theorem}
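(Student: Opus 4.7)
My plan is to obtain the conclusion by composing two non-canonical isomorphisms that have already been produced in the paper and in \cite{Bruce:2014}; no new construction is needed. The content of Theorem \ref{BG} is simply that these two identifications can be chained together.

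First, given a filtered bundle $\cF$ of degree $k$, I would invoke Theorem \ref{thm:PBG} to obtain a (non-canonical) isomorphism of filtered bundles $\Phi : \cF \to \Gr(\cF)$, where $\Gr(\cF)$ is regarded as a graded bundle of degree $k$ via the canonical functor $\Gr$ constructed in the preceding proposition. The map $\Phi$ corresponds, in the homogeneous coordinates obtained from the filtered-to-graded passage on $\A(\cF)$, to the formal identification of the weighted fibre coordinates on $\cF$ with the graded fibre coordinates on $\Gr(\cF)$.

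Next, I would apply the splitting theorem for graded bundles from \cite{Bruce:2014}, asserting that any graded bundle of degree $k$ is (non-canonically) isomorphic, in the category $\catname{GrB}$, to a split graded bundle $V$, i.e.\ a Whitney sum $V_{1} \oplus_{M} V_{2} \oplus_{M} \cdots \oplus_{M} V_{k}$ of vector bundles over $M$ with fibre coordinates of weights $1,2,\dots,k$ respectively. This yields a non-canonical isomorphism $\Psi : \Gr(\cF) \to V$ of graded bundles. Because graded bundles form a (not full) subcategory of $\catname{FilB}$, the map $\Psi$ is automatically an isomorphism of filtered bundles as well. Composing, one obtains $\Psi \circ \Phi : \cF \to V$, which is the required isomorphism of filtered bundles onto a split graded bundle.

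I do not expect a genuine obstacle here: both ingredients are already established, and the argument is purely a functorial composition. The only minor points to verify are that an isomorphism in $\catname{GrB}$ really does count as an isomorphism in $\catname{FilB}$ (this is immediate from the shape of the transition laws, since a weight-preserving change of coordinates is a special case of a graded-affine change of coordinates), and that the resulting isomorphism inherits the non-canonical character from both steps — the choice of homogeneous coordinates in Theorem \ref{thm:PBG} and the choice of splitting of the tower of affine fibrations in the Bachelor--Gaw\c{e}dzki-type result for graded bundles.
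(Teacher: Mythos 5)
Your proposal is correct and follows exactly the route the paper takes: the theorem is stated as an immediate consequence of Theorem \ref{thm:PBG} composed with the splitting result for graded bundles from \cite{Bruce:2014}, which is precisely your $\Psi\circ\Phi$. Your added remark that a graded-bundle isomorphism is automatically a filtered-bundle isomorphism is a worthwhile explicit check that the paper leaves implicit.
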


\begin{example}\label{exp:pplane}
Sometimes a filtered bundle is canonically partially split, that is we have a canonical graded bundle structure. For example, consider the manifold $\sJ^2_0(\R^p, M)$, elements of which are known as $(p,2)$-velocities. One can uncover the filtered structure here by considering local coordinates that are inherited from local coordinates on $M$.  Thus, it is natural to consider coordinates $(x^a, x^b_\mu , x^c_{\nu \lambda})$, where $\mu \in [1,p]$ and we take $\nu \leq \lambda $.  The admissible changes of local coordinates are of the form
\begin{align*}
& x^{a'} = x^{a'}(x), && x^{b'}_\mu = x^a_\mu \left(\frac{\partial x^{b'}}{\partial x^a} \right)\\
 &x^{c'}_{\nu \lambda} = x^b_{\nu \lambda}\left(\frac{\partial x^{c'}}{\partial x^b} \right)  + \frac{1}{2!}x^b_\nu x^d_\lambda \left (  \frac{\partial^2 x^{c'}}{\partial x^d \partial x^b} \right). &&
\end{align*}
Thus we can now treat $\mu \in [1,p]$ as the label \emph{defining} the weight. That is, we assign weight zero to $x^a$, $\mu$ to $x^b_\mu$ and $\nu+\lambda$ to $x^c_{\nu \lambda}$. With this assignment of weight we obtain a graded bundle of degree $2p$.  Almost identical considerations show that  $\sJ^k_0(\R^p, M)$ is a graded bundle of degree $kp$, and in particular $\sT^kM := \sJ^k_0(\R, M)$ is canonically a graded bundle of degree $k$.
\end{example}

\subsection{Towers of affine bundles vs filtered bundles}
\begin{definition}\label{def:affinetower}
A tower of fibre bundles of height $k$
\be\label{tower}B := B_k \stackrel{\tau_k}{\longrightarrow} B_{k-1} \stackrel{\tau_{k-1}}{\longrightarrow}  \cdots \stackrel{\tau_3}{\longrightarrow}   B_2 \stackrel{\tau_2}{\longrightarrow} B_{1 }\stackrel{\tau_1}{\longrightarrow} B_0 =: M\,,
\ee
is said to be an \emph{affine tower} of height $k$ if each level $B_i \stackrel{\tau_i}{\longrightarrow}B_j$ is an affine bundle.
\end{definition}
\begin{example}
An affine tower of height zero  is just a manifold $B_0 = :M$.
\end{example}
\begin{example}
An affine tower of height $1$, $\tau_1 : B_1 \longrightarrow M$ is just a `standard' affine bundle over $M$. Thus we make the identification $B_1 \cong \mathcal{F}_{\{1\}}$ via the obvious assignment of weight to the fibre coordinates.
\end{example}
\begin{example}
A filtered bundle of degree $k$ leads to an affine tower of height $k$,  see (\ref{eqn:tower}).
\end{example}
Clearly, a typical affine tower does not come from a filtered bundle.
\begin{definition}\label{def:filteredtower}
An affine tower of height $k$  is said to be a \emph{filterable tower} if it is isomorphic to a  tower of a filtered bundle, i.e., there exists a bundle isomorphism $\varphi: B_k  \rightarrow \mathcal{F}_{\{ k\}}$  (over  the same base $B_0 \cong F_{\{0\}} := M$)  commuting with all the projections $\tau_i$ in the towers.
\end{definition}
\noindent Note that we do not expect any such isomorphism to be unique nor in any way  canonical.\par

 It is completely clear that an affine tower is filterable if and only if it admits a \emph{filtered atlas}, i.e., an atlas with local coordinates constructed from coordinates in $M$ and affine coordinates $y_w$ in the fibers of $\zt_w:B_w\to B_{w-1}$, viewed as coordinates of degree $w$, such that the transition functions are weighted polynomials respecting the degree in the sense that coordinates of degree $\le i$ can be written as polynomials of weight $\le i$ with respect to the other set of coordinates.
\begin{example}
A general affine tower of height $2$, $ B_2 \longrightarrow B_1 \longrightarrow M$ admits adapted local  affine coordinates $(x^a, y^i , z^\alpha)$ and the admissible changes of coordinates are of the form
\begin{align*}
& x^{a'} = x^{a'}(x),&& y^{i'} = y^j T_j^{\:\: i'}(x) + T^{i'}(x),
&    z^{\alpha'} = y^\beta T_\beta^{\:\: \alpha'}(x,y) + T^{\alpha'}(x,y).
\end{align*}
Note that we have no further conditions on the coordinate transformations in general -- by comparison with  \eqref{eqn:tranformlaws} we see that these structures are more general than filtered bundles (also see \eqref{eqn:tower}). We can find a filtered atlas for the tower if we can choose coordinates so that
$T_\beta^{\:\: \alpha'}$ depends only on $x$ and $T^{\alpha'}(x,y)$ is of degree $\le 2$, i.e. a quadratic polynomial with respect to $y$.
\end{example}
The question of how to recognise a filterable tower,  which is an important step with identifying jet bundles, can be addressed \emph{inductively} level-by-level. At the first level there is nothing  much to do. We have a standard affine bundle $B_1 = \mathcal{F}_{\{ 1\}}$ and so by definition we are given an atlas consisting of charts adapted to the fibre bundle structure that consist of coordinates $\big(x^a , X^i_1 \big)$, such that the admissible changes of coordinates are of the form
\begin{align*}
& x^{a'} = x^{a'}(x), && X^{i'}_1 =  X^j_1T_j^{\:\: i'}(x) + T^{i'}(x).
\end{align*}
Thus, we can canonically assign a weight of zero to the base coordinates $x$ and one to the affine fibre coordinates $X_1$. The important observation is that within the space of functions on $B_1 = \mathcal{F}_{\{1\}}$, due to the existence of the privileged charts, there are the functions that are  weighted polynomials  with respect to the weight of the fibre coordinates (see Definition \ref{def:weightedpoly}). \par
Now consider the second level $B_{2} \rightarrow \mathcal{F}_{\{1\}}$, which by assumption is an affine bundle. Then $B_{2} \simeq \mathcal{F}_{\{ 2\}}$ if and only if there exists an atlas of $B_2$ consisting of charts adapted to the fibre bundle structure that consist of coordinates $\big(x^a, X^i_1 , X^l_2 \big)$, such that the admissible changes of coordinates are as above, together with
$$X^{l'}_2 =  X^m_2 T_m^{\:\: l'}\big (x, X_1 \big) + T^{l'}\big(x, X_1\big),$$
such that:
\begin{enumerate}
\item $T_m^{\:\: l'}\big (x, X_1 \big ) = T_m^{\:\: l'}(x)$, i.e. we have \emph{degree zero} weighted polynomials with respect to the weight of $X_1$;
\item  $T^{l'}\big(x, X_1  \big)$ are  \emph{degree two} weighted polynomials with respect to the weight of $X_1$.
\end{enumerate}
We thus naturally assign a weight of two to the coordinates $X_2$ and obtain a filtered bundle.\par
The situation for the third level is clear. The essential step is that we now have a well defined notion of weighted polynomials on $B_2 \simeq \mathcal{F}_{\{2 \}}$ and so can make sense of weighted polynomials of any integer degree, specifically zero and three. That is,  on the affine bundle $B_3 \rightarrow \mathcal{F}_{\{2 \}}$, we require the existence of local coordinates such that the non-linear term in the affine changes of coordinates are  weighted polynomials of degree $3$.  Then naturally we can assign a weight of $3$ to the respective fibre coordinates,  and so we can make the identification $B_3 \simeq \mathcal{F}_{\{ 3\}}$.  And then we continue this process until we reach the top level. This then reduces the question to the study of an affine bundle over a filtered bundle and the existence of a `reduced' or `refined' atlas compatible with the lower filtered structure.
\begin{theorem}\label{T1} A tower of affine fibrations (\ref{tower}) is filterable if and only if, for $i=2,\dots, k$, the model vector bundle
$\sv(\tau_i):\sv(B_i)\to B_{i-1}$ of the affine fibration $\tau_i:B_i\to B_{i-1}$ is the pull-back bundle
$(\tau^{i-1}_0)^*V_i$ of a certain vector bundle $\zs_i:V_i\to M$ for all $i=2,\dots, k$. Here, $\tau^{i-1}_0:B_{i-1}\to B_0=M$ is the canonical projection of $B_{i-1}$ onto $M$.
\end{theorem}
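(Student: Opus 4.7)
The two implications are handled separately. The forward direction is essentially immediate from Theorem \ref{T0}: given an isomorphism $\varphi: B_k \to \mathcal{F}_{\{k\}}$ intertwining all tower projections, Theorem \ref{T0} applied to $\mathcal{F}_{\{k\}}$ supplies vector bundles $V_i \to M$ with $\sv(\mathcal{F}_{\{i\}}) = (\tau^{i-1}_0)^* V_i$, and $\varphi$ transports these identifications level by level to give $\sv(B_i) = (\tau^{i-1}_0)^* V_i$ for $i=2,\dots,k$.

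For the converse I would induct on the height of the tower. The base case reduces to the single affine bundle $B_1 \to M$, which is canonically $\mathcal{F}_{\{1\}}$ once its fibre coordinates are assigned weight $1$. For the inductive step, assume $B_{i-1}$ already carries a filtered bundle structure of degree $\le i-1$ built from the inductive hypothesis applied to the truncated tower $B_{i-1} \to \dots \to M$. The task is to extend this to a filtered structure on $B_i$, i.e.,\ to produce fibre coordinates $X_i$ of weight $i$ on $\tau_i: B_i \to B_{i-1}$ whose transition functions are weighted polynomials of degree $\le i$ in the filtered coordinates of $B_{i-1}$.

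The key observation is that every smooth affine bundle over a paracompact base admits a global section, obtained by patching local zero sections with a partition of unity. Picking such a section $\sigma: B_{i-1} \to B_i$ identifies $\tau_i$ with its model vector bundle $\sv(\tau_i): \sv(B_i) \to B_{i-1}$, yielding an atlas of $B_i$ in which all fibre transitions are linear. The pull-back hypothesis $\sv(B_i) = (\tau^{i-1}_0)^* V_i$ lets us refine further: trivialising $V_i$ over $M$ and pulling back via $\tau^{i-1}_0$ produces trivialisations of $\sv(B_i)$ whose transition matrices $T^{\,l'}_{\alpha\beta, m}$ depend only on the base coordinates $x^a$ on $M$. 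In the corresponding fibre coordinates $X_i$ on $B_i$ the transitions take the purely linear form
\[
X_{i,\beta}^{l'} = X_{i,\alpha}^m\, T_{\alpha\beta, m}^{l'}(x),
\]
a homogeneous polynomial of weight exactly $i$ once $X_i$ is declared to carry weight $i$. Combined with the filtered atlas on $B_{i-1}$, this gives a filtered atlas of degree $\le i$ on $B_i$, and iterating up to $i=k$ produces the required isomorphism $B_k \simeq \mathcal{F}_{\{k\}}$.

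The only delicate point is checking that the refinements performed at level $i$ do not disturb the filtered structure already built on $B_{i-1}$. This is automatic because both the global section $\sigma$ and the linear fibre-trivialisations act only on the fibres of $\tau_i$, leaving the coordinates on $B_{i-1}$ untouched; at worst one passes to a common refinement of the trivialising cover of $V_i$ and the filtered cover of $B_{i-1}$, which is a routine cover-refinement step. The construction incidentally produces a filtered atlas that is split in the sense of Theorem \ref{BG}, but this strengthening is not needed for the statement.
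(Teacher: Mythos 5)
Your proposal is correct and follows essentially the same route as the paper: necessity is read off from Theorem \ref{T0}, and sufficiency is proved inductively by choosing a global section of each affine fibration $\tau_i$ (which exists since the fibres are contractible) to identify $B_i$ with $\sv(B_i)$, and then invoking the pull-back condition to obtain linear transition coefficients depending only on $x\in M$, hence a filtered (indeed split) atlas. Your version merely spells out the cover-refinement and the pulled-back trivialisations of $V_i$ a little more explicitly than the paper does.
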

\begin{proof}
This is a necessary condition according to Theorem \ref{T0}. As for sufficiency we can proceed
inductively, choosing a section of the affine fibration $\tau_i:B_i\to B_{i-1}$ (which always exists as the fibers are contractible), we can identify $B_i$ with $\sv(B_i)$. The transition functions in fibers of $\sv(B_i)\to B_{i-1}$ are the linear parts of the affine transition functions in fibers of $\tau_i:B_i\to B_{i-1}$, so the tower is filterable if and only if we can choose the coefficients of these linear transformations depending only on $x\in M$ (they generally depend on $y\in B_{i-1}$). This mean exactly that the identification of affine fibers determined by the coordinate system survives along fibers of the projection $B_{i-1}\to M$ under the changes of coordinates. In other words, the vector bundle $\sv(B_i)\to B_{i-1}$ must be the pull-back along the projection $B_{i-1}\to M$ of a vector bundle $V_i$ over $M$.
\end{proof}
\begin{theorem} \label{T2} A filterable tower of affine fibrations determines the filtered bundle up to an isomorphism.
\end{theorem}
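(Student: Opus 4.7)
The plan is to show that any two filtered refinements of a given filterable tower yield isomorphic filtered bundles, by a short argument that factors through the split graded model. Let $\cF^{(1)}$ and $\cF^{(2)}$ be filtered bundles of degree $k$ obtained from the tower (\ref{tower}) via two different filtered atlases, both fitting into the identification of (\ref{eqn:tower}) with (\ref{tower}).

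First, I would extract from the tower alone, without reference to any filtered atlas, the sequence of vector bundles $V_1,\dots,V_k$ over $M$ guaranteed by Theorem \ref{T1}: for each $i$, the model vector bundle $\sv(\tau_i):\sv(B_i)\to B_{i-1}$ of the affine fibration $\tau_i$ is the pull-back $(\tau^{i-1}_0)^{*}V_i$ of a vector bundle $\sigma_i:V_i\to M$. Because the fibers of $\tau^{i-1}_0$ are contractible, pull-back along $\tau^{i-1}_0$ is injective on isomorphism classes of vector bundles, so each $V_i$ is uniquely determined up to isomorphism, and the data $(V_1,\dots,V_k)$ are common to both $\cF^{(1)}$ and $\cF^{(2)}$.

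Next, I would apply the Bachelor--Gaw\c{e}dzki-like Theorem \ref{BG} to each $\cF^{(a)}$, obtaining filtered-bundle isomorphisms
$$\cF^{(1)}\;\stackrel{\sim}{\longrightarrow}\; V_1\op_M V_2\op_M\cdots\op_M V_k \;\stackrel{\sim}{\longleftarrow}\; \cF^{(2)},$$
with the same split graded bundle on both sides. Composing the two yields the required isomorphism $\cF^{(1)}\to\cF^{(2)}$ in the category $\catname{FilB}$. Since every morphism of filtered bundles preserves weight, the composition automatically intertwines the canonical towers (\ref{eqn:tower}) on the two sides and therefore commutes with the identifications of these towers with (\ref{tower}).

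The main obstacle I foresee is verifying that the split graded bundles produced by Theorem \ref{BG} for $\cF^{(1)}$ and $\cF^{(2)}$ truly are built from the \emph{same} vector bundles $V_i$, rather than from atlas-dependent surrogates. This is resolved by Theorem \ref{T1}: the $i$-th homogeneous component in the splitting is, up to isomorphism, the model vector bundle of the affine fibration $\tau_i$, and the latter is the pull-back of the uniquely determined $V_i$ coming from the tower. With this coherence in place, the composite map is well defined and the argument closes.
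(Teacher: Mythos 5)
Your proposal is correct and follows essentially the same route as the paper's proof: reduce both filtered refinements to the split graded model $V_1\oplus_M\cdots\oplus_M V_k$ via Theorem \ref{BG}, and observe that the $V_i$ are pinned down (up to isomorphism) by the tower itself through the pull-back condition $\sv(B_i)=(\tau^{i-1}_0)^*V_i$ from Theorem \ref{T1}. Your extra remark that pull-back along $\tau^{i-1}_0$ is injective on isomorphism classes because the affine fibers are contractible is a welcome justification of a step the paper leaves implicit.
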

\begin{proof} According to our version of Bachelor--Gaw\c{e}dzki theorem \ref{BG}, any filtered bundle associated with a filterable tower of affine fibrations is isomorphic with a graded vector bundle $V_1\oplus_M\cdots\oplus_MV_k$ for some vector bundles $V_i$ over $M$. The proof of Theorem \ref{T1} shows that $V_i$ satisfy the condition
$$\sv(B_i)=(\tau^{i-1}_0)^*V_i\,.$$
But this determines $V_i$ up to isomorphisms if only $B_i$ is given.
\end{proof}
\begin{example}
It is well know that for  smooth maps between two manifolds $M$ and $N$ that  $$\tau_{k-1}^k : \sJ^k (M,N)  \longrightarrow \sJ^{k-1}(M,N) $$
is an affine bundle whose model vector bundle is the pullback of $\sT N \otimes \textnormal{S}^k \sT^*M$ over $J^{k-1}(M,N)$  (see  \cite[Proposition 12.11.]{Kolar:1993}). Thus, the tower of affine bundles
$$\sJ^k(M,N) \stackrel{\tau^{k}_{k-1}}{\xrightarrow{\hspace*{20pt}} } \sJ^{k -1}(M,N)  \stackrel{\tau^{k-1}_{k-2}}{\xrightarrow{\hspace*{20pt}} } \sJ^{k -2}(M,N)   \longrightarrow  \cdots \longrightarrow \sJ^1(M,N)  \stackrel{\tau^{1}_{0}}{\xrightarrow{\hspace*{15pt}} } \sJ^0(M,N) := N \times M,$$
is, via Theorem \ref{T1}, filterable. In cosequence, via Theorem \ref{T2}, $ \sJ^k (M,N)$ is a filtered bundle. From Example \ref{exp:jetbundles} taking a trivial bundle we see that $\sJ^1(M,N) \simeq \sT N \otimes \sT^* M$ and so we canonically have a vector bundle at the lowest level (this can of course be deduced in several different ways).
 \end{example}
\begin{coexample}
Let $M$ be a $m+n$ dimensional smooth manifold. Then for any $n$-dimensional submanifold $N \subset M$ we can define $[N]_x^k$ as the $k$-th jet of $N$ at $x\in M$.  That is, we consider the set of $n$-dimensional submanifolds of $M$ that are in contact to order $k$ at $x$. This can be defined using local parametrisations $\psi : \R^n \rightarrow M $ of $N$ and $\psi^\prime :\R^n \rightarrow M$ of $N^\prime$ such that $\psi(0) = \psi^\prime(0)$, and then considering the jets of these maps at zero (see for example \cite[page 124]{Kolar:1993}).  We then define the \emph{k-jet of submanifolds of $M$} as
$$\sJ_n^k(M) := \bigcup_{x \in M} \sJ^k_n(M)_x, $$
where $\sJ^k_n(M)_x := \left \{ [N]_x ^k ~|~ x \in M   \right \}.$  It is well known that there is a series of bundle structures here given by reducing the order of contact
$$\tau_m^l  : \sJ_n^l(M) \longrightarrow \sJ_n^m(M),$$
for $l < m \leq k$.  Here we  define $\sJ_n^0(M) := M$ as standard.  In particular, we have affine fibre bundle structures
$$\tau_{l-1}^l  : \sJ_n^l(M) \longrightarrow \sJ_n^{l-1}(M),$$
 provided $l>1$. The  $l=1$ case needs special attention,
$$\tau^1_0 : \sJ^1_n(M) \longrightarrow M.$$
It is not too hard to see that the fibration $\tau^1_0$ is isomorphic to the \emph{Grassmann bundle} of $n$-dimensional subspaces of $\sT M$. In particular, there is no affine structure of the fibration $\tau^1_0$.  Thus, the tower of fibre bundles
$$\sJ_n^k(M) \stackrel{\tau^{k}_{k-1}}{\xrightarrow{\hspace*{20pt}} } \sJ^{k -1}_n(M)  \stackrel{\tau^{k-1}_{k-2}}{\xrightarrow{\hspace*{20pt}} } \sJ^{k -2}_n(M)   \longrightarrow  \cdots \longrightarrow \sJ^1_n(M)  \stackrel{\tau^{1}_{0}}{\xrightarrow{\hspace*{15pt}} } \sJ^0_n(M) := M.$$
is \emph{not} a tower of affine fibrations and so $\sJ^k_n(M)$ is not canonically a filtered bundle.
\end{coexample}

\section{Multiple structures and linearisation}\label{sec:MultipleFiltered}
\subsection{Filtered-linear bundles}
\begin{definition}
A \emph{filtered-linear bundle} is a vector bundle in the category of filtered bundles.
\end{definition}
\begin{remark}
Via `categorical nonsense' we know that we can reverse the definition and consider a filtered linear bundle as a filtered bundle in the category of vector bundles.
\end{remark}
The most economical way to define a vector bundle is in terms of a regular action of the multiplicative monoid of reals $(\mathbb{R}, \cdot)$ on a manifold (see \eqref{eqn:HomoReg} and \cite{Grabowski:2009,Grabowski:2012}). In this way we understand a filtered-linear bundle to be a filtered bundle together with regular action of $(\mathbb{R}, \cdot)$  that acts as a morphism of filtered bundles for all `time' $t \in \mathbb{R}$. This is quite consistent with the understanding of double vector bundles given in \cite{Grabowski:2009}. \par
Thus, on any filtered-linear bundle we can find natural coordinates adapted to the structure
$$(\underbrace{x^{a}}_{(0,0)},~ \underbrace{X^{I}_{w}}_{(w,0)} ; ~ \underbrace{Y^{\Sigma}_{u}}_{(u,1)}),$$
$(0\leq u \leq k)$ where we have denote the filtered degree and the natural   weight as coordinates on a vector bundle, together with the transformation law for the linear coordinates
$$Y^{\Sigma'}_{u} = \sum_{u_{0} + w_{1} + \cdots w_{n} \leq u} \frac{1}{n!} Y^{\Upsilon}_{u_{0}} X^{J_{1}}_{w_{1}} X^{J_{2}}_{w_{2}} \cdots X^{J_{n}}_{w_{n}}T_{ J_{n} \cdots  J_{1} \Upsilon}^{\hspace{30pt} \Sigma'}(x). $$
We take the coefficients to be symmetric in the $J$'s.
\begin{remark}
Filtered-graded bundles can similarly be defined as a filtered bundle equipped with a smooth action of the multiplicative monoid of reals that acts as morphisms of filtered bundles for all time, that is we drop the regularity condition.
\end{remark}
Given an arbitrary filtered bundle $\mathcal{F}_{\{k\}}$ of degree $k$ its tangent bundle $\sT \mathcal{F}_{\{k\}}$  naturally inherits natural  coordinates
$$(\underbrace{x^{a}}_{(0,0)} ,~ \underbrace{X^{I}_{w}}_{(w,0)}; ~\underbrace{\delta x^{b}}_{(0,1)}, ~ \underbrace{\delta X^{I}_{w}}_{(w,1)} ),$$
 The transformation law of the fibre coordinates are
\begin{eqnarray}
\label{eqn:TangentTrans}
\delta x^{a'} &=& \delta x^{b} \frac{\partial x^{a'}}{\partial x^{b}}, \\
\nonumber
\delta X^{I'}_{w} &=& \sum_{w_{1} + \cdots + w_{n}\leq w} \frac{1}{(n-1)!} \delta X^{J_{1}}_{w_{1}} X^{J_{2}}_{w_{2}} \cdots X^{J_{n}}_{w_{n}}T_{J_{n} \cdots J_{2} J_{1}}^{\hspace{30pt} I'}\\
\nonumber
 &+& \sum_{w_{1} + \cdots + w_{n}\leq w} \frac{1}{n!} X^{J_{1}}_{w_{1}} X^{J_{2}}_{w_{2}} \cdots X^{J_{n}}_{w_{n}}  \delta x^{a} \frac{\partial }{\partial x^{a}}T_{J_{n} \cdots J_{2} J_{1}}^{\hspace{30pt} I'}.
\end{eqnarray}
We will refer to the inherited filtered structure on the tangent bundle as the \emph{tangent lift} of the filtered bundle. A simple observation based on the above transformation laws is the following.
\begin{proposition}\label{prop:Ttower}
Given an arbitrary filtered bundle, $\mathcal{F}_{\{k\}}$ there exists a tower of fibre bundle structures
\begin{center}
\leavevmode
\begin{xy}
(0,20)*+{\sT \mathcal{F}_{\{ k\}}}="a"; (20,20)*+{\sT \mathcal{F}_{\{ k-1\}}}="b";  (40,20)*+{\cdots }="c"; (60,20)*+{\sT \mathcal{F}_{\{ 1\}}}="d"; (80,20)*+{\sT M}="e";   %
 {\ar "a";"b"}; {\ar "b";"c"}; {\ar "c";"d"}; {\ar "d";"e"};%
(0,0)*+{ \mathcal{F}_{\{ k\}}}="f"; (20,0)*+{\mathcal{F}_{\{ k-1\}}}="g";  (40,0)*+{\cdots }="h"; (60,0)*+{ \mathcal{F}_{\{ 1\}}}="i"; (80,0)*+{ M}="j";   %
{\ar "f";"g"}; {\ar "g";"h"}; {\ar "h";"i"}; {\ar "i";"j"};%
{\ar "a";"f"}; {\ar "b";"g"}; {\ar "d";"i"};  {\ar "e";"j"}%
\end{xy}
\end{center}
where the horizontal arrows are filtered bundle structures and the vertical arrows are vector bundle structures.
\end{proposition}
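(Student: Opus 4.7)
The plan is to promote the tangent bundle $\sT\mathcal{F}_{\{k\}}$ to a filtered bundle over $\sT M$ in its own right, and then to identify the top row of the diagram with its canonical tower of affine fibrations \eqref{eqn:tower}. First I would inspect the tangent transformation laws \eqref{eqn:TangentTrans} and assign weight $0$ to both $x^a$ and $\delta x^a$, and weight $w$ to both $X^I_w$ and $\delta X^I_w$. The claim is that $\sT\mathcal{F}_{\{k\}}$ with this choice is a filtered bundle of degree $k$ over $\sT M$, with total rank $2\dd$ if $\mathcal{F}_{\{k\}}$ has rank $\dd$. Verification is a direct term-by-term weight count in \eqref{eqn:TangentTrans}: in the first sum the distinguished factor $\delta X^{J_1}_{w_1}$ contributes weight $w_1$ while the remaining $X^{J_j}_{w_j}$ contribute weights $w_2,\ldots,w_n$, with total $w_1+\cdots+w_n\leq w$; in the second sum the factor $\delta x^a$ has weight $0$ and the $X$'s contribute $w_1+\cdots+w_n\leq w$. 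Combined with \eqref{eqn:tranformlaws} for $X^{I'}_w$, this exhibits every admissible transition function as a weighted polynomial of degree $\leq w$, and global coherence (the cocycle condition) is automatic from functoriality of the tangent lift applied to the cocycle of $\mathcal{F}_{\{k\}}$.

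Next I would identify the canonical tower of this filtered bundle with the top row of the diagram. By construction the $i$-th level is obtained by discarding all coordinates of weight $>i$; on $\sT\mathcal{F}_{\{k\}}$ these are exactly $X^I_w$ and $\delta X^I_w$ with $w>i$. What remains is the natural coordinate system on $\sT\mathcal{F}_{\{i\}}$, because $\mathcal{F}_{\{i\}}$ is itself defined by dropping the coordinates $X^I_w$ with $w>i$ from the atlas of $\mathcal{F}_{\{k\}}$, and the tangent functor commutes with this truncation. Hence the $i$-th level is canonically identified with $\sT\mathcal{F}_{\{i\}}$, and the horizontal arrows in the diagram coincide with the projections in the filtered tower; in particular each $\sT\mathcal{F}_{\{i\}}\to\sT\mathcal{F}_{\{i-1\}}$ is an affine fibration in a filtered bundle, hence itself a filtered bundle structure.

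Finally, the vertical arrows $\sT\mathcal{F}_{\{i\}}\to\mathcal{F}_{\{i\}}$ are the standard tangent bundle projections, which are vector bundles by definition, and commutativity of each square is the naturality of the tangent projection applied to the smooth map $\tau^i_{i-1}:\mathcal{F}_{\{i\}}\to\mathcal{F}_{\{i-1\}}$, together with the identification of $\sT\tau^i_{i-1}$ with the tower projection that we have just established. The only genuinely non-trivial ingredient is the weight bookkeeping in \eqref{eqn:TangentTrans}; everything else is formal once the filtered structure on $\sT\mathcal{F}_{\{k\}}$ is in place. The main obstacle is therefore largely bookkeeping: keeping careful track of the indices while verifying $w_1+\cdots+w_n\leq w$ in both sums of \eqref{eqn:TangentTrans}.
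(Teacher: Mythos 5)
Your proposal is correct and follows the same route the paper intends: the paper offers this as a ``simple observation'' based on the transformation laws \eqref{eqn:TangentTrans}, and your weight assignment (weight $0$ to $x^a,\delta x^a$ and weight $w$ to $X^I_w,\delta X^I_w$) is exactly the first component of the bi-weight the paper places on $\sT\mathcal{F}_{\{k\}}$, so your term-by-term weight count, the identification of the $i$-th truncation with $\sT\mathcal{F}_{\{i\}}$, and the naturality argument for the squares supply precisely the details the paper leaves implicit.
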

Another simple observation based on the transformation laws (\ref{eqn:TangentTrans})  is the following.
\begin{proposition}
The tangent functor $\sT$ and the canonical functor $\textnormal{Gr}$ commute.
\end{proposition}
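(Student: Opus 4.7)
The plan is to work entirely in local coordinates and exploit that both $\sT$ and $\Gr$ are determined by their action on transition functions of adapted atlases. First I would establish the statement on objects by a termwise comparison of the transition functions of $\sT\Gr(\cF_{\{k\}})$ and $\Gr(\sT\cF_{\{k\}})$, and then I would deduce the statement on morphisms by an essentially identical comparison. Naturality in identities and composition is then automatic because both functors are defined by local polynomial formulas.

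For the object-level step, recall that $\sT\cF_{\{k\}}$ carries natural coordinates $(x^a, X^I_w;\, \delta x^b, \delta X^I_w)$ with filtered weights $(0, w, 0, w)$, and its transition functions are given explicitly by \eqref{eqn:TangentTrans}. The functor $\Gr$ acts by keeping only the top filtered-weight $w$ part of each transformation, which in each of the two summands of \eqref{eqn:TangentTrans} means restricting the sum to $w_1+\cdots+w_n = w$; this projection is well defined because $\partial/\partial x^a$ preserves filtered weight. On the other side, $\Gr(\cF_{\{k\}})$ has coordinates $(x^a, y^I_w)$ with the purely graded transition law \eqref{gr}, and a direct differentiation of that law with respect to both $x^a$ and $y^J_{w'}$ produces exactly the same formulas for $(\delta x^a, \delta y^I_w)$, with $y$ in place of $X$. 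The identification of canonical homogeneous coordinates on $\Gr(\sT\cF_{\{k\}})$ with the tangent-lifted graded coordinates on $\sT\Gr(\cF_{\{k\}})$ thus yields an isomorphism of graded-linear bundles.

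For morphisms $\phi:\cF\to\cG$ of filtered bundles, the coordinate description \eqref{eqn:filtered morphisms} shows that $\sT\phi$ is a filtered morphism of the tangent lifts, and that the homogeneous-weight-$u$ component of each pulled-back fibre coordinate depends only on the homogeneous-weight-$u$ part of $\phi$. Hence $\Gr(\sT\phi)$ and $\sT\Gr(\phi)$ coincide termwise in coordinates, yielding a natural transformation $\sT\circ\Gr\Rightarrow \Gr\circ\sT$ which is invertible by the object-level argument. The main obstacle I anticipate is purely bookkeeping: one must verify that the extraction of the top filtered-weight part commutes with formal differentiation in both base and fibre directions. This reduces to noting that $\partial/\partial x^a$ has filtered weight $0$ while $\partial/\partial X^J_{w_i}$ has filtered weight $-w_i$, so that the top-weight projection of a derivative equals the derivative of the top-weight projection after polynomial re-expansion, which is exactly what \eqref{eqn:TangentTrans} encodes.
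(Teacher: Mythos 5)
Your proposal is correct and follows the same route the paper takes: the paper offers no written proof, presenting the proposition as "a simple observation based on the transformation laws \eqref{eqn:TangentTrans}", and your termwise comparison of the transition functions of $\Gr(\sT\cF_{\{k\}})$ and $\sT\Gr(\cF_{\{k\}})$ (together with the analogous check on morphisms) is exactly the computation that observation leaves implicit. The key point you identify --- that extracting the top homogeneous part commutes with differentiation in both base and fibre directions because lower-weight terms can never feed into higher-weight ones --- is the substance of the claim, so your write-up is a sound elaboration rather than a different argument.
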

In simpler terms, we end up with the same filtered-linear bundle independently of the  order in which we apply the tangent and canonical functor (up to natural isomorphisms).\par
The notion of the vertical bundle $\sV \mathcal{F}_{\{k\}}$ as a substructure of the tangent bundle  is clear, and this to is a filtered-linear bundle. Natural coordinates on the vertical bundle are $(x^{a},~ X^{I}_{w},~ \delta X^{I}_{w})$, the transformation laws for which are given by those of the tangent bundle by setting $\delta x =0$. A simple observation here is the following.
\begin{proposition}\label{prop:Vtower}
Given an arbitrary filtered bundle, $\mathcal{F}_{\{k\}}$ there exists a tower of fibre bundle structures
\begin{center}
\leavevmode
\begin{xy}
(0,20)*+{\sV \mathcal{F}_{\{ k\}}}="a"; (20,20)*+{\sV \mathcal{F}_{\{ k-1\}}}="b";  (40,20)*+{\cdots }="c"; (60,20)*+{\sV \mathcal{F}_{\{ 1\}}}="d";  %
 {\ar "a";"b"}; {\ar "b";"c"}; {\ar "c";"d"};%
(0,0)*+{ \mathcal{F}_{\{ k\}}}="f"; (20,0)*+{\mathcal{F}_{\{ k-1\}}}="g";  (40,0)*+{\cdots }="h"; (60,0)*+{ \mathcal{F}_{\{ 1\}}}="i";;   %
{\ar "f";"g"}; {\ar "g";"h"}; {\ar "h";"i"}; %
{\ar "a";"f"}; {\ar "b";"g"}; {\ar "d";"i"};  %
\end{xy}
\end{center}
where the horizontal arrows are filtered bundle structures and the vertical arrows are vector bundle structures.
\end{proposition}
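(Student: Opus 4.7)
The plan is to mirror the argument underlying Proposition~\ref{prop:Ttower}, simply restricting every step to the locus $\delta x=0$. First I would write down the transformation laws that govern $\sV\mathcal{F}_{\{k\}}$ in the natural coordinates $(x^{a},X^{I}_{w},\delta X^{I}_{w})$. Setting $\delta x^{a'}=0$ in \eqref{eqn:TangentTrans} gives
\begin{equation*}
\delta X^{I'}_{w} = \sum_{w_{1}+\cdots+w_{n}\leq w}\frac{1}{(n-1)!}\,\delta X^{J_{1}}_{w_{1}}\,X^{J_{2}}_{w_{2}}\cdots X^{J_{n}}_{w_{n}}\,T_{J_{n}\cdots J_{1}}^{\:\:I'}(x).
\end{equation*}
Two features of this expression drive the whole proof: it is linear in the $\delta X$ variables, and every coordinate appearing on the right has weight at most $w$.

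Next I would construct the horizontal projections $\sV\mathcal{F}_{\{i\}}\to\sV\mathcal{F}_{\{i-1\}}$. The weight-$\le w$ observation above shows that for any $i$ the ideal generated locally by coordinates of weight strictly greater than $i-1$ is preserved under all admissible coordinate changes; equivalently, dropping these coordinates from an atlas of $\sV\mathcal{F}_{\{k\}}$ produces a consistent bundle atlas whose underlying manifold is naturally identified with $\sV\mathcal{F}_{\{i-1\}}$. The residual transformation laws are weighted polynomials respecting the degree, so these projections are morphisms of filtered bundles, exactly as for the tower \eqref{eqn:tower}. Iterating yields the horizontal row.

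Then I would verify the vertical arrows and the commutativity of the squares. The vertical arrows $\sV\mathcal{F}_{\{i\}}\to\mathcal{F}_{\{i\}}$ are vector bundle projections because the displayed transformation law is strictly linear in the $\delta X$'s (every monomial contains exactly one factor $\delta X^{J_{1}}_{w_{1}}$), so the homogeneity structure defined by rescaling these coordinates is regular in the sense of \eqref{eqn:HomoReg} and compatible with the changes of chart. Commutativity of each square amounts to saying that forgetting the coordinates of weight $>i-1$ commutes with forgetting all $\delta X$'s, which is evident from the coordinate description.

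I do not anticipate a serious obstacle, as the statement is essentially the vertical analogue of Proposition~\ref{prop:Ttower}. The only point that genuinely requires attention is the global well-definedness of the projection $\sV\mathcal{F}_{\{i\}}\to\sV\mathcal{F}_{\{i-1\}}$; this reduces to the intrinsic character of the weight filtration on a filtered bundle, which ensures that the locus cut out by setting high-weight coordinates to zero is independent of the chart. Everything else is a direct transcription of the tangent-bundle case with the substitution $\delta x=0$.
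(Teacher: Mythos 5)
Your proposal is correct and follows exactly the route the paper intends: the paper offers no explicit proof, presenting the proposition as a ``simple observation'' based on the transformation laws obtained from \eqref{eqn:TangentTrans} by setting $\delta x=0$, which is precisely the computation you carry out. Your additional remarks on the chart-independence of dropping the weight-$>i-1$ coordinates and on the regularity of the rescaling action in the $\delta X$ variables are sound and only make explicit what the paper leaves implicit.
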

Passing to the cotangent bundle we encounter a complication. As the `momentum' associated with a given coordinate transform as derivatives, it is clear that the naturally inherited fibre coordinates on the cotangent bundle  of a filtered  bundle have negative weight. Thus we leave the category of filtered bundles. The solution to this problem is to consider the \emph{phase lift} as first defined for graded bundles in \cite{Grabowski:2013}. Essentially the phase lift is a shift in the weight of the momenta by $+k$, assuming we have a filtered bundle of degree $k$. We will always use this phase shift when dealing with cotangent bundles of filtered and graded bundles. Thus on $\sT^{*}\mathcal{F}_{\{k\}}$ we can employ natural coordinates
$$(\underbrace{x^{a}}_{(0,0)}, ~ \underbrace{X^{I}_{w}}_{(w,0)}; ~ \underbrace{\pi_{a}^{k}}_{(k,1)},~ \underbrace{P_{J}^{k-w}}_{(k-w,1)}).$$
The transformation laws for the momenta can easily be derived and the readers can quickly convince themselves that we do indeed not leave the category of filtered bundles (upon ignoring the vector bundle structure).\par
There are several interesting substructures of the cotangent bundle of a filtered bundle. Of potential interest for Hamiltonian mechanics is the Mironian (see \cite{Miron:2003} for the classical case)
$$\tau : \sT^{*}\mathcal{F}_{\{k\}} \rightarrow  \textnormal{Mi}(\mathcal{F}_{\{k\}}),$$
where we (locally) define the Mironian using local coordinates
$$(x^{a}, Y^{\Sigma}_{u}, P_{j}^{0})$$
where we have defined $X^{I}_{w} := (Y^{\Sigma}_{u}, Z_{k}^{j})$, with $0 < u <k$. From the transformation laws it is clear that
$$\textnormal{Mi}(\mathcal{F}_{\{k\}}) \simeq \mathcal{F}_{\{k\}} \times_{M} \sv\big(\mathcal{F}_{\{k\}}\big)^*.$$
\begin{remark}
While there is an analogous tower of fibre bundle structures associated the cotangent bundle of a filtered bundle, and indeed any filtered-linear bundle, the tower is not simply given by that of proposition (\ref{prop:Ttower}) (or proposition (\ref{prop:Vtower})) upon the replacement $\sT \mapsto \sT^{*}$.
\end{remark}
Another example of a filtered-linear bundle is provided by the dual of the vertical bundle $\sV^{*}\mathcal{F}_{\{ k\}}$, which is not a substructure of $\sT^{*}\mathcal{F}_{\{k\}}$, but rather a quotient of it.  Natural local coordinates on the dual vertical bundle are
$$(\underbrace{x^{a}}_{(0,0)}, ~ \underbrace{X^{I}_{w}}_{(w,0)}, ~ \underbrace{\Psi_{J}^{k-w+1}}_{(k-w+1,1)}),$$
and the admissible changes of linear coordinates can be deduced from the invariant pairing
$$\delta X_{w}^{I}\Psi_{I}^{k-w+1}.$$
The assignment of the bi-weight is chosen, like in the case of the cotangent bundle, in order for us not to leave the category of filtered bundles.

\begin{example}
Let us consider $\sV^{*}\mathcal{F}_{\{2\}}$, which we endow with local coordinates
$$(\underbrace{x^{a}}_{(0,0)},~ \underbrace{Y^{\alpha}}_{(1,0)}, ~ \underbrace{Z^{i}}_{(2,0)}; ~ \underbrace{\psi_{\beta}}_{(2,1)},~ \underbrace{\chi_{i}}_{(1,1)}).$$
The transformation laws for the linear coordinates can be deduced from the  invariant pairing
$$\delta Y^{\alpha}\psi_{\alpha} + \delta Z^{i}\chi_{i},$$
and after some straightforward algebra we arrive at
\begin{eqnarray*}
\psi_{\beta'} &=& T_{\beta'}^{\:\:\: \alpha} \psi_{\alpha} - \left(  Y^{\gamma}T_{\beta'}^{\:\:\:\alpha}T_{\alpha \gamma}^{\:\:\:\:\:\: j'}T_{j'}^{\:\:\: l} + T_{\beta'}^{\:\:\: \alpha} T_{\alpha}^{\:\:\: j'}T_{j'}^{\:\:\: l} \right)\chi_{l},\\
\chi_{j'} &=& T_{j'}^{\:\:\: i}\chi_{i}.
\end{eqnarray*}
Clearly these changes of coordinates are consistent with out assignment of the bi-weight and the expected filtration.
\end{example}
From the definition of a filtered-linear bundle, the notion of a morphism between filtered-linear bundles is clear: we have smooth morphisms as polynomial bundles that respect both the additional structures.  Evidently, under the standard composition of smooth maps we obtain the category of filtered-linear bundles, which we denote as $\catname{FilLB}$. Clearly we have the obvious forgetfull functor $\catname{FilLB} \rightarrow \catname{FilB}$ given by forgetting the linear structure. If we restrict attention to filtered-linear bundles where the filtered structure  is of degree $k$, then we obtain the full subcategory of  filtered-linear bundles of degree $k$, which we denote as $\catname{FilLB}[k]$.  Again, we have the obvious forgetful functor $\catname{FilLB}[k] \rightarrow \catname{FilB}[k]$.  
 \subsection{Linearisation}
One of the most useful features of the multiple graded bundles is the possibility of playing with weight vector fields. Since linear combination of weight vector fields with positive integer coefficients is again a weight vector field, we can use these new weights to analyze the structure of the bundle. We can also use negative integer coefficients if all the resulting weights are positive. This mechanism was used in \cite{Bruce:2014} to define linearisation of a graded bundle, and in \cite{Bruce:2016} iteration of the linearisation was used to define the full linearisation of a graded bundle. In case of weighted linear bundles we do not have weight vector fields for both structures, but the nature of transformation laws still makes it possible to shift weights in a filtered part of the bundle. Indeed let us use the coordinates
$$(\underbrace{x^{a}}_{(0,0)},~ \underbrace{X^{I}_{w}}_{(w,0)} ; ~ \underbrace{Y^{\Sigma}_{u}}_{(u,1)}),$$
adapted to the structure of a filtered-linear bundle.  The admissible transformations of coordinates are of the form
\begin{eqnarray*}
x^{a'} &=& x^{a'}(x), \\
X_{w}^{I'} &=&  \sum_{w_{1} + \cdots + w_{n}\leq w} \frac{1}{n!} X^{J_{1}}_{w_{1}} X^{J_{2}}_{w_{2}} \cdots X^{J_{n}}_{w_{n}}T_{J_{n} \cdots J_{2} J_{1}}^{\hspace{30pt} I'}(x), \\
Y^{\Sigma'}_{u} &=& \sum_{u_{0} + w_{1} + \cdots w_{n} \leq u} \frac{1}{n!} Y^{\Upsilon}_{u_{0}} X^{J_{1}}_{w_{1}} X^{J_{2}}_{w_{2}} \cdots X^{J_{n}}_{w_{n}}T_{ J_{n} \cdots  J_{1} \Upsilon}^{\hspace{30pt} \Sigma'}(x).
\end{eqnarray*}
We can now shift the ``filtered weight'' by adding the ``linear weight'', i.e. we have
$$(\underbrace{x^{a}}_{(0,0)},~ \underbrace{X^{I}_{w}}_{(w,0)} ; ~ \underbrace{Y^{\Sigma}_{u+1}}_{(u+1,1)}).$$
The first two rows of the above coordinate transformation equations remain unchanged. In the transformation rules for linear coordinates $Y^I_u$, a linear coordinate has to appear exactly once in every monomial, therefore the total weight of every monomial increases by one, so does the weight of the new linear coordinate. The transformation rules are therefore consistent with the new filtered weight assignment. Let us note that we can also decrease the filtered weight of the linear coordinates if the initial filtered weight of all the linear coordinates is strictly positive. This is the case of a filtered linear bundle $\sV \mathcal{F}$ for any filtered bundle $\mathcal{F}$. Let us now distinguish the coordinates with the highest weight  $k$
$$(x^a, Y^I_w, Z^\alpha_k)\,,\qquad 1\leq w< k.$$
On $\sV \mathcal{F}$ we have natural coordinates
$$(\underbrace{x^{a}}_{(0,0)},\, \underbrace{Y^I_w}_{(w,0)},\, \underbrace{Z^\alpha_k}_{(k,0)};\, \underbrace{\delta Y^i_w}_{(w,1)},\, \underbrace{\delta Z^\alpha_k)}_{(k,1)}$$
Decreasing the filtered weight by linear weight we get
$$(\underbrace{x^{a}}_{(0,0)},\, \underbrace{Y^I_w}_{(w,0)},\, \underbrace{Z^\alpha_k}_{(k,0)};\, \underbrace{\delta Y^i_{w-1}}_{(w-1,1)},\, \underbrace{\delta Z^\alpha_{k-1}}_{(k-1,1)})$$
\begin{definition}\label{def:linearisation}
The \emph{linearisation} of the filtered bundle $\mathcal{F}$ of degree $k$ is a filtered-linear bundle $\pLinr(\mathcal{F})$ which is the second term $(\sV\mathcal{F})_{\{k-1\}}$ in the tower of fibrations (\ref{eqn:tower}) for filtered structure of the filtered-linear bundle $\sV\mathcal{F}$ with shifted filtered weight.
\end{definition}
Natural coordinates in $\pLinr(\mathcal{F})$ are then
$$(\underbrace{x^{a}}_{(0,0)},\, \underbrace{Y^I_w}_{(w,0)};\, \underbrace{\delta Y^i_{w-1}}_{(w-1,1)},\, \underbrace{\delta Z^\alpha_{k-1}}_{(k-1,1)})$$
with admissible transformations of the form
\begin{eqnarray*}
x^{a'} &=& x^{a'}(x), \\
Y_{w}^{I'} &=&  \sum_{w_{1} + \cdots + w_{n}\leq w} \frac{1}{n!} Y^{J_{1}}_{w_{1}} Y^{J_{2}}_{w_{2}} \cdots Y^{J_{n}}_{w_{n}}T_{J_{n} \cdots J_{2} J_{1}}^{\hspace{30pt} I'}(x), \\
\delta Y^{I'}_{w} &=& \sum_{1\leq w_{1} + \cdots w_{n} \leq w} \frac{1}{(n-1)!} \delta Y^{J_{1}}_{w_{1}} Y^{J_{2}}_{w_{2}} \cdots Y^{J_{n}}_{w_{n}}T_{ J_{n} \cdots  J_{1}}^{\hspace{30pt} I'}(x) ,\\
\delta Z^{\alpha'}_{k} &=& \delta Z^\beta S^{\alpha'}_\beta(x)+\sum_{1\leq w_{1} + \cdots w_{n} \leq k} \frac{1}{(n-1)!} \delta Y^{J_{1}}_{w_{1}} Y^{J_{2}}_{w_{2}} \cdots Y^{J_{n}}_{w_{n}}R_{ J_{n} \cdots  J_{1}}^{\hspace{30pt} I'}(x).
\end{eqnarray*}
which means that $\pLinr(\mathcal{F})$ is a filtered-linear bundle with filtered order $k-1$.  By convention, if the actual (minimal) degree of $\mathcal{F}$ is $<k$, then the above procedure of linearisation in degree $k$ stops at $\sV\mathcal{F}$, as there are no coordinates of degree $k$ to be removed.

It is easy to see that the tower of fibrations associated to the filtered structure of $\pLinr(\mathcal{F})$ is
$$\pLinr(\mathcal{F})\longrightarrow \pLinr(\mathcal{F}_{\{k-1\}})\longrightarrow\cdots\rightarrow \pLinr(\mathcal{F}_{\{2\}})\longrightarrow \pLinr(\mathcal{F}_{\{1\}})\,.$$ Note that the filtered degree of $\pLinr(\mathcal{F}_{\{i\}})$ is $i-1$. \par
\begin{remark}
Heuristically, one should view the  linearisation of a  filtered bundle as a partial polarisation of the admissible changes of local coordinates. That is,  we adjoin new coordinates in a natural way as to linearise part of the polynomial changes of fibre coordinates.  The new coordinates essentially come from  applying the tangent functor and then reducing the resulting structure.
\end{remark}
\begin{example}\label{exp:linearisation affine} Let $\pi: \mathsf{A}\rightarrow M$  be an affine bundle, i.e., filtered bundle of degree $1$. The model vector bundle will be denoted by $\sv(\mathsf{A})$. There is the canonical isomorphism $\sV \mathsf{A}\simeq \mathsf{A}\times_M\sv(\mathsf{A})$. If coordinates in $\mathsf{A}$ are $(x^a, X^I_1)$, then coordinates on $\sV \mathsf{A}$ are $(x^a, X^I_1, \delta X^I_1)$, where coordinates $\delta X^I_1$ carry double filtered-linear weight $(1,1)$. After weight shifting the only coordinates with filtered weight equal to $1$ are $X^I_1$ in fibers of the affine bundle $\mathsf{A}$. Projection $\mathsf{A}\times_M\sv(\mathsf{A})\rightarrow\pLinr(\mathsf{A})$ coincides with the projection
on the second factor, i.e., $\pLinr(\mathsf{A})=\sv(\mathsf{A})$.  In other words, the linearisation of an affine bundle is its vector bundle model.
\end{example}
\begin{remark}
Note that Example \ref{exp:linearisation affine} shows that, on the contrary to graded case, there is no canonical embedding of the filtered bundle $\mathcal{F}$ into its linearisation.
\end{remark}
From the definition/construction of the linearisation and Example \ref{exp:linearisation affine} we arrive at the following observation.
\begin{proposition}
Canonically associated with the linearisation of any filtered bundle of degree $k$  is the following ladder of fibre bundle structures,
$$\xymatrix{
\pLinr(\mathcal{F}_{\{k\}})\ar[d] \ar[r] & \pLinr(\mathcal{F}_{\{k-1\}})\ar[d] \ar[r] & \cdots \ar[r] & \pLinr(\mathcal{F}_{\{2\}})\ar[d]\ar[r] & \pLinr(\mathcal{F}_{\{1\}}) = \sv(\mathcal{F}_{\{1\}})\ar[d] \\
\mathcal{F}_{\{k-1\}}\ar[r] & \mathcal{F}_{\{k-2\}}\ar[r] & \cdots \ar[r] & \mathcal{F}_{\{1\}}\ar[r] & M =: \mathcal{F}_{\{ 0 \}}
}$$
where the vertical arrows are vector bundle structures, and the   horizontal arrows  are  general filtered bundle structures.
\end{proposition}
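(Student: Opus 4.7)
My strategy is to assemble the ladder functorially by applying the linearisation construction termwise to the tower \eqref{eqn:tower} of the filtered bundle $\mathcal{F}=\mathcal{F}_{\{k\}}$ and using the canonical vector bundle projections coming directly from Definition \ref{def:linearisation}. The whole diagram is essentially forced once one unpacks the local coordinate description of $\pLinr$.

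First I would identify the vertical arrows. Unpacking Definition \ref{def:linearisation}, an adapted chart on $\pLinr(\mathcal{F}_{\{i\}})$ has zero-linear-weight coordinates $(x^a, Y^I_w)$ with $1\leq w<i$ together with linear coordinates $(\delta Y^j_{w-1}, \delta Z^\alpha_{i-1})$. Setting all linear coordinates to zero produces precisely the chart $(x^a, Y^I_w)_{1\leq w<i}$ of $\mathcal{F}_{\{i-1\}}$, and the admissible transition functions on $\pLinr(\mathcal{F}_{\{i\}})$ reduce under this zero-section to the transition functions of $\mathcal{F}_{\{i-1\}}$. Hence $\pLinr(\mathcal{F}_{\{i\}})\to \mathcal{F}_{\{i-1\}}$ is a canonical vector bundle. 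For the rightmost column, the identification $\pLinr(\mathcal{F}_{\{1\}})=\sv(\mathcal{F}_{\{1\}})$ is exactly the content of Example \ref{exp:linearisation affine}, while the bottom row of the ladder is just the tower \eqref{eqn:tower} reindexed.

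Next I would construct the horizontal arrows. The projection $\tau^i_{i-1}:\mathcal{F}_{\{i\}}\to \mathcal{F}_{\{i-1\}}$ is a morphism in $\catname{FilB}$ (it simply forgets the top-weight coordinates); since linearisation extends to a functor (one applies $\sV$ to the morphism, shifts the filtered weight, and passes to the degree-$(k-1)$ truncation, each operation being manifestly functorial on morphisms of filtered bundles), one obtains filtered morphisms
\begin{equation*}
\pLinr(\tau^i_{i-1}):\pLinr(\mathcal{F}_{\{i\}})\longrightarrow \pLinr(\mathcal{F}_{\{i-1\}})
\end{equation*}
for every $2\leq i\leq k$. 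In the local coordinates above, $\pLinr(\tau^i_{i-1})$ simply drops the coordinates of degree $i-1$, i.e. $Y^I_{i-1}$, $\delta Y^j_{i-2}$ and $\delta Z^\alpha_{i-1}$. The remaining commutativity of each square
\begin{equation*}
\pLinr(\tau^{i-1}_{i-2})\circ\bigl(\text{vert}_i\bigr)\;=\;\tau^{i-1}_{i-2}\circ\pLinr(\tau^i_{i-1})
\end{equation*}
(where $\text{vert}_i$ denotes the vector bundle projection) is immediate in coordinates: both sides first annihilate all linear coordinates and then retain only those $Y$-coordinates of degree $<i-1$.

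The only mildly delicate point, which I regard as the main obstacle, is the verification that the horizontal arrows really are bundle maps between filtered-linear bundles over the correct bases — in particular, that the weight shift operation commutes with the truncation $(\,\cdot\,)_{\{i\}}$ of the tower. This however follows because, with the filtered-linear bi-weight assignment recorded in Definition \ref{def:linearisation}, every linear coordinate carries exactly one factor in each monomial of its transition law, so weight-shifting and polynomial truncation are performed on disjoint pieces of the transformation rule and therefore commute. Once this is noted, both the compatibility of the squares with the vector bundle and filtered bundle structures and the identification of the last column follow at once, completing the construction of the ladder.
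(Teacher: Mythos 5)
Your overall strategy coincides with what the paper intends: the proposition is stated there without a separate proof precisely because, as you argue, the ladder falls out of Definition \ref{def:linearisation} once one unwinds the coordinates — the vertical arrows forget the linear coordinates (which is a vector bundle projection because every monomial in the transition law of a linear coordinate contains exactly one linear factor), the horizontal arrows are the filtered tower of $\pLinr(\mathcal{F})$, the right column is Example \ref{exp:linearisation affine}, and commutativity of the squares is immediate. So the route is the same and the argument is essentially sound.

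One coordinate-level slip should be corrected, because as written it describes a map into the wrong target. You claim that $\pLinr(\tau^i_{i-1})$ drops $Y^I_{i-1}$, $\delta Y^j_{i-2}$ \emph{and} $\delta Z^\alpha_{i-1}$. The coordinate $\delta Y^j_{i-2}$ has shifted filtered degree $i-2$, not $i-1$, and it must \emph{survive} the projection: it is exactly the coordinate that plays the role of the top linear coordinate $\delta Z_{i-2}$ of $\pLinr(\mathcal{F}_{\{i-1\}})$ (the shifted vertical lift of the degree-$(i-1)$ coordinates of $\mathcal{F}_{\{i-1\}}$). If you discard it as well, the remaining linear coordinates are only the shifted $\delta X_w$ with $w\leq i-2$, which is strictly smaller than the fibre of $\pLinr(\mathcal{F}_{\{i-1\}})\to\mathcal{F}_{\{i-2\}}$, so the described map does not land in $\pLinr(\mathcal{F}_{\{i-1\}})$. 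The correct statement is that the horizontal arrow drops only the coordinates of top shifted filtered degree $i-1$, namely $Y^I_{i-1}$ and $\delta Z^\alpha_{i-1}$. With that correction your commutativity check and the rest of the construction go through unchanged.
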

\begin{example}
Continuing Example \ref{exp:jetbundles}, we examine the linearisation of the jet bundle $\sJ^2 E$, where $\pi : E \rightarrow M$ is fibre bundle. Recall that $\sJ^2 E$ can be equipped with adapted local coordinates $(x^{a},~ y^{\alpha},~z^{\beta}_{b}, ~ w^\gamma_{cd})$, where as a filtered bundle the coordinates $(x, y)$ on $E$ are of weight zero, while $z$ and $w$ are of weight $1$ and $2$, respectively. Following the `recipe' for the construction of the linearisation, we see that $\pLinr(\sJ^2 E)$ comes with naturally induced coordinates
$$\big ( \underbrace{x^a}_{(0,0)}, \: \underbrace{y^\alpha}_{(0,0)} , \: \underbrace{z_b^\beta}_{(1,0)} , \: \underbrace{\delta z_c^\delta}_{(0,1)}, \: \underbrace{\delta w_{cd}^\epsilon}_{(1,1)}\big). $$
The admissible changes of the `new' coordinates can be deduced directly by taking derivatives and shown to be
\begin{align*}
 & \delta z_{a'}^{\alpha'} = \left(\frac{\partial x^b}{\partial x^{a'}} \right)\left(\delta z_b^\beta \frac{\partial }{\partial y^\beta} \right) y^{\alpha'}(x,y),\\
 & \delta w_{a'b'}^{\alpha'} = \left( \frac{\partial x^c}{\partial x^{a'}}\right)\left(\delta z_c^\beta  \frac{\partial}{\partial y^\beta} + \delta w_{cd}^\beta  \frac{\partial}{\partial  z_d^\beta}\right) z_{b'}^{\alpha'} (x,y,z)\\
  & + \left( \frac{\partial x^c}{\partial x^{a'}}\right) \delta z_e^\gamma   \left(  \left(\frac{\partial x^e}{\partial x^{b'}} \right) \frac{\partial^2}{\partial x^c \partial y^\gamma}  {-}  \left(\frac{\partial x^d}{\partial x^{b'}} \right)\frac{\partial^2 x^{f'}}{\partial x^d \partial x^c} \left( \frac{\partial x^e}{\partial x^{f'}}\right)\frac{\partial}{\partial y^\gamma}\right)y^{\alpha'}(x,y).
\end{align*}
We have the following commutative diagram of fibrations:
\begin{center}
\leavevmode
\begin{xy}
(0,20)*+{\pLinr(\sJ^2E)}="a"; (20,20)*+{ \sv(\sJ^1E)}="b";  %
(0,0)*+{ \sJ^1E}="c"; (20,0)*+{E}="d";  %
{\ar "a";"b"}; {\ar "c";"d"};%
{\ar "a";"c"}; {\ar "b";"d"};%
\end{xy}\,,
\end{center}
where the vertical arrow are vector bundle structures, while the horizontal arrows are  general filtered bundle structures. Note that $\pLinr(\sJ^1E) = \sv(\sJ^1E)$ from Example \ref{exp:linearisation affine}.
\end{example}
\begin{remark}
As far as we know, the linearisation of jet bundles $\sJ^kE$, other than the first jet bundle where we have  $\pLinr(\sJ^1E) = \sv(\sJ^1E)$, has not appeared in the previous literature. It is certainly our opinion that this construction deserves further study.
\end{remark}

Similarly to the case of the linearisation of graded bundles we have the following theorem
\begin{theorem}\label{thm:LinearisationFunctor}
The linearisation  of a filtered bundle can be considered as a functor
$$\pLinr : \catname{FilB}[k] \rightarrow \catname{FilLB}[k-1],$$
 from the category of filtered bundles of degree $k$ to the category of filtered-linear bundles of degree $k-1$.
\end{theorem}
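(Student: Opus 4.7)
The plan is to define $\pLinr$ on morphisms in the only natural way suggested by the construction of the linearisation as an object, and then verify that all the required functorial properties follow almost automatically from the fact that we are composing three constructions, each of which is visibly compatible with morphisms of filtered bundles. Specifically, given a morphism $\phi : \mathcal{F}_{\{k\}} \to \mathcal{G}_{\{k\}}$ in $\catname{FilB}[k]$, presented locally as in (\ref{eqn:filtered morphisms}), I would first apply the vertical functor to obtain $\sV\phi : \sV\mathcal{F} \to \sV\mathcal{G}$, which by standard differential geometry is a morphism of polynomial bundles that is linear on the fibres of the vertical vector bundle structure. In coordinates its action on $\delta Y$ and $\delta Z$ is given by differentiating the coefficients $\phi^{\Sigma}_{I_n\cdots I_1}(x)$ of $\phi$ with respect to the non-zero-weight variables; inspection shows that the resulting expressions are of the same form as those for a morphism of filtered-linear bundles, so that $\sV\phi$ is a morphism in $\catname{FilLB}[k]$ with respect to the un-shifted bi-weight $(w,*)$.

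Second, I would observe that the passage to the shifted bi-weight $(w,0)\mapsto (w,0)$, $(w,1)\mapsto (w-1,1)$ used in Definition \ref{def:linearisation} is not a change of object but only a relabelling of the weights attached to the same coordinate functions; since every monomial appearing in the transformation rules (and hence in the coordinate description of $\sV\phi$) contains exactly one linear factor, the total filtered weight of each monomial simply decreases by one, and the compatibility condition defining a morphism of filtered-linear bundles continues to hold, now with respect to the shifted weight. Third, I would use the tower (\ref{eqn:tower}) for the filtered structure of $\sV\mathcal{F}$ with the shifted weight: by definition, $\pLinr(\mathcal{F}) = (\sV\mathcal{F})_{\{k-1\}}$. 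Because morphisms of filtered bundles never raise weight, the coordinates of weight $\leq k-1$ on the target are expressed through the coordinates of weight $\leq k-1$ on the source, so $\sV\phi$ descends to a well-defined morphism $\pLinr(\phi) : \pLinr(\mathcal{F}) \to \pLinr(\mathcal{G})$ between the $(k-1)$-st levels; this is the same mechanism by which each $\tau^{i}_{i-1}$ in (\ref{eqn:tower}) is itself a morphism in $\catname{FilB}$.

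With $\pLinr(\phi)$ so defined, functoriality is immediate: $\pLinr(\Id_{\mathcal{F}})$ acts as the identity on both the base coordinates $Y^I_w$ and on $\delta Y^I_{w-1},\delta Z^\alpha_{k-1}$ (since $\sV(\Id) = \Id$ and restriction of the identity to the tower level is the identity), and for composable morphisms $\phi:\mathcal{F}\to\mathcal{G}$, $\psi:\mathcal{G}\to\mathcal{H}$ the equality $\pLinr(\psi\circ\phi) = \pLinr(\psi)\circ\pLinr(\phi)$ follows from the chain rule (functoriality of $\sV$) together with the fact that the weight-shift and the tower truncation commute with composition—these are just restriction/reinterpretation operations performed fibrewise. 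I would round off the argument with the short verification that $\pLinr(\phi)$ is in fact a morphism of vector bundles with respect to the linear structure inherited from $\sV$, which is automatic as $\sV\phi$ is fibrewise linear and neither the weight shift nor the tower truncation affects the linear structure on the $\delta$-coordinates.

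The main obstacle, though relatively mild, is the verification in the second and third steps that the induced coordinate expressions for $\sV\phi$ are weighted polynomials of degree $\leq u$ with respect to the \emph{shifted} filtered weight of every output coordinate of degree $u$, and hence that the truncation to the $(k-1)$-st tower level is consistent; this amounts to tracking indices in the derivative of (\ref{eqn:filtered morphisms}) and using that the coefficients $\phi^{\Sigma}_{I_n\cdots I_1}(x)$ depend only on base coordinates of weight zero. Once this is in place, the functor $\pLinr : \catname{FilB}[k] \to \catname{FilLB}[k-1]$ is obtained, and its restriction to the subcategory of graded bundles recovers the linearisation functor of \cite{Bruce:2014}, as a sanity check.
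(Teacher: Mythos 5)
Your proposal is correct and follows essentially the same route as the paper: apply the vertical tangent functor $\sV$ to $\phi$, check in coordinates that the result respects the (shifted) bi-weight, drop the top-weight coordinates to descend to $\pLinr(\phi)$, and inherit functoriality from $\sV$. The paper phrases the descent step concretely (the only target coordinate whose pullback involves $Z^\alpha_k$ is $\bar Z^\mu_k$, so both may be dropped simultaneously), which is exactly the mechanism you invoke via the tower truncation.
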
 
\begin{proof}
The action of  linearisation on an object in $\catname{FilB}[k]$ --  by construction  -- produces an object in $\catname{FilLB}[k-1]$. The only thing we have to prove that for every morphism $\phi:\mathcal{F}_k\rightarrow\mathcal{F}'_k$ there exists a unique  morphism of the filtered-linear bundles $\pLinr(\phi) : \pLinr(\mathcal{F}_k)\rightarrow \pLinr(\mathcal{F}'_k)$. We do this via explicit construction. It is clear that we can apply the vertical tangent functor $\sV$ to the morphism $\phi$. What we get is a morphism of filtered-linear bundles $\sV\mathcal{F}_k$ and $\sV\mathcal{F}'_k$. Indeed using adapted coordinates with distinguished highest weight $(x^a, X^I_w, Z^\alpha_k)$ in $\mathcal{F}_k$ and $(\bar x^i, \bar X^P_u, \bar Z^\mu_k)$ in $\mathcal{F}'_k$ we can express $\phi$ in the following form
\begin{eqnarray*}
\bar x^i\circ\phi(x)&=&\phi^i(x), \\
\bar X^P_u\circ\phi(x,X)&=&\sum_{w_{1} + \cdots + w_{n}\leq u} \frac{1}{n!} X^{I_{1}}_{w_{1}} X^{I_{2}}_{w_{2}} \cdots X^{I_{n}}_{w_{n}}\Phi_{I_{n} \cdots I_{2} I_{1}}^{\hspace{30pt} P}(x),\\
\bar Z^\mu_k\circ\phi(x,X,Z)&=&Z^\alpha_k\Psi^\mu_\alpha(x) + \sum_{w_{1} + \cdots + w_{n}\leq k} \frac{1}{n!} X^{I_{1}}_{w_{1}} X^{I_{2}}_{w_{2}} \cdots X^{I_{n}}_{w_{n}}\Xi_{I_{n} \cdots I_{2} I_{1}}^{\hspace{30pt} \mu}(x).
\end{eqnarray*}
Applying $\sV$ we get the expressions for linear coordinates
\begin{eqnarray*}
\delta \bar X^P_u\circ\sV\phi(x,X,\delta X)&=&\sum_{w_{1} + \cdots + w_{n}\leq u} \frac{1}{(n-1)!} \delta X^{I_{1}}_{w_{1}} X^{I_{2}}_{w_{2}} \cdots X^{I_{n}}_{w_{n}}\Phi_{I_{n} \cdots I_{2} I_{1}}^{\hspace{30pt} P}(x),\\
\delta \bar Z^\mu_k\circ\sV\phi(x,X,Z, \delta X, \delta Z)&=&\delta Z^\alpha_k\Psi^\mu_\alpha(x) + \sum_{w_{1} + \cdots + w_{n}\leq k} \frac{1}{(n-1)!} \delta X^{I_{1}}_{w_{1}} X^{I_{2}}_{w_{2}} \cdots X^{I_{n}}_{w_{n}}\Xi_{I_{n} \cdots I_{2} I_{1}}^{\hspace{30pt} \mu}(x).
\end{eqnarray*}
The only place where there appear coordinates $Z^\alpha_k$ of highest weight is in the expression for $\bar Z^\mu_k\circ\phi(x,X,Z)$, so we can drop coordinates $Z^\alpha_k$ and
$\bar Z^\mu_k$ simultaneously obtaining an expression for a map from $\pLinr(\mathcal{F}_k)$ to $\pLinr(\mathcal{F}'_k)$. This map we define to be $\pLinr(\phi)$. From the coordinate expression we see that $\pLinr(\phi)$ is indeed a morphism of filtered-linear bundles. It is easy to see that $\pLinr(\mathrm{id}_{\mathcal{F}})=\mathrm{id}_{\pLinr({\mathcal{F}})}$ and $\pLinr(\phi\circ\psi)=\pLinr(\phi)\circ\pLinr(\psi)$. The latter follows from the appropriate property of the vertical tangent functor. Thus the linearisation satisfies all the necessary properties to be a functor.
\end{proof}
\begin{example}
Let $\varphi:\mathcal{A}\rightarrow\mathcal{B}$ be a morphism of filtered bundles of degree $1$, i.e. an affine bundle morphism. Then the linearisation $\pLinr(\phi)$ is just the linear part of $\phi$, i.e. the morphism $\sv(\phi):\sv(\mathcal{A})\to\sv(\mathcal{B})$ of the model vector bundles.
\end{example}
 An observation here  is that since morphism $\phi$ gives rise to a tower of morphisms
$$\xymatrix{
\mathcal{F}_{\{k\}}\ar[d]^{\phi_{\{k \}} := \phi}\ar[r] & \mathcal{F}_{\{k-1\}}\ar[d]^{\phi_{\{k-1\}}}\ar[r] & \cdots \ar[r] & \mathcal{F}_{\{1\}}\ar[d]^{\phi_{\{1\}}}\ar[r] & M\ar[d]_{\phi_0} \\
\mathcal{F}'_{\{k\}}\ar[r] & \mathcal{F}'_{\{k-1\}}\ar[r] & \cdots \ar[r] & \mathcal{F}'_{\{1\}}\ar[r] & M'
}$$
its linearisation gives rise to the tower of filtered-linear morphisms
$$\xymatrix{
\pLinr(\mathcal{F}_{\{k\}})\ar[d]^{\pLinr(\phi)}\ar[r] & \pLinr(\mathcal{F}_{\{k-1\}})\ar[d]^{\pLinr(\phi_{\{k-1\}})}\ar[r] & \cdots \ar[r] & \pLinr(\mathcal{F}_{\{1\}})\ar[d]^{\pLinr(\phi_{\{1\}})}\ar[r]& M\ar[d]_{\phi_0}\\
\pLinr(\mathcal{F}'_{\{k\}})\ar[r] & \pLinr(\mathcal{F}'_{\{k-1\}})\ar[r] & \cdots \ar[r] & \pLinr(\mathcal{F}'_{\{1\}})\ar[r] & M'
}$$

\subsection{Double and multiple filtered structures}
The idea of a double filtered bundle is clear. We have a polynomial bundle equipped with two independent, but compatible, filtered structures. That is we can find an atlas consisting of local coordinates of the form
$$(x^{a},~ Z^{A}_{(u,v)}),$$
where we understand the label
$$(u,v) \in [0,k] \times [0,l] \setminus (0,0).$$
The admissible changes of the fibre  coordinates are of the form
\begin{equation}\label{eqn:transDouble}
Z^{A'}_{(u,v)} = \sum\limits_{\substack{u_{1}+ u_{2} + \cdots + u_{n} \leq u\\  v_{1}+ v_{2} + \cdots + v_{n} \leq v} }  \frac{1}{n!} Z^{B_{1}}_{(u_{1}, v_{1})}Z^{B_{2}}_{(u_{2}, v_{2})} \cdots Z^{B_{n}}_{(u_{n}, v_{n})} T_{B_{n} \cdots B_{2} B_{1}}^{\hspace{35pt} A'}(x).
\end{equation}
We will adopt the notation $\mathcal{F}_{\{k,l \}}$ for a general double filtered bundle of bi-degree $(k,l)$. The order of the bi-degree is important and taken as an intrinsic part of the definition of double filtered bundle.\par
From the coordinate transformations (\ref{eqn:transDouble}) we arrive at the following observation which generalises the tower  (\ref{eqn:tower}), as well as Proposition \ref{prop:Ttower} and Proposition \ref{prop:Vtower}.
\begin{proposition}\label{prop:Doubletower}
Given an arbitrary double filtered bundle, $\mathcal{F}_{\{k,l\}}$ there exists a grid of affine fibre bundle structures
\begin{center}
\leavevmode
\resizebox{9cm}{6cm}{
\begin{xy}
(0,100)*+{\mathcal F_{\{k,l \}}}="a1"; (30,100)*+{\mathcal F_{\{k,l-1 \}}}="a2";
(60,100)*+{\cdots}="a3"; (90,100)*+{\mathcal F_{\{k,1 \}}}="a4";
(120,100)*+{\mathcal F_{\{k,0 \}}}="a5";
(0,80)*+{\mathcal F_{\{k-1,l \}}}="b1"; (30,80)*+{\mathcal F_{\{k-1,l-1 \}}}="b2";
(60,80)*+{\cdots}="b3"; (90,80)*+{\mathcal F_{\{k-1,1 \}}}="b4";
(120,80)*+{\mathcal F_{\{k-1,0 \}}}="b5";
(0,60)*+{\vdots}="c1"; (30,60)*+{\vdots}="c2";
(60,60)*+{\vdots}="c3"; (90,60)*+{\vdots}="c4";
(120,60)*+{\vdots}="c5";
(0,40)*+{\mathcal F_{\{1,l \}}}="d1"; (30,40)*+{\mathcal F_{\{1,l-1 \}}}="d2";
(60,40)*+{\cdots}="d3"; (90,40)*+{\mathcal F_{\{1,1 \}}}="d4";
(120,40)*+{\mathcal F_{\{1,0 \}}}="d5";
(0,20)*+{\mathcal F_{\{0,l \}}}="e1"; (30,20)*+{\mathcal F_{\{0,l-1 \}}}="e2";
(60,20)*+{\cdots}="e3"; (90,20)*+{\mathcal F_{\{0,1 \}}}="e4";
(120,20)*+{\mathcal F_{\{0,0 \}}}="e5";;%
{\ar "a1";"a2"};{\ar "a2";"a3"};{\ar "a3";"a4"};{\ar "a4";"a5"};
{\ar "b1";"b2"};{\ar "b2";"b3"};{\ar "b3";"b4"};{\ar "b4";"b5"};
{\ar "d1";"d2"};{\ar "d2";"d3"};{\ar "d3";"d4"};{\ar "d4";"d5"};
{\ar "e1";"e2"};{\ar "e2";"e3"};{\ar "e3";"e4"};{\ar "e4";"e5"};
{\ar "a1";"b1"};{\ar "a2";"b2"};{\ar "a3";"b3"};{\ar "a4";"b4"}; {\ar "a5";"b5"};
{\ar "b1";"c1"};{\ar "b2";"c2"};{\ar "b3";"c3"};{\ar "b4";"c4"}; {\ar "b5";"c5"};
{\ar "c1";"d1"};{\ar "c2";"d2"};{\ar "c3";"d3"};{\ar "c4";"d4"}; {\ar "c5";"d5"};
{\ar "d1";"e1"};{\ar "d2";"e2"};{\ar "d3";"e3"};{\ar "d4";"e4"}; {\ar "d5";"e5"};
\end{xy}
}
\end{center}
\end{proposition}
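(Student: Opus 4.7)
The plan is to realise each $\mathcal{F}_{\{i,j\}}$ in the grid as the polynomial bundle obtained by truncating the atlas of $\mathcal{F}_{\{k,l\}}$: one retains only those homogeneous fibre coordinates $Z^{A}_{(u,v)}$ with $u\le i$ and $v\le j$, and then verifies that the restricted transition functions are well defined and endow each projection of the grid with the structure of an affine bundle. The argument runs completely in parallel with the single-filtered case producing the tower (\ref{eqn:tower}), only now lifted to a bi-indexed lattice of weights.

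First, I would check that the truncation is internally consistent. For any retained bi-weight $(u,v)$ with $u\le i$ and $v\le j$, every monomial on the right-hand side of (\ref{eqn:transDouble}) satisfies $u_m\le u\le i$ and $v_m\le v\le j$ for each factor, so only retained coordinates appear. The cocycle condition on $\mathcal{F}_{\{i,j\}}$ is inherited from that on $\mathcal{F}_{\{k,l\}}$ because composition of two bi-graded affine changes is again bi-graded affine and can be analysed bi-weight by bi-weight; the discarded ``higher'' coordinates cannot feed back into the transformation rule of any retained one. This is the same order-by-order mechanism invoked earlier for the functor $\Gr$, transposed to the bi-weight lattice.

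Next, I would verify that each horizontal projection $\tau\colon \mathcal{F}_{\{i,j\}}\to\mathcal{F}_{\{i-1,j\}}$ is an affine bundle; the vertical case is symmetric. Its fibre coordinates are $\{Z^{A}_{(i,v)} : 0\le v\le j\}$, and for such a coordinate the constraint $u_1+\cdots+u_n\le i$ with $u_m\ge 0$ forces either $n=1$ with $u_1=i$, giving a linear term whose coefficient matrix depends only on $x$, or all $u_m<i$, giving a shift expressible as a polynomial in coordinates already present on $\mathcal{F}_{\{i-1,j\}}$. This is precisely the form of an affine bundle transition law. The commutativity of each square in the grid is then immediate, since truncating first in one index and then in the other yields the same retained set of coordinates with the same restricted transition functions.

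The only genuinely technical point is confirming that the cocycle condition is preserved under truncation; everything else follows by direct inspection of (\ref{eqn:transDouble}). The verification is routine but rests on the polynomial nature of the fibre bundle and on the closure of bi-graded affine maps under composition in each bi-weight slot, which is the bi-indexed analogue of the argument already developed in Section \ref{sec:FilteredBundles}.
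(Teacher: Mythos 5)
Your approach --- truncating the adapted atlas to the coordinates of bi-weight $(u,v)$ with $u\le i$, $v\le j$ and checking that the restricted transition functions close up --- is exactly the argument the paper has in mind: the proposition is stated there as a direct observation from the transformation law (\ref{eqn:transDouble}) with no written proof, and your consistency, cocycle and commutativity checks supply the missing routine details. In fact the cocycle step is even easier than you make it: since every monomial in the transformation of a retained coordinate involves only retained coordinates, nothing is discarded at all, so the truncated transition functions are literal restrictions of components of the full ones and the cocycle identity is inherited verbatim, without any order-by-order analysis of the kind needed for the functor $\Gr$.

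There is, however, one incorrect step in your verification that the horizontal projection $\mathcal{F}_{\{i,j\}}\to\mathcal{F}_{\{i-1,j\}}$ is an affine bundle. The dichotomy you state --- either $n=1$ with $u_1=i$, or all $u_m<i$ --- is false, because in the double filtered setting a fibre coordinate may carry bi-weight $(0,v)$ with $v>0$, i.e.\ \emph{first} weight zero. The constraint $u_1+\cdots+u_n\le i$ therefore also admits monomials such as $Z^{B_1}_{(i,v_1)}Z^{B_2}_{(0,v_2)}\cdots Z^{B_n}_{(0,v_n)}$ with $n>1$: these are linear in the top coordinates, but their coefficients depend on the first-weight-zero fibre coordinates and not only on $x$. (This case cannot occur in the single-filtered tower (\ref{eqn:tower}), where all fibre weights are strictly positive; that is presumably where the dichotomy was imported from.) The conclusion nevertheless survives: at most one factor can have $u_m=i$, and then all other factors have $u_{m'}=0$, so the transformation of the coordinates $Z_{(i,\cdot)}$ is still affine over $\mathcal{F}_{\{i-1,j\}}$ --- a linear part whose coefficients are functions on $\mathcal{F}_{\{0,j\}}$ (hence on the base of the fibration), plus a shift polynomial in coordinates already present on $\mathcal{F}_{\{i-1,j\}}$. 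You should restate the case analysis accordingly; note that it also shows the model vector bundle of the horizontal fibration to be a pull-back from $\mathcal{F}_{\{0,j\}}$ rather than from $M$, which is the bi-graded analogue of Theorem \ref{T0}.
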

\begin{example}
Double affine bundle $\mathbf{A}$ in the sense of \cite{Grabowski:2016} is a double filtered bundle with bi-degree $(1,1)$. Let  $\pLinr_i$, $i=1,2$, denote linearisations with respect to the first and the second affine (filtered) structure. The bundles $\pLinr_1(\mathbf{A})$ and $\pLinr_2(\mathbf{A})$ are both filtered-linear bundles isomorphic to ${\sv}_1(\mathbf{A})$ and
${\sv}_2(\mathbf{A})$ respectively. Applying then linearisation functor to the remaining filtered structure we get two canonically isomorphic double vector bundles $\pLinr({\sv}_1(\mathbf{A}))\simeq \pLinr(\sv_2(\mathbf{A}))$.
\end{example}
\begin{example}
Any filtered-linear bundle can be considered as a double filtered bundle with \newline $Z^{A}_{(u,v)} := (X^{I}_{(w,0)}, ~  Y^{\Sigma}_{(u,1)})$.
\end{example}
A nice construction which provides a huge list of examples of double filtered bundles is the following theorem.
\begin{theorem} For any filtered bundle  $\cF_{\{l\}}\to M$ of degree $l$, the jet bundle $\sJ^k(\cF_{\{l\}})$ is canonically a double filtered bundle of bi-degree $(l,k)$.  The first bundle structure is over $M$, the second is over $\cF_{\{l\}}$.
\end{theorem}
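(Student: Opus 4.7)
The plan is to verify in local coordinates that the jet bundle carries a bi-weight assignment compatible with the transition functions, the whole analysis being controlled by the behaviour of the iterated total derivative under bi-weight.

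Start from adapted coordinates $(x^a, X^I_w)$ on $\cF_{\{l\}}$ with $\w(x^a)=0$ and $\w(X^I_w)=w$, so that the transitions have the filtered-polynomial form \eqref{eqn:tranformlaws}. The natural jet coordinates on $\sJ^k(\cF_{\{l\}})$ (see Example~\ref{exp:jetbundles}) are $X^I_{w, a_1\cdots a_j}$, symmetric in $a_1,\dots,a_j$, for $1\leq j\leq k$. I would assign the bi-weights
$$\w(x^a) = (0,0),\qquad \w(X^I_w) = (w,0),\qquad \w(X^I_{w, a_1\cdots a_j}) = (w,j).$$
The second component of $\w$ alone is precisely the filtered-bundle structure of $\sJ^k(\cF_{\{l\}})\to\cF_{\{l\}}$ from Example~\ref{exp:jetbundles}, which gives degree $k$ over $\cF_{\{l\}}$; the first component alone will yield the filtered-bundle structure of degree $l$ over $M$, once the transition functions are shown to have the bi-weighted polynomial form \eqref{eqn:transDouble} of bi-degree $(l,k)$.

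The central observation concerns the total derivative
$$D_b \;=\; \frac{\partial}{\partial x^b} \;+\; \sum X^I_{w, b\alpha}\, \frac{\partial}{\partial X^I_{w,\alpha}},$$
where the sum runs over $I,w$ and multi-indices $\alpha$ with $|\alpha|<k$. The differentiation $\partial/\partial X^I_{w,\alpha}$ lowers bi-weight by $(w,|\alpha|)$, while multiplication by $X^I_{w,b\alpha}$ raises it by $(w,|\alpha|+1)$, so each summand shifts bi-weight by $(0,1)$; the piece $\partial_{x^b}$ changes neither weight. Hence $D_b$ preserves the first weight and raises the second by at most one, and sends polynomial functions of bi-weight $\leq (u,v)$ to ones of bi-weight $\leq (u,v+1)$.

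The transition law for $X^{I'}_{w, a'_1\cdots a'_j}$ is obtained by applying the chain rule $j$ times to the fibre-coordinate transformation $X^{I'}_w = P^{I'}_w(x,X)$, producing a combination of iterated total derivatives $D_{b_1}\cdots D_{b_m}\,X^{I'}_w$ with $m\leq j$, multiplied by coefficients that are polynomial in the higher-order base-Jacobian derivatives $\partial^r x/\partial (x')^r$, all of bi-weight $(0,0)$. Since $X^{I'}_w$ has bi-weight $\leq (w,0)$, the outcome has bi-weight $\leq (w,j)$, which matches the assigned bi-weight of $X^{I'}_{w, a'_1\cdots a'_j}$ and is of the form required by \eqref{eqn:transDouble}. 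This establishes the canonical double filtered bundle structure of bi-degree $(l,k)$; its first filtered structure is that over $M$ of degree $l$ (obtained by gathering the coordinates of nonzero first-weight into the fibre), and its second is the filtered-bundle structure over $\cF_{\{l\}}$ of degree $k$ given by Example~\ref{exp:jetbundles}. The main subtlety is tracking two independent weights under iterated differentiation, which the uniform $(0,1)$-weight-shift property of $D_b$ resolves in a single stroke.
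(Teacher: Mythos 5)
Your proposal is correct and follows essentially the same route as the paper: a direct verification in adapted local coordinates that the jet-coordinate transition functions respect the bi-weight, with the first weight inherited from $\cF_{\{l\}}$ and the second given by the jet order. Your packaging of the computation through the uniform $(0,1)$-shift of the total derivative $D_b$ is a slightly cleaner organizing device than the paper's explicit Leibniz-rule computation for the first jet followed by the general multi-index formula, but the underlying argument is the same.
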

\begin{proof}
Let us introduce coordinates $(x^a, Y^I_w)$ adapted to the structure of filtered bundle in $\cF_{\{l\}}$. It means that the admissible coordinate transformations are of the form
\begin{eqnarray*}
x^{a'} &=& x^{a'}(x), \\
Y_{w}^{I'} &=&  \sum_{w_{1} + \cdots + w_{n}\leq w} \frac{1}{n!} Y^{J_{1}}_{w_{1}} Y^{J_{2}}_{w_{2}} \cdots Y^{J_{n}}_{w_{n}}T_{J_{n} \cdots J_{2} J_{1}}^{\hspace{30pt} I'}(x). \\
\end{eqnarray*}
The most straightforward way to prove that $\sJ^k(\cF_{\{l\}})$ is a double filtered bundle of bi-degree $(k,l)$ would be to just write down the transformation rules for jet coordinates and show explicitly that they respect the bi-weight. For the first jet it  follows from the Leibniz rule for the partial derivative $Y_w^I\mapsto Y^I_{w,a}$,
\begin{eqnarray*}
Y^{I'}_{w; a'} &=&\sum_{w_{1} + \cdots + w_{n}\leq w} \frac{1}{n!} Y^{J_{1}}_{w_{1}} Y^{J_{2}}_{w_{2}} \cdots Y^{J_{n}}_{w_{n}}
\frac{T_{J_{n} \cdots J_{2} J_{1}}^{\hspace{30pt} I'}}{\partial x^b}\frac{\partial x^b}{\partial x^{a'}} \\
&+& \sum_{w_{1} + \cdots + w_{n}\leq w} \frac{1}{(n-1)!} Y^{J_{1}}_{w_{1};c} Y^{J_{2}}_{w_{2}} \cdots Y^{J_{n}}_{w_{n}}
T_{J_{n} \cdots J_{2} J_{1}}^{\hspace{30pt} I'}\frac{\partial x^c}{\partial x^{a'}}\,.
\end{eqnarray*}
We can see in the above equation that both weights are conserved, the first weight is at most $w$ in both components while the jet weight is zero in first and one in the second component.\par
For higher jets the transformation rules become long, therefore we have to change the notation. Jet coordinates will be denoted by $Y^I_{w;\beta}$ where $\beta$ is the multi-index of length $m=\dim M$. The order of the jet equals $|\beta|=\beta^1+\cdots\beta^m$ and each $\beta^a$ denotes the number of derivatives with respect to $x^a$. With this notation the transformation rule for jet coordinates will be the following
$$
Y^{I'}_{w; \beta'}=\sum_{w_{1} + \cdots + w_{n}\leq w}
\sum_{\begin{array}{c}\scriptstyle\alpha_1+\cdots+\alpha_n=\alpha,\\ \scriptstyle |\alpha|\leq|\beta'|\end{array}}
Y^{J_{1}}_{w_{1};\alpha_1} Y^{J_{2}}_{w_{2};\alpha_2} \cdots Y^{J_{n}}_{w_{n};\alpha_n}\;S_{J_{n} \cdots J_{2} J_{1};\beta'}^{ I'\alpha_1\cdots\alpha_n}(x).
$$
The condition $|\alpha|\leq |\beta'|$ means that the jet weight is conserved, while the condition $w_{1} + \cdots + w_{n}\leq w$ assures that the first weight is conserved.
\end{proof}
The operation of jet prolongation may be applied also to morphisms with usual limitations that have nothing to do with the filtered structure: one can apply jet prolongation when it can be assured that a morphism maps sections into sections, i.e. when the induced morphism of base manifolds is a diffeomorphism.
\begin{theorem}
Let $\phi:\mathcal{F}_{\{l\}}\rightarrow \mathcal{F}'_{\{m\}}$ be a morphism of filtered bundles  covering a diffeomorphism on the bases, then its jet prolongation $\sJ^k\phi$ is a morphism of double filtered bundles
$\sJ^k\mathcal{F}_{\{l\}}$ and $\sJ^k\mathcal{F}'_{\{m\}}$ of bidegrees $(l,k)$ and $(m,k)$ respectively.
\end{theorem}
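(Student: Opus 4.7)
The plan is to reduce the statement to a coordinate calculation that closely mirrors the one used in the preceding theorem for coordinate changes, exploiting the fact that the transition functions of a filtered bundle and the coordinate expression of a filtered morphism have essentially the same form.

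First, I would fix adapted coordinates $(x^a, Y^I_w)$ on $\mathcal{F}_{\{l\}}$ and $(\bar x^i, \bar Y^P_u)$ on $\mathcal{F}'_{\{m\}}$. Since $\phi$ is a morphism of filtered bundles covering a diffeomorphism of bases, its coordinate expression is
\begin{eqnarray*}
\bar x^i\circ\phi(x) &=& \phi^i(x),\\
\bar Y^P_u\circ\phi(x,Y) &=& \sum_{w_1+\cdots+w_n\leq u}\frac{1}{n!}\,Y^{J_1}_{w_1}\cdots Y^{J_n}_{w_n}\,\Phi^{P}_{J_n\cdots J_1}(x),
\end{eqnarray*}
with $\phi^i$ invertible. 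Because $(\phi^i)$ is a diffeomorphism, the jet prolongation $\sJ^k\phi$ is well defined (sections pull back to sections along $\phi$, since $\phi$ gives a bijection on base points), and it automatically commutes with the canonical projections $\tau^k_{k-1}$ on both sides.

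The next step is to compute $\sJ^k\phi$ in the multi-index notation already set up in the previous theorem, writing jet coordinates as $Y^I_{w;\alpha}$ and $\bar Y^P_{u;\beta}$. Applying iterated total derivatives (Leibniz rule together with the chain rule for $\phi^i$) to the expression for $\bar Y^P_u\circ\phi$ gives a finite sum of the shape
\begin{equation*}
\bar Y^P_{u;\beta}\circ\sJ^k\phi \;=\; \sum_{w_1+\cdots+w_n\leq u}\;\sum_{\substack{\alpha_1+\cdots+\alpha_n=\alpha\\ |\alpha|\leq|\beta|}} Y^{J_1}_{w_1;\alpha_1}\cdots Y^{J_n}_{w_n;\alpha_n}\,\Sigma^{P\,\alpha_1\cdots\alpha_n}_{J_n\cdots J_1;\beta}(x),
\end{equation*}
where the coefficients $\Sigma$ are built from partial derivatives of $\Phi^P_{J_n\cdots J_1}$ and of the diffeomorphism $\phi^i$ on the base. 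The essential observation is that the Leibniz rule distributes one $x$-derivative among the factors $Y^{J_i}_{w_i}$ (raising one of the multi-indices $\alpha_i$ by one unit) or applies it to the $x$-dependent coefficient, so the total jet order of the monomial on the right hand side is at most $|\beta|$, and the filtered weights $w_i$ are left unchanged because $\Phi$ depends only on $x$.

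From this coordinate description both weights are manifestly respected: the first (filtered) weight is bounded by $u$ exactly as in the morphism formula \eqref{eqn:filtered morphisms}, while the second (jet) weight is bounded by $|\beta|$ exactly as in the coordinate change formula of the previous theorem. Hence $\sJ^k\phi$ is a morphism of double filtered bundles of bi-degrees $(l,k)$ and $(m,k)$. The only real obstacle is bookkeeping: organising the combinatorics of multi-indices so that one sees cleanly that no derivative can raise a filtered weight, and no term of jet order $>|\beta|$ can appear. Once the expansion above is written down, this is immediate, and functoriality $\sJ^k(\psi\circ\phi)=\sJ^k\psi\circ\sJ^k\phi$ and $\sJ^k(\mathrm{id})=\mathrm{id}$ is inherited from the usual functorial properties of jet prolongation.
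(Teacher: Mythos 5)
Your proposal is correct and follows essentially the same route as the paper: write the morphism in adapted coordinates, apply iterated total derivatives to obtain the jet-coordinate expression with coefficients built from derivatives of the $\Phi$'s and of the base diffeomorphism, and observe that both the filtered weight and the jet weight are manifestly conserved. The only cosmetic difference is that the paper first treats the case of a morphism over the identity before passing to a general base diffeomorphism, whereas you handle the general case directly; the substance of the argument is the same.
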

\begin{proof}
We start with the special case of a morphism over the identity on a base manifolds. It means in particular that both filtered bundles are over the same base manifold. The filtered property  is clear from the coordinate expression. Let $\phi: \mathcal{F}_{\{l\}}\rightarrow \mathcal{F}'_{\{m\}}$ be a morphism of filtered bundles over the identity. Using coordinates $(x^a, X^I_w)$ in $\mathcal{F}_{\{l\}}$ and $(x^a, Y^P_u)$ in $\mathcal{F}'_{\{m\}}$ we can express $\phi$ locally
$$
Y^P_u\circ\phi =\sum_{w_{1} + \cdots + w_{n}\leq u}\frac{1}{n!} X^{J_{1}}_{w_{1}} X^{J_{2}}_{w_{2}} \cdots X^{J_{n}}_{w_{n}}\Phi_{J_{n} \cdots J_{2} J_{1}}^{\hspace{30pt} P}(x)\,.  \\
$$
For jet coordinates of order $r\leq k$ we get the expression
\begin{multline*}
Y^P_{(u;\alpha)}\circ\sJ^k\phi=
\sum_{w_{1} + \cdots + w_{n}\leq u}\frac{1}{n!}
\sum_{\beta_1+\cdots+\beta_n+\beta_{n+1}=\alpha}
X^{J_{1}}_{(w_{1};\beta_1)} X^{J_{2}}_{(w_{2};\beta_1)} \cdots X^{J_{n}}_{(w_{n};\beta_n)}
\frac{\partial^{|\beta_{n+1}|}\Phi_{J_{n} \cdots J_{2} J_{1}}^{\hspace{30pt} P}(x)}{\partial x^{\beta_{n+1}}}\,.
\end{multline*}
Both weights in the above expressions are conserved.\par
For more general case of a morphism over a diffeomorphism we shall have local expressions for $\phi$ (using base cordinates $(x^a)$ in $M$ and $(y^b)$ in $M'$)
\begin{eqnarray*}
  y^a\circ\phi &=& \phi^a(x)\,, \\
  Y^P_u\circ\phi &=&\sum_{w_{1} + \cdots + w_{n}\leq u}\frac{1}{n!} X^{J_{1}}_{w_{1}} X^{J_{2}}_{w_{2}} \cdots X^{J_{n}}_{w_{n}}\Phi_{J_{n} \cdots J_{2} J_{1}}^{\hspace{30pt} P}(x)\,.
\end{eqnarray*}
The jet coordinates transform in more complicated way
$$
Y^{P}_{(u; \beta)}=\sum_{w_{1} + \cdots + w_{n}\leq u}
\sum_{\begin{array}{c}\scriptstyle\alpha_1+\cdots+\alpha_n=\alpha,\\ \scriptstyle |\alpha|\leq|\beta|\end{array}}
X^{J_{1}}_{(w_{1};\alpha_1)} X^{J_{2}}_{(w_{2};\alpha_2)} \cdots X^{J_{n}}_{(w_{n};\alpha_n)}\;\Psi_{J_{n} \cdots J_{2} J_{1};\beta}^{ P\alpha_1\cdots\alpha_n}(x)\,,
$$
where $\Psi_{J_{n} \cdots J_{2} J_{1};\beta}^{ P\alpha_1\cdots\alpha_n}(x)$ is an appropriate sum of derivatives of functions $\Phi_{J_{n} \cdots J_{2} J_{1}}^{\hspace{30pt} P}(x)$
multiplied by matrix elements of the derivative of the inverse of base diffeomorphism. Again it is clear from the local expressions that both weights are conserved.
\end{proof}
From the general theory of jet bundles we know that jet prolongations of bundle morphism satisfy the conditions $\sJ^k(\phi\circ\psi)=\sJ^k(\phi)\circ \sJ^k(\psi)$ and $\sJ^k(\mathrm{id_{\mathcal{F}}})=\mathrm{id}_{\sJ^k\mathcal{F}}$.
\begin{example}
The first jet bundle of an affine bundle, $\sJ^1(\mathcal{F}_{\{1\}})$, naturally comes with the structure of a double filtered bundle.  More specifically, we can employ (standard) local coordinates
$$(x^{a},~ Y^{\alpha}, ~Y_{b}^{\beta}),$$
which we can assign bi-weight $(0,0),~ (1,0)$ and $(1,1)$ respectively. The admissible changes of coordinates are
\begin{align*}
& x^{a'} = x^{a'}(x),    \hspace{130pt} Y^{\alpha'} = Y^{\beta}T_{\beta}^{\:\:\: \alpha'}(x) + T^{\alpha'}(x), &\\
& Y^{\alpha'}_{a'} = \left(\frac{\partial x^{b}}{\partial x^{a'}}\right) Y_{b}^{\beta} T_{\beta}^{\:\:\: \alpha'}(x) +  \left(\frac{\partial x^{b}}{\partial x^{a'}}\right) Y^{\beta} \frac{\partial T_{\beta}^{\:\:\: \alpha'}}{\partial x^{b}}(x) +  \left(\frac{\partial x^{b}}{\partial x^{a'}}\right) \frac{\partial T^{ \alpha'}}{\partial x^{b}}(x).
&
\end{align*}
Clearly, these coordinate transformations are filtered.
\end{example}
\begin{example}
Let $E \rightarrow M$ be a fibre bundle in the category of manifolds. The the r-th order jets of sections $\sJ^{r}(\sJ^{k}E)$ of the jet bundle $\sJ^{k} E\rightarrow M$ naturally comes with the structure of a double filtered bundle of bi-degree $(k,r)$. More specifically,  $\sJ^1(\sJ^1 E)$ is a double filtered bundle of bi-degree $(1,1)$ and can be equipped with natural local coordinates
$$ (\underbrace{x^{a}}_{(0,0)},~ \underbrace{y^{\alpha}}_{(0,0)},~ \underbrace{X_{b}^{\beta}}_{(1,0)},~ \underbrace{Y_{c}^{\gamma}}_{(0,1)} , ~ \underbrace{Z_{de}^{\delta}}_{(1,1)} ).$$
The admissible changes of coordinates are of the form
\begin{align*}
& x^{a'} = x^{a'}(x),\\
& y^{\alpha'} = y^{\alpha'}(x,y),\\
& X^{\beta'}_{b'} = \left(  \frac{\partial x^{c}}{\partial x^{b'}}\right)\left(\frac{\partial}{\partial x^{c}} + X_{c}^{\gamma} \frac{\partial}{\partial y^{\gamma}}\right)y^{\beta'}(x,y),\\
& Y^{\gamma'}_{c'} = \left(  \frac{\partial x^{d}}{\partial x^{c'}}\right)\left(\frac{\partial}{\partial x^{d}} + Y_{d}^{\delta} \frac{\partial}{\partial y^{\delta}}\right)y^{\gamma'}(x,y),\\
& Z^{\delta'}_{d'e'} = \left(  \frac{\partial x^{c}}{\partial x^{d'}}\right)\left(\frac{\partial}{\partial x^{c}} + Y_{c}^{\gamma} \frac{\partial}{\partial y^{\gamma}} + Z^{\gamma}_{bc} \frac{\partial}{\partial X_{b}^{\gamma}}\right)X^{\delta'}_{e'}(x,y,X),\\
\end{align*}
which by quick inspection are double filtered.
\end{example}
\begin{example}
Given any filtered bundle the jet manifold $\sJ_{0}(\mathbb{R}^{p} , \mathcal{F}_{\{k\}})$ is canonically a double filtered bundle. More specifically, we can employ naturally induced coordinates
$$(x^{a},~ X^{I}_{(w,0)},~ x^{b}_{(0,\lambda)}, ~ X^{J}_{(w,\lambda)}),$$
where $0 < w \leq k$ and $0 < \lambda \leq p$. The transformation laws for the coordinates follows from example (\ref{exp:pplane}), specifically we have
\begin{eqnarray*}
X^{I'}_{(w,\lambda)} &=& \sum_{w_{1} + \cdots + w_{n}\leq w} \frac{1}{(n-1)!}  X^{J_{1}}_{(w_{1},\lambda)} X^{J_{2}}_{(w_{2},0)} \cdots X^{J_{n}}_{(w_{n},0)}T_{J_{n} \cdots J_{2} J_{1}}^{\hspace{30pt} I'}\\
 &+& \sum_{w_{1} + \cdots + w_{n}\leq w} \frac{1}{n!} X^{J_{1}}_{(w_{1},0)} X^{J_{2}}_{(w_{2},0)} \cdots X^{J_{n}}_{(w_{n},0)}   x^{a}_{(\lambda,0)} \frac{\partial }{\partial x^{a}}T_{J_{n} \cdots J_{2} J_{1}}^{\hspace{30pt} I'}.
\end{eqnarray*}
Clearly we have a double filtered bundle structure, albeit one of the structures is split. The choice of coordinates here induces the obvious (partial) splitting
$$\sJ_{0}(\mathbb{R}^{p} , \mathcal{F}_{\{k\}}) \simeq \bigoplus_{p}\sT \mathcal{F}_{\{k\}}.$$
\end{example}
The notion of  higher $n$-tuple filtered bundles is clear in terms of local coordinates. That is we have a polynomial bundle equipped with local coordinates that can simultaneously be assigned an $n$-tuple weight, and the changes of local coordinates are filtered with respect to each weight. As a matter of notation we will denote a general $n$-tuple filtered bundle as $\mathcal{F}_{\{k_{1}, k_{2}, \cdots , k_{n} \}}$ to indicate the  assignment of the multi-weight to the local coordinates.
\begin{example}
Double vector bundles (see \cite{Pradines:1974}) are examples of double filtered bundles. Similarly,  $n$-tuple vector bundles and $n$-tuple graded bundles are examples of $n$-tuple filtered bundles.
\end{example}
\begin{example}
Repeated jets of sections of a vector bundle  $\sJ^{k_1}{\sJ}^{k_2} \cdots {\sJ}^{k_n}(E)$ (cf. Remark \ref{re1}) naturally comes with the structure of an $n$-tuple filtered bundle.
\end{example}
\begin{example}
Repeated application of the linearisation functor to a filtered bundle $\mathcal{F}$ of degree $k$ leads to a $k$-tuple
vector bundle $\Linr(\mathcal{F})=\pLinr(\pLinr(\cdots\pLinr(\mathcal{F})\cdots))$, which we can view as a $k$-tuple filtered bundle.  In this way, we obtain a \emph{total linearisation (polarisation) functor} as in \cite{Bruce:2016}.
\end{example}
\begin{proposition}
There is an equivalence  between the category of $n$-tuple filtered bundles of the form $\mathcal{F}_{\{1,1,\cdots, 1 \}}$ and  the category of $n$-affine bundles (cf. \cite{Grabowski:2010}). In particular, filtered bundles of the form $\mathcal{F}_{\{1,1 \}}$ are precisely double affine bundles.
\end{proposition}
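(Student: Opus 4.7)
The plan is to construct inverse functors explicitly at the level of adapted local coordinates, and then argue that the required compatibility condition on both sides is captured by the same data. In essence the proposition will follow from unpacking the definitions and comparing the transformation rules side by side.

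First, I would analyze the local structure of an $n$-tuple filtered bundle of multi-degree $(1,1,\dots,1)$. By definition of such a structure, adapted fibre coordinates carry a weight $\underline{\ep}=(\ep_1,\dots,\ep_n)\in\{0,1\}^n\setminus\{(0,\dots,0)\}$, and the changes of fibre coordinates are polynomial and filtered of degree $\le 1$ in \emph{each} of the $n$ weights separately. Since each weight can only take the value $0$ or $1$, being filtered of degree $\le 1$ in the $i$-th weight forces the transformation law to be affine in the coordinates of $i$-th weight $1$ with coefficients depending only on coordinates whose $i$-th weight is $0$. Thus for every $i=1,\dots,n$, the distinguished projection that kills all coordinates with $i$-th weight equal to $1$ gives an affine bundle structure on $\mathcal{F}$, and these $n$ affine structures satisfy the standard compatibility (the local trivialisations are simultaneously adapted to all of them) which is exactly the axiom for an $n$-affine bundle in the sense of \cite{Grabowski:2010}.

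Next, I would construct the reverse functor. Given an $n$-affine bundle, one can pick atlases simultaneously adapted to the $n$ affine structures, so that to each fibre coordinate $Z$ there is associated a subset $S(Z)\subseteq\{1,\dots,n\}$ of indices for which $Z$ is a fibre coordinate of the $i$-th affine structure (that is, it is not a base coordinate for the $i$-th structure). Assigning to $Z$ the multi-weight $\underline{\ep}(Z)$ with $\ep_i=1$ if and only if $i\in S(Z)$ produces a multi-weight in $\{0,1\}^n$; the compatibility of the $n$ affine structures  translates directly into the filtered transformation law $(\ref{eqn:transDouble})$ generalised to $n$ indices, with each individual weight respected modulo terms of lower weight. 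This gives a filtered bundle of multi-degree $(1,\dots,1)$.

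For morphisms, a morphism of $n$-tuple filtered bundles of type $(1,\dots,1)$ must by definition be polynomial in the fibre coordinates and respect each of the $n$ filtrations; applied to a monomial whose multi-weight has $\ep_i=1$ for some $i$, the image can contain only monomials of multi-weight $\le\underline{\ep}$, which is exactly the requirement that the morphism be affine in each of the $n$ affine directions. Conversely, an $n$-affine morphism manifestly respects each of the $n$ weights, so we obtain a functorial bijection on hom-sets. Combined with the object-level correspondence above this yields the desired equivalence of categories, and in the case $n=2$ we recover precisely the notion of double affine bundle as in \cite{Grabowski:2010}.

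The only real obstacle is bookkeeping: the compatibility of the $n$ affine structures must match the multi-filtered transformation laws term by term, and one should verify that the construction is independent of the choice of adapted atlas (i.e.\ that the multi-weight assignment is globally well defined). Both points reduce to the observation that in weights $(1,\dots,1)$ every polynomial of multi-degree $\le(1,\dots,1)$ is affine in each argument, so no genuinely new phenomena beyond the pure affine case appear.
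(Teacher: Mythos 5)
Your proposal is correct and follows essentially the same route as the paper: both arguments reduce to comparing the adapted local coordinate transformation laws of an $n$-tuple filtered bundle of multi-degree $(1,\dots,1)$ with those characterising ($n$-)affine bundles, and then noting the corresponding match on morphisms. The paper simply carries this out explicitly only for $n=2$, citing the coordinate characterisation of double affine bundles from the reference, whereas you phrase the general $n$ case and the inverse functor slightly more abstractly; the content is the same.
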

\begin{proof}
Let us just concentrate on the $(1,1)$ case as this is illustrative enough to establish the general case. Let us employ local coordinates $(x^{a},~ Y^{I}_{(1,0)},~ Z^{\Sigma}_{(0,1)}, ~ W^{A}_{(1,1)})$ on a general double filtered bundle $\mathcal{F}_{\{1,1 \}}$. The admissible changes of local coordinates are of the form
\begin{align*}
& x^{a'} = x^{a'}(x),\\
& Y^{I'}_{(1,0)} = Y^{J}_{(1,0)}T_{J}^{\:\:\: I'}(x) + T^{I'}(x),\\
& Z^{\Sigma'}_{(0,1)} = Z^{\Upsilon}_{(0,1)}T_{\Upsilon}^{\:\:\: \Sigma'}(x) + T^{\Sigma'}(x),\\
& W^{A'}_{(1,1)} = W^{B}_{(1,1)}T_{B}^{\:\:\: A'}(x) + Y^{I}_{(1,0)}Z^{\Sigma}_{(0,1)}T_{\Sigma I}^{\:\:\:\: A'}(x) + Y^{J}_{(1,0)}T_{J}^{\:\:\: A'}(x) + Z^{\Sigma}_{(0,1)}T_{\Sigma}^{\:\:\: A'}(x) + T^{A'}(x),
\end{align*}
which in accordance with \cite[Theorem 2.1]{Grabowski:2010} provide an adapted coordinate system on a double affine bundle. Morphisms of double affine bundles are `affine' and thus in one-to-one correspondence with respective morphisms of double filtered bundles.
\end{proof}
\begin{remark}
An observation here is that, following example   $\sJ^1\mathcal{F}_{\{1\}}$ is a double affine bundle.
\end{remark}
As a matter of formality, lett us denote the category of $n$-tuple filtered bundles by $\catname{FilB}^n$. More specifically, when each filtered bundle structure is of degree $k_i$ ($1 \leq i \leq n$) we will denote the corresponding full subcategory as $\catname{FilB}^n[k_1, k_2, \cdots , k_n]$. Morphisms in these categories are morphisms as polynomial bundles that preserve the filtered structures, in local coordinates this meaning is clear. \par
By passing to a total weight, i.e., sum the individual weights,  any $n$-tuple filtered bundle can be considered as a filtered bundle -- coordinate changes that respect the multi-weight also respect the total weight. It is easy to see that this process is functorial: again anything that respects the multi-weight also respect the total weight.  These considerations lead to the following observation.
\begin{proposition}
There is a functor
$$\textnormal{Totw}: \catname{FilB}^n[k_1, k_2, \cdots , k_n] \longrightarrow \catname{FilB}[k_1+k_2 + \cdots + k_n],$$
that is constructed by passing from the assignment of  a muilt-weight to  the assignment of a total weight of the local coordinates in any adapted atlas.
\end{proposition}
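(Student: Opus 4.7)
The plan is to construct the functor $\textnormal{Totw}$ explicitly in local coordinates and verify the axioms directly; the only substantive check is that passage to a total weight preserves the filtered polynomial structure both on objects and on morphisms.

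First I would define $\textnormal{Totw}$ on objects. Let $\mathcal{F}_{\{k_1,\dots,k_n\}}$ carry an adapted atlas with fibre coordinates $Z^A_{(w_1,\dots,w_n)}$, with $0\le w_j\le k_j$ and $(w_1,\dots,w_n)\ne(0,\dots,0)$. Keep the underlying polynomial bundle unchanged, but assign to each fibre coordinate the single weight $w := w_1 + \cdots + w_n \in \mathbb{N}$, with the base coordinates still of weight zero. The admissible coordinate changes in the multi-filtered atlas have the form \eqref{eqn:transDouble} (extended to $n$ weights), meaning that a monomial $Z^{B_1}_{(w_{1,1},\dots,w_{1,n})}\cdots Z^{B_m}_{(w_{m,1},\dots,w_{m,n})}$ appears in the expression for $Z^{A'}_{(u_1,\dots,u_n)}$ only if $\sum_{i=1}^m w_{i,j}\le u_j$ for each $j=1,\dots,n$. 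Summing these $n$ inequalities yields $\sum_{i=1}^m (w_{i,1}+\cdots+w_{i,n}) \le u_1+\cdots+u_n$, which is exactly the filtered condition in the total weight. Hence $\textnormal{Totw}(\mathcal{F}_{\{k_1,\dots,k_n\}})$ is a filtered bundle whose degree is bounded by $k_1+\cdots+k_n$.

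Next I would define $\textnormal{Totw}$ on morphisms. A morphism $\phi:\mathcal{F}_{\{k_1,\dots,k_n\}}\to\mathcal{G}_{\{l_1,\dots,l_n\}}$ is, by definition, a polynomial bundle map whose local expression (analogous to \eqref{eqn:filtered morphisms}) respects each of the $n$ weights componentwise. The same telescoping used above shows that every monomial appearing in $\phi^*Z^{A}_{(u_1,\dots,u_n)}$ has total weight at most $u_1+\cdots+u_n$, so $\phi$ is automatically a morphism of filtered bundles for the respective total weights. Functoriality is then immediate because $\textnormal{Totw}$ leaves the underlying smooth maps untouched: identities go to identities, and $\textnormal{Totw}(\phi\circ\psi)=\textnormal{Totw}(\phi)\circ\textnormal{Totw}(\psi)$ holds as an equality of smooth maps between the same underlying manifolds.

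I do not expect any serious obstacle here; the whole argument is a telescoping of $n$ inequalities. The only subtlety worth flagging is well-definedness of the total weight assignment across different adapted atlases. This, however, follows from the observation that compositions and inverses of admissible $n$-tuple filtered transition functions remain filtered in each individual weight, hence filtered in the total weight, so any two adapted atlases on the same $n$-tuple filtered bundle produce the same filtered structure after the weights are collapsed.
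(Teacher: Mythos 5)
Your proposal is correct and follows essentially the same route as the paper, which justifies the proposition by the one-line observation that coordinate changes (and morphisms) respecting the multi-weight automatically respect the total weight, functoriality being immediate since the underlying maps are unchanged. Your telescoping of the $n$ componentwise inequalities is just a more explicit rendering of that same argument.
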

 For example, any double affine bundle can be considered as a filtered bundle of degree $2$. A similar observation can be made of a double vector bundle, which can always be considered as a graded bundle of degree $2$.  


\vskip1cm

\noindent Andrew James Bruce\\
\emph{Mathematics Research Unit, University of Luxembourg,}\\ {\small Maison du Nombre 6, avenue de la Fonte,
L-4364, Esch-sur-Alzette,  Luxembourg}\\ {\tt andrewjamesbruce@googlemail.com}\\

\noindent Katarzyna Grabowska\\
\emph{Faculty of Physics,
                University of Warsaw} \\
               {\small Pasteura 5, 02-093 Warszawa, Poland} \\
                 {\tt konieczn@fuw.edu.pl} \\

\noindent Janusz Grabowski\\\emph{Institute of Mathematics, Polish Academy of Sciences}\\{\small \'Sniadeckich 8, 00-656 Warszawa,
Poland}\\{\tt jagrab@impan.pl}

\end{document}